\newtheorem{thm}{Theorem}[section]
\newtheorem{cor}[thm]{Corollary}
\newtheorem{lem}[thm]{Lemma}
\newtheorem{prop}[thm]{Proposition}
\newtheorem{defn}[thm]{Definition}
\theoremstyle{remark}
\newtheorem{rem}{Remark}[section]
\newcommand{\mR}{\mathbb{R}}
\newcommand{\mN}{\mathbb{N}}
\newcommand{\mE}{\mathbb{E}}
\newcommand{\mS}{\mathbb{S}}
\newcommand{\mP}{\mathbb{P}}
\newcommand{\cM}{\mathcal{M}}
\newcommand{\cH}{\mathcal{H}}
\newcommand{\cF}{\mathcal{F}}
\newcommand{\cP}{\mathcal{P}}
\newcommand{\cC}{\mathcal{C}}
\newcommand{\cS}{\mathcal{S}}
\newcommand{\cG}{\mathcal{G}}
\newcommand{\ux}{\underline{x}}
\newcommand{\uy}{\underline{y}}
\newcommand{\upx}{\partial_{\underline{x}}}
\newcommand{\upy}{\partial_{\underline{y}}}
\newcommand{\pI}{\partial_{x_i}}
 \def\a{{\alpha}} 
 \def\b{{\beta}}
 \def\g{{\gamma}}
 \def\t{{\theta}}
 \def\l{{\lambda}}
 \def\s{{\sigma}}
 \def\la{{\langle}}
 \def\ra{{\rangle}}
 \def\CF{{\mathcal F}}
 \def\NN{{\mathbb N}}
 \def\RR{{\mathbb R}}
\newcommand{\wt}{\widetilde}
\newcommand{\wh}{\widehat}
\begin{document}
 
\title
{On the Clifford-Fourier transform}
\author{Hendrik De Bie}
\address{Department of Mathematical Analysis\\
Ghent University\\ Krijgslaan 281, 9000 Gent\\ Belgium.}
\email{Hendrik.DeBie@UGent.be}
\author{Yuan Xu}
\address{Department of Mathematics\\ University of Oregon\\
    Eugene, Oregon 97403-1222.}\email{yuan@math.uoregon.edu}

\date{\today}
\keywords{Clifford-Fourier transform, integral kernel, translation operator, convolution structure}
\subjclass{30G35, 42B10} 
\thanks{H. De Bie is a Postdoctoral Fellow of the Research Foundation - Flanders (FWO) and Courtesy Research Associate at the University of Oregon.} 

\begin{abstract}
For functions that take values in the Clifford algebra, we study the Clifford-Fourier transform on $\RR^m$ defined with a kernel
function $K(x,y) :=  e^{ \frac{i \pi }{2} \Gamma_{\uy} }e^{-i \langle \ux,\uy\rangle}$, replacing the kernel $e^{i  \langle \ux,\uy\rangle}$ of the ordinary Fourier transform, where $\Gamma_{\uy} := - \sum_{j<k} e_{j}e_{k} (y_{j} \partial_{y_{k}} - y_{k}\partial_{y_{j}})$. An explicit formula of $K(x,y)$ is derived, which can be further simplified to a finite sum of Bessel functions when $m$ is even. The closed formula of the kernel allows us to study the Clifford-Fourier transform and prove the inversion formula, for which a generalized translation operator and a convolution are defined and used. 
\end{abstract}

\maketitle

\tableofcontents

\section{Introduction}
\setcounter{equation}{0}

In harmonic analysis in, say, $\mR^{m}$, a profound role is played by the Lie algebra $\mathfrak{sl}_{2}$ generated by the Laplace operator $\Delta$ and the norm squared of a vector $|\ux|^{2}$ (see e.g. \cite{MR1151617}). As an example, the classical Fourier transform given by
\begin{equation}
\cF(f)(y) = (2 \pi)^{-\frac{m}{2}} \int_{\mR^{m}} e^{-i \la \ux, \uy \ra} f(x) dx, \quad \langle \ux,\uy \rangle = \sum_{i=1}^{m}x_{i} y_{i}
\label{classFourier}
\end{equation}
can be equivalently represented by the operator exponential that contains the generators of $\mathfrak{sl}_{2}$, 
\begin{equation}
\cF = e^{ \frac{i \pi m}{4}} e^{\frac{i \pi}{4}(\Delta - |\ux|^{2})},
\label{expclassFourier}
\end{equation}
where the equivalence means that the two operators have the same eigenfunctions and eigenvalues. 
 
Similar results exist in the theory of Dunkl operators. These operators (see \cite{MR951883, MR1827871}) are a set of 
differential-difference operators of first order that generate a commutative algebra. The Dunkl Laplacian $\Delta_{k}$, playing the 
role similar to that of the ordinary Laplacian, is a second order operator in the center of the algebra, invariant under a finite reflection group $\cG < O(m)$, instead of the entire rotation group $O(m)$. It can be proven that $\Delta_{k}$ together with $|\ux|^{2}$ again
generates $\mathfrak{sl}_{2}$ (see \cite{He}). The Dunkl transform was introduced in \cite{Du4} and further studied in \cite{deJ}. 
Although for general reflection groups, the kernel of this integral transform is not explicitly known, it can equivalently be expressed by a similar operator exponential expression as follows (see \cite{Said}, formula (3.19))
\[
\cF_{k}= e^{ \frac{i \pi \mu}{4}} e^{\frac{i \pi}{4}(\Delta_{k} - |\ux|^{2})} 
\]
with $\mu$ a constant related to the reflection group under consideration.

More recently, further generalizations of the classical Fourier transform and the Dunkl transform have been introduced. This was first done in the context of minimal representations (see e.g. \cite{KM1, KM2, KM3}) and subsequently generalized to so-called radial deformations (see e.g. \cite{Orsted2, H12}). Again these Fourier transforms are given by similar operator exponentials containing generators of $\mathfrak{sl}_{2}$. It should be remarked that it is in general a difficult question to obtain an explicit integral kernel for such operator exponentials. In \cite{KM3} the kernel is determined very explicitly for the case of a Fourier transform on an isotropic cone (associated to an indefinite quadratic form of signature $(p,q)$), but in \cite{Orsted2, H12} explicit expressions are only obtained for a few values of the deformation parameters and one is restricted to using series expansions for general parameter values.

Clifford analysis (see e.g. \cite{MR697564, MR1169463}) is a refinement of harmonic analysis in $\mR^{m}$, in the sense that the $\mathfrak{sl}_{2}$ algebra generated by the basic operators in harmonic analysis is refined to the Lie superalgebra $\mathfrak{osp}(1|2)$ (containing $\mathfrak{sl}_{2}$ as its even subalgebra). This is obtained by introducing the Dirac operator
\begin{equation}
\upx :=  \sum_{i=1}^{m} e_{i}\pI
\label{DiracOperator}
\end{equation}
and the vector variable
\begin{equation}
\ux := \sum_{i=1}^{m} e_{i} x_{i},
\label{VV}
\end{equation}
where $e_1, \ldots, e_m$ generate the Clifford algebra $\cC l_{0,m}$ with the relations $e_{i} e_{j} + e_{i} e_{j} = -2 \delta_{ij}$. These operators satisfy $\upx^{2} = - \Delta$ and $\ux^{2} = - |\ux|^{2}$. The other important relations will be given in Theorem \ref{ospFamily} below.

Several attempts have been made to introduce a generalization of the Fourier transform to the setting of Clifford analysis (see \cite{AIEP} for a review). From our point of view, only the so-called Clifford-Fourier transform introduced in \cite{MR2190678} is promising, because it is given by a similar operator exponential as the classical Fourier transform in \eqref{expclassFourier}, but now containing generators of $\mathfrak{osp}(1|2)$. More precisely, it is defined by
\begin{equation}\label{clifford-fourier}
\cF_{\pm} =  e^{ \frac{i \pi m}{4}} e^{\mp \frac{i \pi}{2}\Gamma  }e^{\frac{i \pi}{4}(\Delta - |\ux|^{2})} = e^{ \frac{i \pi m}{4}} e^{\frac{i \pi}{4}(\Delta - |\ux|^{2} \mp 2\Gamma)}
\end{equation}
with $2\Gamma= (\upx \ux - \ux \upx)+m$. Note that we use a slightly different normalization as 
in \cite{MR2190678}. The main problem concerning this transform is to write it as an integral transform 
\[
\cF_{\pm} (f)(y) = (2 \pi)^{-\frac{m}{2}} \int_{\mR^{m}} K_\pm(x,y) f(x) \, dx,
\]
where the kernel function $K_{\pm}(x,y)$ is given by 
$$
    K_\pm (x,y) = e^{\mp i \frac{\pi}{2} \Gamma_{\uy} } e^{-i \la \ux,\uy\ra}
$$
and to find an explicit expression for $K_\pm(x,y)$. So far the kernel is found explicitly only in the case $m=2$ (see \cite{MR2283868}). For higher even dimensions, a complicated iterative procedure for constructing the kernel is given in \cite{JMAAFourierBessel}, which can be used to explicitly compute the kernel only for low dimensions (say $m=4, 6$). At the moment, no results are known in the case of odd $m$. 

Our main result in this paper is the following. 
We will give a completely explicit description of the kernel in terms of a finite sum of Bessel functions when $m$ is even (see Theorem \ref{EvenExplicit}). In the case of odd $m$, we are able to show that it is enough to identify the kernel in dimension three, from which kernels in higher odd dimensions can be deduced by taking suitable derivatives (see Theorem \ref{KernelOdd}), and we are able to express the kernel in dimension three as a single integral of a combination of Bessel functions. 

The compact formula of the kernel shows that $K(x,y)$ is unbounded for even dimensions $m > 2$, in sharp contrast to the classical Fourier transform, and it yields a sharp bound for the kernel. This allows us to establish the inversion formula for the 
Clifford-Fourier transform. In the process, we define a generalized translation operator in terms of the Clifford-Fourier transform and 
a generalized convolution in terms of the translation operator. It turns out, rather surprisingly, that the generalized translation 
coincides with the ordinary translation when the function being translated is radial. These results indicate a possible theory of harmonic analysis for Clifford algebra akin to the classical harmonic analysis. 

The paper is organized as follows. In section \ref{prelim} we recall basic facts on Clifford algebras and Dirac operators necessary for the sequel. In section \ref{CFdev} we derive a series representation for kernel of the Clifford-Fourier transform that is valid for all dimensions. In section \ref{CFeven} we obtain the explicit expression for the kernel in even  dimensions. In section \ref{CFodd} we discuss the kernel in odd dimensions. In section \ref{secFurtherprops} we obtain some properties of the kernels and obtain the necessary bounds. In section \ref{Inversion}, we show that the Clifford-Fourier transform is a continuous operator on Schwartz class functions and show that it coincides with the exponential operator introduced in \cite{MR2190678}. Next, in section \ref{SecTranslation}, we define a translation operator related to the Clifford-Fourier transform and obtain the important result that this translation operator coincides with the classical translation for radial functions. Finally, in section \ref{SecConvolution} we introduce a convolution structure based on the translation operator, which allows us to prove the inversion theorem for a broader class of functions.

\section{Preliminaries}\label{prelim}
\setcounter{equation}{0}

The Clifford algebra $\cC l_{0,m}$ over $\mR^{m}$ is the algebra generated by $e_{i}$, $i= 1, \ldots, m$, under the relations
\begin{align} \label{eq:eij}
\begin{split}
&e_{i} e_{j} + e_{i} e_{j} = 0, \qquad i \neq j,\\
& e_{i}^{2} = -1.
\end{split}
\end{align}
This algebra has dimension $2^{m}$ as a vector space over $\mR$. It can be decomposed as $\cC l_{0,m} = \oplus_{k=0}^{m} \cC l_{0,m}^{k}$
with $\cC l_{0,m}^{k}$ the space of $k$-vectors defined by
\[
\cC l_{0,m}^{k} := \mbox{span} \{ e_{i_{1}} \ldots e_{i_{k}}, i_{1} < \ldots < i_{k} \}.
\]
In the sequel, we will always consider functions $f$ taking values in $\cC l_{0,m}$, unless explicitly mentioned. Such functions can be decomposed as
\begin{equation} \label{clifford_func}
f(x) = f_{0}(x) + \sum_{i=1}^{m} e_{i}f_{i}(x) + \sum_{i< j} e_{i} e_{j} f_{ij}(x) + \ldots + e_{1} \ldots e_{m} f_{1 \ldots m} (x)
\end{equation}
with $f_{0}, f_{i}, f_{ij}, \ldots, f_{1 \ldots m}$ all real-valued functions.

The Dirac operator (\ref{DiracOperator}) and the vector variable (\ref{VV}) together generate the Lie superalgebra $\mathfrak{osp}(1|2)$. This is the subject of the following theorem.

\begin{thm}
The operators $\upx$ and $\ux$ generate a Lie superalgebra, isomorphic with $\mathfrak{osp}(1|2)$, with the following relations
\begin{equation}
\begin{array}{lll}
\{ \ux, \ux \} = -2 |\ux|^{2} & \quad & \{ \upx, \upx \} = -2 \Delta\\
\vspace{-3mm}\\
\{ \ux, \upx\} = -2 \left( \mE + \frac{m}{2}\right)&\quad&\left[\mE + \frac{m}{2}, \upx \right] = - \upx\\
\vspace{-3mm}\\
\left[ |\ux|^{2}, \upx \right] = -2\ux&\quad&\left[\mE + \frac{m}{2}, \ux \right] =   \ux\\
\vspace{-3mm}\\
\left[ \Delta, \ux \right] = 2\upx&\quad&\left[\mE + \frac{m}{2}, \Delta \right] = - 2 \Delta\\
\vspace{-3mm}\\
\left[ \Delta^{2}, |\ux|^{2} \right] = 4 \left( \mE + \frac{m}{2} \right)&\quad&\left[\mE + \frac{m}{2}, |\ux|^{2} \right] = 2 |\ux|^{2},
\end{array}
\end{equation}
where $\mE = \sum_{i=1}^{m}x_{i}\partial_{x_{i}}$ is the Euler operator.
\label{ospFamily}
\end{thm}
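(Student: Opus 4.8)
The plan is to verify each relation by direct computation from the Clifford relations \eqref{eq:eij} together with the definitions of $\upx$, $\ux$, $\Delta$, $|\ux|^2$ and $\mE$, and then to read off the isomorphism with $\mathfrak{osp}(1|2)$ by matching generators and structure constants. The key organizing observation is that $\ux$ and $\upx$ are the two \emph{odd} generators, so that the graded bracket between them is the anticommutator $\{A,B\}=AB+BA$, whereas brackets involving the \emph{even} elements $|\ux|^2$, $\Delta$ and $\mE+\tfrac m2$ are ordinary commutators. The three anticommutators in the first column will produce, up to scalar factors, precisely the three even generators, which then close into a copy of $\mathfrak{sl}_2$.

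First I would treat the two ``pure'' anticommutators. Writing $\ux^2=\sum_{i,j}e_ie_j x_ix_j$ and splitting into diagonal and off-diagonal terms, the off-diagonal part is a sum over $i\neq j$ of $e_ie_j x_ix_j$, which is antisymmetric in $(i,j)$ through $e_ie_j=-e_je_i$ but symmetric through $x_ix_j$, hence cancels; the diagonal part gives $\sum_i e_i^2x_i^2=-|\ux|^2$, so $\{\ux,\ux\}=2\ux^2=-2|\ux|^2$. The identical computation with $\partial_{x_i}$ in place of $x_i$ gives $\upx^2=-\Delta$ and $\{\upx,\upx\}=-2\Delta$.

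The mixed anticommutator is the one place where the operator nature genuinely matters. Computing $\upx\,\ux$ as an operator, one must let $\partial_{x_i}$ act on the product $x_j f$, producing $\sum_{i,j}e_ie_j(\delta_{ij}+x_j\partial_{x_i})=-m+\sum_{i,j}e_ie_jx_j\partial_{x_i}$, whereas $\ux\,\upx=\sum_{i,j}e_ie_jx_i\partial_{x_j}$. Adding the two and relabelling indices, the derivative terms combine through $e_ie_j+e_je_i=-2\delta_{ij}$ into $-2\mE$, leaving $\{\ux,\upx\}=-2\mE-m=-2(\mE+\tfrac m2)$. This fixes the third even generator and, notably, the scalar shift $\tfrac m2$.

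It then remains to verify the even--even and even--odd commutators. The commutators with $\mE+\tfrac m2$ are immediate from $[\mE,\partial_{x_i}]=-\partial_{x_i}$ and $[\mE,x_i]=x_i$, which give the weight relations for $\upx$, $\ux$, $\Delta$ and $|\ux|^2$; the $\mathfrak{sl}_2$ relation $[\Delta,|\ux|^2]=4(\mE+\tfrac m2)$ follows from $[\Delta,|\ux|^2]f=2mf+4\mE f$ after acting on a test function (the entry $\Delta^2$ in the displayed array is a misprint for $\Delta$). The remaining mixed relations $[|\ux|^2,\upx]=-2\ux$ and $[\Delta,\ux]=2\upx$ can be checked directly, but they also follow automatically from the graded Jacobi identity once the anticommutators above are known, which is the cleanest way to close the algebra. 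The main point requiring care throughout is the bookkeeping of the $\mathbb{Z}_{2}$-grading---using anticommutators exactly for the odd--odd pairs and commutators otherwise---together with the correct operator action of $\upx$ on products, which is precisely what generates the scalar shift distinguishing $\mathfrak{osp}(1|2)$ from a naive bracket computation.
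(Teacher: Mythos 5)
Your computations are all correct, and the proof goes through. But note that the paper itself does not carry out this verification: its ``proof'' of Theorem \ref{ospFamily} is a citation, namely that the statement is the special case $a=2$, $b=c=k=0$ of Theorem 1 in \cite{H12} (with an earlier occurrence in Proposition 3.1 of \cite{HS}). So your route is genuinely different in character: you give a self-contained elementary verification from the Clifford relations \eqref{eq:eij}, correctly organizing the check around the $\mathbb{Z}_2$-grading (anticommutators for the odd pair $\ux$, $\upx$, commutators otherwise), and the only computation where the operator nature matters --- the appearance of the scalar shift $m/2$ from $\partial_{x_i}(x_j f)=\delta_{ij}f+x_j\partial_{x_i}f$ in $\{\ux,\upx\}$ --- is handled correctly. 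Your observation that $[|\ux|^2,\upx]=-2\ux$ and $[\Delta,\ux]=2\upx$ can alternatively be recovered from the graded Jacobi identity applied to $[\{\ux,\ux\},\upx]$ and $[\{\upx,\upx\},\ux]$ is also sound, and you are right that the entry $[\Delta^2,|\ux|^2]$ in the displayed array is a misprint for $[\Delta,|\ux|^2]$, since $[\Delta,|\ux|^2]f=2mf+4\mE f$ whereas $[\Delta^2,|\ux|^2]$ is not even of degree zero. What the paper's approach buys instead is context: Theorem 1 of \cite{H12} establishes a whole family of deformed realizations of $\mathfrak{osp}(1|2)$ (depending on parameters $a,b,c,k$), of which this is one member, so the citation situates the result in that broader framework at the cost of self-containedness. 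Your direct check is the more elementary and transparent argument for a reader who only needs this one case.
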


\begin{proof}
This is the special case $a=2$ and $b=c=k=0$ of Theorem 1 in \cite{H12}. The same result was proven earlier in e.g. Proposition 3.1  in \cite{HS}.
\end{proof}

The classical Laplace operator $\Delta$ and $|\ux|^{2}$ together generate $\mathfrak{sl}_{2}$ and this Lie algebra is the even subalgebra of $\mathfrak{osp}(1|2)$ (\cite{H12}). 

We further introduce the so-called Gamma operator (see e.g. \cite{MR1169463})
\[
\Gamma_{\ux} := - \sum_{j<k} e_{j}e_{k} (x_{j} \partial_{x_{k}} - x_{k}\partial_{x_{j}}) = -\ux \upx - \mE. 
\]
The operators $x_{j} \partial_{x_{k}} - x_{k}\partial_{x_{j}}$ are also called Euler angles (see \cite{V}).
Note that $\Gamma_{\ux}$ commutes with radial functions, i.e. $[\Gamma_{\ux}, f(|\ux|)] =0$.

Denote by $\cP$ the space of polynomials taking values in $\cC l_{0,m}$, i.e. 
$$
   \cP : = \mR[x_{1}, \ldots, x_{m}] \otimes \cC l_{0,m}.
$$ 
The space of homogeneous polynomials of degree $k$ is then denoted by $\cP_{k}$. 
The space $\cM_{k} : = \ker{\upx} \cap \cP_{k}$ is called the space of spherical monogenics 
of degree $k$. Similarly,  $\cH_{k} := \ker{\Delta} \cap \cP_{k}$ is the space of spherical 
harmonics of degree $k$. 

The elements of $\cH_k$ are functions of the form \eqref{clifford_func}
with $f_{i_1,\ldots,i_j}$ being ordinary harmonics. It follows, in particular, that the reproducing kernel
of $\cH_k$ is $\frac{\lambda+k}{\l} C^{\lambda}_{k}(\langle \xi,\eta \rangle)$, with $\l= (m-2)/2$ and 
$C_k^\lambda$ being the Gegenbauer polynomial, the same reproducing kernel for the space of 
ordinary spherical harmonics of degree $k$ (cf. \cite{MR1827871,V}). This means that
\begin{equation} \label{FH}
\frac{\lambda+k}{\l} \int_{\mS^{m-1}} C^{\lambda}_{k}(\langle \xi,\eta \rangle) H_{\ell}(\xi) d \sigma(\xi) = c \, \delta_{k \ell} \, H_{\ell}(\eta), \quad H_{\ell} \in \cH_{\ell}
\end{equation}
with $c = \frac{2 \pi^{\frac{m}{2}}}{\Gamma(m/2)}$. The definition shows immediately that $\cM_{k} \subset \cH_{k}$. More precisely, we have the following Fischer decomposition (see \cite[Theorem 1.10.1]{MR1169463}):
\begin{equation}
\cH_{k} = \cM_{k} \oplus \ux \cM_{k-1}.
\label{Fischer}
\end{equation}
It is easy to construct projection operators to the components of this decomposition. They are given by 
(\cite[Corollary 1.3.3]{MR1169463})
\begin{align*}
\mP_{1} &= 1+\frac{\ux \upx}{2k+m-2}, \\
\mP_{2} &= -\frac{\ux \upx}{2k+m-2}.   
\end{align*}
and they satisfy $\mP_{1}+ \mP_{2} = 1$, $\mP_{1}\cH_{k} = \cM_{k}$ and $\mP_{2}\cH_{k} = \ux \cM_{k-1}$.
We also have the relations 
\begin{align} \label{gammaeig1}
\Gamma_{\ux} \cM_{k} &= -k \cM_{k},\\
\label{gammaeig2}
\Gamma_{\ux} ( \ux \cM_{k-1})&= (k+m -2) \ux \cM_{k-1},
\end{align}
which follows easily from $\Gamma_{\ux} = -\ux \upx - \mE$ and Theorem \ref{ospFamily}.

In the sequel we will often use the following well-known properties of Gegenbauer polynomials (see \cite{Sz}):
\begin{equation}
\frac{\l + n}{\l} C^{\l}_{n}(w) =  C_{n}^{\l+1}(w) - C_{n-2}^{\l+1}(w)
\label{Geg3}
\end{equation}
and
\begin{equation}
w C^{\l+1}_{n-1}(w) = \frac{n}{2(n+ \l)} C^{\l+1}_{n}(w) + \frac{n + 2 \l}{2(n+ \l)} C^{\l+1}_{n-2}(w).
\label{Geg2}
\end{equation}
From (\ref{Geg3}) we immediately obtain
\begin{equation}
C^{\l+1}_{n}(w) = \sum_{k=0}^{\lfloor \frac{n}{2}\rfloor} \frac{\l + n-2k}{\l} C^{\l}_{n-2k}(w).
\label{Geg1}
\end{equation}

Now we define the inner product and the wedge product of two vectors $\ux$ and $\uy$
\begin{align*}
\langle \ux, \uy \rangle &:= \sum_{j=1}^{m} x_{j} y_{j} = - \frac{1}{2} (\ux \uy + \uy  \ux)\\
\ux \wedge \uy &:= \sum_{j<k} e_{j}e_{k} (x_{j} y_{k} - x_{k}y_{j}) = \frac{1}{2} (\ux \uy - \uy  \ux).
\end{align*}
Here, the multiplication of vectors $\ux$ and $\uy$ of the form \eqref{VV} is defined using the relations  \eqref{eq:eij} of the Clifford algebra.

For the sequel we need the square of $\ux \wedge \uy$. This is most easily calculated as follows
\begin{align*}
(\ux \wedge \uy )^{2} &= \frac{1}{4} (\ux \uy - \uy  \ux) (\ux \uy - \uy  \ux)\\
&= \frac{1}{4}(\ux \uy \ux \uy +\uy \ux  \uy  \ux -2 |\ux|^{2}|\uy|^{2})\\
&= \frac{1}{4} (-2 \langle \ux,\uy\rangle \ux\uy -2 \langle \ux,\uy\rangle \uy\ux -4 |\ux|^{2}|\uy|^{2})\\
&= - |\ux|^{2}|\uy|^{2} +\langle \ux,\uy\rangle^{2},
\end{align*}
from which we see that $(\ux \wedge \uy )^{2}$ is real-valued. As a consequence, we also obtain that
\[
(\ux \wedge \uy )^{2} = - \sum_{j<k}  (x_{j} y_{k} - x_{k}y_{j})^{2}.
\]
In turn, this allows us to estimate
\begin{equation}
\left| \frac{x_{j} y_{k} - x_{k}y_{j}}{\sqrt{|\ux|^{2}|\uy|^{2} -\langle \ux,\uy\rangle^{2}}} \right| \leq 1, \qquad \forall \ux, \uy \in \mR^{m}.
\label{estimateCterm}
\end{equation}

Let us fix some notations for the Clifford-Fourier transform and its inverse that we wish to study. The operator exponential definition of the Clifford-Fourier transform is given by (see \cite{MR2190678}, we follow the normalization given in \cite{AIEP})
\begin{equation}
e^{ \frac{i \pi m}{4}} e^{\frac{i \pi}{4}(\Delta - |\ux|^{2} \mp 2\Gamma)} = e^{ \frac{i \pi m}{4}} e^{\mp \frac{i \pi}{2}\Gamma  }e^{\frac{i \pi}{4}(\Delta - |\ux|^{2})}.
\label{CFTF}
\end{equation}
The second equality follows because $\Gamma$ commutes with $\Delta$ and $|\ux|^{2}$.
Formally, we obtain its inverse as follows
\[
e^{-  \frac{i \pi m}{4}} e^{-\frac{i \pi}{4}(\Delta - |\ux|^{2} \mp 2\Gamma )}.
\]

We introduce a basis $\{ \psi_{j,k,l}\}$ for the space $\cS(\mR^{m}) \otimes \cC l_{0,m}$, where 
$\cS(\mR^m)$ denotes the Schwartz space.  This basis is defined by 
\begin{align} \label{basis}
\begin{split}
\psi_{2j,k,l}(x) &:= L_{j}^{\frac{m}{2}+k-1}(|\ux|^{2}) M_{k}^{(l)} e^{-|\ux|^{2}/2},\\
\psi_{2j+1,k,l}(x) &:= L_{j}^{\frac{m}{2}+k}(|\ux|^{2}) \ux M_{k}^{(l)} e^{-|\ux|^{2}/2},
\end{split}
\end{align}
where $j,k \in \mN$, $\{M_{k}^{(l)} \in \cM_{k}: l= 1, \ldots, \dim \cM_{k}\}$ is a basis for $\cM_{k}$,
and $L_{j}^{\alpha}$ are the Laguerre polynomials. The set $\{ \psi_{j,k,l}\}$ forms a basis of 
$\cS(\mR^{m}) \otimes \cC l_{0,m}$ (as can be seen from (\ref{Fischer})). The action of the 
Clifford-Fourier transform on this basis is given by (see \cite{MR2190678}) 
\begin{align}
\label{EigCF1}
\begin{split}
e^{ \frac{i \pi m}{4}} e^{\mp \frac{i \pi}{2}\Gamma  }e^{\frac{i \pi}{4}(\Delta - |\ux|^{2})} (\psi_{2j,k,l}) &= (-1)^{j+k} (\mp 1)^{k} \psi_{2j,k,l},\\
e^{ \frac{i \pi m}{4}} e^{\mp \frac{i \pi}{2}\Gamma  }e^{\frac{i \pi}{4}(\Delta - |\ux|^{2})} (\psi_{2j+1,k,l}) &= i^{m} (-1)^{j+1} (\mp 1 )^{k+m-1} \psi_{2j+1,k,l}.
\end{split}
\end{align}
Observe that if the dimension $m$ is even, there are two eigenvalues $\pm 1$. If the dimension is odd, there are four eigenvalues, namely $\pm 1$ and $\pm i$.  

Combining formulas (\ref{classFourier}) and (\ref{expclassFourier}) for the classical Fourier transform with the definition of the Clifford-Fourier transform (\ref{CFTF}) we obtain
\[
e^{ \frac{i \pi m}{4}} e^{\mp \frac{i \pi}{2}\Gamma  }e^{\frac{i \pi}{4}(\Delta - |\ux|^{2})} \sim (2 \pi)^{-\frac{m}{2}}  e^{\mp i \frac{\pi}{2}\Gamma_{\uy}} \int_{\mR^{m}} e^{-i \la \ux, \uy \ra} f(x) dx.
\]
Heuristically, this suggests that the Clifford-Fourier transform can be defined as an integral transform with 
$e^{i \frac{\pi}{2}\Gamma_{\uy}} e^{-i \langle \ux,\uy \rangle}$ as its kernel. We give a formal definition as follows. 

\begin{defn} \label{def:FC}
On the Schwartz class of functions $\cS(\RR^m) \otimes \cC l_{0,m}$, we define
\begin{align*}
\cF_{\pm}f(y) & : = (2 \pi)^{-\frac{m}{2}} \int_{\mR^{m}} K_\pm(x,y) f(x) \, dx,\\
\cF_{\pm}^{-1} f(y) & :=(2 \pi)^{-\frac{m}{2}} \int_{\mR^{m}} \widetilde{K_\pm}(x,y) f(x) \, dx,
\end{align*}
where
\begin{align*}
K_\pm(x,y)& := e^{\mp i \frac{\pi}{2}\Gamma_{\uy}} e^{-i \langle \ux,\uy \rangle},\\
\widetilde{K_\pm}(x,y)& := e^{ \pm i \frac{\pi}{2}\Gamma_{\uy}} e^{i \langle \ux,\uy \rangle}.
\end{align*}
\end{defn}

Since the kernels  $K_\pm$ are defined via an exponential differential operator, it is not immediately clear if the kernel is bounded. As a result, it is not clear if $\cF_\pm$ is well-defined even on $\cS(\RR^m)$ and neither is it clear if $\cF_\pm^{-1}$ is indeed the inversion of $\cF_\pm$. The main purpose of the paper is to answer these questions.

\begin{rem}
It is important to note that the kernels are not symmetric, in the sense that $K(x,y) \neq K(y,x)$ (see e.g. Theorem \ref{seriesthm}
below). Hence, we adopt the convention that we always integrate over the first variable in the kernel.
\end{rem}

\section{Series representation of the kernel}
\label{CFdev}
\setcounter{equation}{0}

In this section we derive series representations of the kernels introduced in the previous section. We first focus on $K_-(x,y)=e^{i \frac{\pi}{2}\Gamma_{\uy}} e^{-i \langle \ux,\uy \rangle}$.

We start from the decomposition of the classical Fourier kernel in $\mR^{m}$ in terms of Gegenbauer polynomials and Bessel functions (see \cite[Section 11.5]{MR0010746})
\[
e^{-i \langle \ux,\uy \rangle} = 2^{\lambda} \Gamma(\lambda)\sum_{k=0}^{\infty}(k+ \lambda) (-i)^{k} (|\ux||\uy|)^{-k-\lambda} J_{k+ \lambda}(|\ux||\uy|) \; (|\ux||\uy|)^{k} C_{k}^{\lambda}(\langle \xi,\eta \rangle),
\]
where $\xi= \ux/|\ux|$, $\eta = \uy/|\uy|$ and $\lambda =(m-2)/2$. Then, using the fact that $\Gamma_{\uy}$ commutes with radial functions, we have 
\begin{align}\label{GammaWatson}
 e^{i \frac{\pi}{2}\Gamma_{\uy}} e^{-i \langle \ux,\uy \rangle} =& \, 2^{\lambda} \Gamma(\lambda)\sum_{k=0}^{\infty}(k+ \lambda) (-i)^{k} (|\ux||\uy|)^{-k-\lambda} J_{k+ \lambda}(|\ux||\uy|) \\
&\times e^{i \frac{\pi}{2}\Gamma_{\uy}}\left[ (|\ux||\uy|)^{k} C_{k}^{\lambda}(\langle \xi,\eta \rangle)\right].
\notag
\end{align}

We now prove the following lemma.
\begin{lem}\label{actionGamma}
For $\ux = |\ux| \xi$ and $\uy = |\uy| \eta$, 
\begin{align*}
e^{i \frac{\pi}{2}\Gamma_{\uy}}(|\ux||\uy|)^{k} C_{k}^{\lambda}(\langle \xi,\eta \rangle) = \, & \frac{1}{2}(i^{k+m-2} + i^{-k}) (|\ux||\uy|)^{k} C_{k}^{\lambda}(\langle \xi,\eta \rangle)\\
&  - \frac{\lambda }{2(k+\lambda)} (i^{k+m-2} - i^{-k}) (|\ux||\uy|)^{k} C_{k}^{\lambda}(\langle \xi,\eta \rangle)\\
& +  \frac{ \lambda  }{k + \lambda} \ux \wedge \uy (  i^{k+m-2} -i^{-k}) (|\ux||\uy|)^{k-1} C_{k-1}^{\lambda+1}(\langle \xi,\eta \rangle).
\end{align*}
\end{lem}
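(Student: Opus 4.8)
The plan is to exploit that, for fixed $\ux$, the zonal function $P(\uy) := (|\ux||\uy|)^{k} C_{k}^{\lambda}(\langle \xi,\eta \rangle)$ is a homogeneous polynomial of degree $k$ in $\uy$ which is harmonic: it is (up to the constant of \eqref{FH}) the solid/zonal extension of the reproducing kernel of $\cH_{k}$, so $P \in \cH_{k}$ as a function of $\uy$. First I would invoke the Fischer decomposition \eqref{Fischer} to write $P = \mP_{1}P + \mP_{2}P$ with $\mP_{1}P \in \cM_{k}$ and $\mP_{2}P \in \uy\,\cM_{k-1}$. Since $\Gamma_{\uy}$ commutes with the radial factor $(|\ux||\uy|)^{k}$, the eigenvalue relations \eqref{gammaeig1} and \eqref{gammaeig2} show that $\Gamma_{\uy}$ acts as $-k$ on $\mP_{1}P$ and as $k+m-2$ on $\mP_{2}P$, whence $e^{i\frac{\pi}{2}\Gamma_{\uy}}P = i^{-k}\mP_{1}P + i^{k+m-2}\mP_{2}P$.

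Using $\mP_{1} = 1 - \mP_{2}$ this reduces to $e^{i\frac{\pi}{2}\Gamma_{\uy}}P = i^{-k}P + (i^{k+m-2}-i^{-k})\mP_{2}P$, so the whole statement reduces to an explicit evaluation of the projection $\mP_{2}P = -\tfrac{1}{2k+m-2}\,\uy\upy P$. This is the computational heart. Writing $r=|\uy|$ and $t=\langle\xi,\eta\rangle=\langle\xi,\uy\rangle/r$, and using the Gegenbauer derivative formula $\tfrac{d}{dt}C_{k}^{\lambda}(t)=2\lambda\,C_{k-1}^{\lambda+1}(t)$, I would compute $\upy P$ termwise by the chain rule, obtaining a term proportional to $\uy\,C_{k}^{\lambda}(t)$ together with terms carrying $C_{k-1}^{\lambda+1}(t)$ and the vectors $\xi$ and $\uy$.

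Then I would left-multiply by $\uy$ and reduce the Clifford products via $\uy^{2}=-|\uy|^{2}$ and $\uy\,\xi = -\langle\uy,\xi\rangle + \uy\wedge\xi$, where $\uy\wedge\xi = -\tfrac{1}{|\ux|}\,\ux\wedge\uy$. The crucial simplification is that the two scalar contributions proportional to $\langle\xi,\uy\rangle\,C_{k-1}^{\lambda+1}(t)$ cancel (because $\langle\xi,\uy\rangle=tr$), leaving exactly $\uy\upy P = -k(|\ux||\uy|)^{k}C_{k}^{\lambda} - 2\lambda(|\ux||\uy|)^{k-1}(\ux\wedge\uy)\,C_{k-1}^{\lambda+1}$. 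Since $2k+m-2 = 2(k+\lambda)$, this gives $\mP_{2}P = \tfrac{k}{2(k+\lambda)}P + \tfrac{\lambda}{k+\lambda}(|\ux||\uy|)^{k-1}(\ux\wedge\uy)\,C_{k-1}^{\lambda+1}$. Substituting into $i^{-k}P + (i^{k+m-2}-i^{-k})\mP_{2}P$ and collecting the coefficient of $P$, which equals $\tfrac{1}{2}(i^{k+m-2}+i^{-k}) - \tfrac{\lambda}{2(k+\lambda)}(i^{k+m-2}-i^{-k})$, reproduces the claimed three-term formula.

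I expect the main obstacle to be the bookkeeping in this Dirac-operator computation: differentiating $C_{k}^{\lambda}(\langle\xi,\uy\rangle/|\uy|)$ correctly, separating the scalar from the bivector ($\ux\wedge\uy$) part after multiplying by $\uy$, and confirming the cancellation of the $\langle\xi,\uy\rangle\,C_{k-1}^{\lambda+1}$ terms; everything after that is routine algebra. A secondary point to nail down is the membership $P\in\cH_{k}$, which is what legitimizes the use of the Fischer decomposition and the eigenvalue relations for $\Gamma_{\uy}$.
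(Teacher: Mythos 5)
Your proposal is correct and follows essentially the same route as the paper: both identify $(|\ux||\uy|)^{k}C_{k}^{\lambda}(\langle\xi,\eta\rangle)$ as a harmonic homogeneous polynomial of degree $k$ in $\uy$, compute $\uy\upy$ of it to extract the Fischer components $F_k\in\cM_k$ and $G_k\in\uy\cM_{k-1}$, apply the eigenvalues $i^{-k}$ and $i^{k+m-2}$ of $e^{i\frac{\pi}{2}\Gamma_{\uy}}$, and collect terms. Your values for $\uy\upy P$ and for $\mP_{2}P$, and the resulting coefficient $\frac{1}{2}(i^{k+m-2}+i^{-k})-\frac{\lambda}{2(k+\lambda)}(i^{k+m-2}-i^{-k})$ of the scalar part, all match the paper's computation.
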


\begin{proof}
Note that $(|\ux||\uy|)^{k} C_{k}^{\lambda}(\langle \xi,\eta \rangle)$ is a harmonic homogeneous polynomial of degree $k$. It can hence be decomposed into monogenic components (with respect to the variables $y$) according to (\ref{Fischer}).  In order to obtain this decomposition, we first calculate
\begin{align*}
\upy \left( (|\ux||\uy|)^{k} C_{k}^{\lambda}(\langle \xi,\eta \rangle) \right) =& \, k (|\ux||\uy|)^{k} \frac{\uy}{|\uy|^{2}}C_{k}^{\lambda}(\langle \xi,\eta \rangle)\\
& +  (|\ux||\uy|)^{k} \left(\frac{\ux}{|\ux||\uy|} -\frac{\langle \ux,\uy \rangle}{|\ux||\uy|} \frac{\uy}{|\uy|^{2}} \right)\left(\frac{d}{dt}C_{k}^{\lambda}\right)(\langle \xi,\eta \rangle) \\
=& \, k (|\ux||\uy|)^{k} \frac{\uy}{|\uy|^{2}}C_{k}^{\lambda}(\langle \xi,\eta \rangle)\\
& + 2 \lambda (|\ux||\uy|)^{k-1} \left(\ux -\langle \ux,\uy \rangle \frac{\uy}{|\uy|^{2}} \right) C_{k-1}^{\lambda+1}(\langle \xi,\eta \rangle), 
\end{align*}
where we have used $\frac{d}{d t} C^{\lambda}_{k}(t) = 2 \lambda  C^{\lambda+1}_{k-1}(t)$. This yields
\begin{align*}
\uy \upy \left( (|\ux||\uy|)^{k} C_{k}^{\lambda}(\langle \xi,\eta \rangle) \right) =& \, - k (|\ux||\uy|)^{k} C_{k}^{\lambda}(\langle \xi,\eta \rangle)\\
& +  2 \lambda (|\ux||\uy|)^{k-1} \left(\uy \ux +\langle \ux,\uy \rangle  \right) C_{k-1}^{\lambda+1}(\langle \xi,\eta \rangle) \\
=& - k (|\ux||\uy|)^{k} C_{k}^{\lambda}(\langle \xi,\eta \rangle)\\
&  - 2 \lambda (|\ux||\uy|)^{k-1}\left(\ux \wedge \uy \right) C_{k-1}^{\lambda+1}(\langle \xi,\eta \rangle). 
\end{align*}
Thus, we have that $(|\ux||\uy|)^{k} C_{k}^{\lambda}(\langle \xi,\eta \rangle) = F_{k} + G_{k}$ with $F_{k} \in \cM_{k}$ and $G_{k} \in \uy \cM_{k-1}$ given by
\begin{align*}
F_{k} =& \, \left(1-\frac{k}{2k+m-2}   \right) (|\ux||\uy|)^{k} C_{k}^{\lambda}(\langle \xi,\eta \rangle)\\
& \quad  - \frac{2 \lambda}{2k+m-2}(|\ux||\uy|)^{k-1}\left(\ux \wedge \uy \right) C_{k-1}^{\lambda+1}(\langle \xi,\eta \rangle), \\
G_{k} =& \, \frac{1}{2k+m-2} \left[  k (|\ux||\uy|)^{k} C_{k}^{\lambda}(\langle \xi,\eta \rangle)  + 2 \lambda(|\ux||\uy|)^{k-1}\left(\ux \wedge \uy \right) C_{k-1}^{\lambda+1}(\langle \xi,\eta \rangle) \right].
\end{align*}

Using (\ref{gammaeig1}) and (\ref{gammaeig2}) we obtain 
\begin{align*}
e^{i \frac{\pi}{2}\Gamma_{\uy}} F_{k} = &\, i^{-k} F_{k}, \\
e^{i \frac{\pi}{2}\Gamma_{\uy}} G_{k} =& \, i^{k+m-2} G_{k}.
\end{align*}
We can hence calculate
\begin{align*}
 e^{i \frac{\pi}{2}  \Gamma_{\uy}} & (|\ux||\uy|)^{k} C_{k}^{\lambda}(\langle \xi,\eta \rangle)\\
 = &\, i^{-k} \left(1-\frac{k}{2k+m-2}   \right) (|\ux||\uy|)^{k} C_{k}^{\lambda}(\langle \xi,\eta \rangle)\\
 &  -  i^{-k} \frac{2 \lambda}{2k+m-2}(|\ux||\uy|)^{k-1}\left(\ux \wedge \uy \right) C_{k-1}^{\lambda+1}(\langle \xi,\eta \rangle)\\
&+ \frac{i^{k+m-2}}{2k+m-2} \left[  k (|\ux||\uy|)^{k} C_{k}^{\lambda}(\langle \xi,\eta \rangle)  + 2 \lambda(|\ux||\uy|)^{k-1}\left(\ux \wedge \uy \right) C_{k-1}^{\lambda+1}(\langle \xi,\eta \rangle) \right]\\
= &\, i^{-k} (|\ux||\uy|)^{k} C_{k}^{\lambda}(\langle \xi,\eta \rangle)  + \frac{k }{2k+m-2} (i^{k+m-2} - i^{-k}) (|\ux||\uy|)^{k} C_{k}^{\lambda}(\langle \xi,\eta \rangle)\\
& +  \frac{2 \lambda}{2k+m-2}(  i^{k+m-2} -i^{-k}) (|\ux||\uy|)^{k-1}\left(\ux \wedge \uy \right) C_{k-1}^{\lambda+1}(\langle \xi,\eta \rangle)\\
=&\,  i^{-k} (|\ux||\uy|)^{k} C_{k}^{\lambda}(\langle \xi,\eta \rangle)  + \frac{k }{2(k+\lambda)} (i^{k+m-2} - i^{-k}) (|\ux||\uy|)^{k} C_{k}^{\lambda}(\langle \xi,\eta \rangle)\\
&+  \frac{2 \lambda}{2(k + \lambda)}(  i^{k+m-2} -i^{-k}) (|\ux||\uy|)^{k-1}\left(\ux \wedge \uy \right) C_{k-1}^{\lambda+1}(\langle \xi,\eta \rangle)\\
=&\,  \frac{1}{2}(i^{k+m-2} + i^{-k}) (|\ux||\uy|)^{k} C_{k}^{\lambda}(\langle \xi,\eta \rangle)\\
&  - \frac{\lambda }{2(k+\lambda)} (i^{k+m-2} - i^{-k}) (|\ux||\uy|)^{k} C_{k}^{\lambda}(\langle \xi,\eta \rangle)\\
& +  \frac{ \lambda}{k + \lambda}(  i^{k+m-2} -i^{-k}) (|\ux||\uy|)^{k-1}\left(\ux \wedge \uy \right) C_{k-1}^{\lambda+1}(\langle \xi,\eta \rangle),
\end{align*}
thus completing the proof of the lemma.
\end{proof}

Combining formula (\ref{GammaWatson}) with Lemma \ref{actionGamma}, we find
\[
K_-(x,y) = e^{i \frac{\pi}{2}\Gamma_{\uy}} e^{-i \langle \ux,\uy \rangle} = A_{\lambda} + B_{\lambda} + \left(\ux \wedge \uy \right) C_{\lambda}
\]
with
\begin{align*}
A_{\lambda} :=& \, 2^{\lambda-1} \Gamma(\lambda+1)\sum_{k=0}^{\infty}  (i^{m} + (-1)^{k})  (|\ux||\uy|)^{-\lambda} J_{k+ \lambda}(|\ux||\uy|) \;  C_{k}^{\lambda}(\langle \xi,\eta \rangle),\\
B_{\lambda} :=& \, - 2^{\lambda-1} \Gamma(\lambda)\sum_{k=0}^{\infty}(k+ \lambda)  (i^{m} - (-1)^{k}) (|\ux||\uy|)^{-\lambda} J_{k+ \lambda}(|\ux||\uy|) \; C_{k}^{\lambda}(\langle \xi,\eta \rangle),\\
C_{\lambda} :=& \, - (2 \lambda) 2^{\lambda-1} \Gamma(\lambda)\sum_{k=1}^{\infty} (i^{m} + (-1)^{k}) (|\ux||\uy|)^{-\lambda-1} J_{k+ \lambda}(|\ux||\uy|) \;  C_{k-1}^{\lambda+1} (\langle \xi,\eta \rangle).
\end{align*}

Introducing new variables $z = |\ux||\uy|$ and $w = \langle \xi,\eta \rangle$ to simplify notations and substituting $m = 2 \lambda + 2$, we have thus obtained the following theorem.

\begin{thm}
The kernel of the Clifford-Fourier transform is given by
\[
K_-(x,y) =  A_{\lambda} + B_{\lambda} + \left(\ux \wedge \uy \right) C_{\lambda}
\]
with
\begin{align*}
A_{\lambda}(w,z) =& \,2^{\lambda-1} \Gamma(\lambda+1)\sum_{k=0}^{\infty}  (i^{2 \lambda + 2} + (-1)^{k})  z^{-\lambda} J_{k+ \lambda}(z) \;  C_{k}^{\lambda}(w),\\
B_{\lambda}(w,z) = & \, - 2^{\lambda-1} \Gamma(\lambda)\sum_{k=0}^{\infty}(k+ \lambda)  (i^{2 \lambda + 2} - (-1)^{k}) z^{-\lambda} J_{k+ \lambda}(z) \; C_{k}^{\lambda}(w),\\
C_{\lambda}(w,z) = & \, -  2^{\lambda-1} \Gamma(\lambda)\sum_{k=0}^{\infty} (i^{2 \lambda + 2} + (-1)^{k}) z^{-\lambda-1} J_{k+ \lambda}(z) \;  \left(\frac{d}{dw}C_{k}^{\lambda}\right) (w),
\end{align*}
where $z = |\ux||\uy|$ and $w = \langle \xi,\eta \rangle$.
\label{seriesthm}
\end{thm}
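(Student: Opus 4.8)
The statement is essentially a bookkeeping consequence of the work already carried out: formula (\ref{GammaWatson}) reduces the evaluation of $K_-(x,y)$ to applying $e^{i\frac{\pi}{2}\Gamma_{\uy}}$ to the angular factors $(|\ux||\uy|)^{k}C_{k}^{\lambda}(\langle \xi,\eta\rangle)$, and Lemma \ref{actionGamma} computes exactly this. The plan is therefore to substitute the three-term output of Lemma \ref{actionGamma} into the series (\ref{GammaWatson}), multiply through by the Bessel prefactor, and collect the scalar ($C_{k}^{\lambda}$) contributions separately from the bivector ($\ux\wedge\uy$) contributions.

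Concretely, I would write the $k$-th coefficient of (\ref{GammaWatson}) as $P_{k} := 2^{\lambda}\Gamma(\lambda)(k+\lambda)(-i)^{k}(|\ux||\uy|)^{-k-\lambda}J_{k+\lambda}(|\ux||\uy|)$ and track the three terms of Lemma \ref{actionGamma} one at a time. The only algebraic identities required are
\[
(-i)^{k}\bigl(i^{k+m-2}\pm i^{-k}\bigr) = i^{m-2}\pm(-1)^{k}, \qquad i^{m-2} = -\,i^{2\lambda+2},
\]
(the first from $(-i)^{k}=(-1)^{k}i^{k}$ and $i^{2k}=(-1)^{k}$, the second from $m=2\lambda+2$), together with $\Gamma(\lambda+1)=\lambda\Gamma(\lambda)$ and the differentiation rule $\frac{d}{dw}C_{k}^{\lambda}(w)=2\lambda\,C_{k-1}^{\lambda+1}(w)$. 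The accounting point I would emphasize is that the factor $(k+\lambda)$ in $P_{k}$ behaves differently on the three terms: on the first term of the lemma, which carries no compensating denominator, it survives and yields $B_{\lambda}$; on the second and third terms, which carry $1/(k+\lambda)$, it cancels and yields $A_{\lambda}$ and $(\ux\wedge\uy)C_{\lambda}$ respectively. In the bivector term one rewrites $C_{k-1}^{\lambda+1}=\frac{1}{2\lambda}\frac{d}{dw}C_{k}^{\lambda}$ to reach the stated form, and absorbs $\lambda\Gamma(\lambda)=\Gamma(\lambda+1)$ and the leftover $\tfrac12$ into $2^{\lambda-1}$. After the substitutions $z=|\ux||\uy|$ and $w=\langle\xi,\eta\rangle$ these three pieces are precisely $A_{\lambda},B_{\lambda},C_{\lambda}$; the sign flips coming from $i^{m-2}=-i^{2\lambda+2}$ are exactly what turns $i^{m-2}\pm(-1)^{k}$ into $\mp(i^{2\lambda+2}\mp(-1)^{k})$ and produces the signs displayed in the theorem.

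The one step that genuinely needs justification, rather than mere computation, is the legitimacy of applying $e^{i\frac{\pi}{2}\Gamma_{\uy}}$ term by term to the infinite series. I would settle this by observing that each summand $(|\ux||\uy|)^{k}C_{k}^{\lambda}(\langle\xi,\eta\rangle)$ is a fixed homogeneous harmonic polynomial of degree $k$ in $y$, on which $e^{i\frac{\pi}{2}\Gamma_{\uy}}$ acts through the finite Fischer decomposition (\ref{Fischer}) with the \emph{unimodular} eigenvalues $i^{-k}$ and $i^{k+m-2}$ furnished by (\ref{gammaeig1})--(\ref{gammaeig2}); hence the operator acts diagonally with bounded eigenvalues, and the rearranged series converges in the same sense (locally uniformly in $z$, with $w\in[-1,1]$) as the classical Gegenbauer--Bessel expansion of $e^{-i\langle \ux,\uy\rangle}$ from which we started. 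This is essentially the only obstacle; once it is in place, the identification of $A_{\lambda},B_{\lambda},C_{\lambda}$ is the routine coefficient simplification described above.
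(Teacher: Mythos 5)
Your proposal is correct and follows exactly the paper's route: substitute Lemma \ref{actionGamma} into the expansion (\ref{GammaWatson}), track the factor $(k+\lambda)$ and the powers of $i$ via $(-i)^k(i^{k+m-2}\pm i^{-k})=i^{m-2}\pm(-1)^k$ and $i^{m-2}=-i^{2\lambda+2}$, and rewrite $C_{k-1}^{\lambda+1}$ as $\frac{1}{2\lambda}\frac{d}{dw}C_k^{\lambda}$; the paper treats this as an immediate consequence and does not even display the computation. Your added remark justifying the term-by-term application of $e^{i\frac{\pi}{2}\Gamma_{\uy}}$ through the unimodular eigenvalues on the Fischer components is a point the paper leaves implicit, and it is sound.
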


The functions $A_{\lambda}$, $B_{\lambda}$ and $C_{\lambda}$ satisfy nice recursive relations. They are given in the following lemma.
\begin{lem} \label{RecursionsProps}
For $m \ge 2$, or equivalently, $\lambda \ge 1$, one has
\begin{align*}
A_{\lambda}(w,z) =& \, - \frac{\lambda}{\lambda -1} \frac{1}{z} \partial_{w}A_{\lambda-1}(w,z),\\
B_{\lambda}(w,z) =& \, -  \frac{1}{z} \partial_{w}B_{\lambda-1}(w,z),\\
C_{\lambda}(w,z) =& \, -\frac{1}{\lambda z} \partial_{w} A_{\lambda}(w,z).
\end{align*}
\end{lem}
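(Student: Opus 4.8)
The plan is to prove all three identities by manipulating the defining series of Theorem~\ref{seriesthm} term by term, relying on three elementary facts: the Gegenbauer derivative rule $\frac{d}{dw}C_n^{\mu}(w) = 2\mu\, C_{n-1}^{\mu+1}(w)$, the functional equation $\Gamma(\lambda+1) = \lambda \Gamma(\lambda)$, and the sign identity $i^{2(\lambda-1)+2} = -\,i^{2\lambda+2}$ recording how the factor $i^{2\lambda+2}$ behaves when $\lambda$ is lowered by one. For fixed $z$ the three series are, in $w$, Gegenbauer expansions that converge locally uniformly together with their $w$-derivatives, so term-by-term differentiation in $w$ is legitimate; alternatively one may simply compare coefficients of the linearly independent building blocks $z^{-\lambda}J_{k+\lambda}(z)C_j^{\lambda}(w)$ on the two sides.

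The relation for $C_\lambda$ is immediate and I would dispose of it first. Differentiating $A_\lambda$ termwise in $w$ leaves every factor untouched except $C_k^\lambda(w)$, which becomes $\frac{d}{dw}C_k^\lambda(w)$; writing $\Gamma(\lambda+1) = \lambda\Gamma(\lambda)$ and accounting for the extra power $z^{-1}$ then reproduces exactly the series defining $C_\lambda$, up to the scalar $-\lambda z$. This yields $\partial_{w} A_\lambda = -\lambda z\, C_\lambda$, which is the stated identity.

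For the $A_\lambda$ and $B_\lambda$ relations I would start from $A_{\lambda-1}$ (resp.\ $B_{\lambda-1}$), differentiate termwise, and apply the Gegenbauer rule with $\mu = \lambda-1$, turning $C_k^{\lambda-1}(w)$ into $2(\lambda-1)C_{k-1}^{\lambda}(w)$. The resulting sum runs over $k\ge 1$, so I reindex by $j = k-1$; this replaces $J_{k+\lambda-1}(z)$ by $J_{j+\lambda}(z)$ and, crucially, sends $(-1)^k$ to $-(-1)^j$. Combining this parity flip with the sign identity $i^{2(\lambda-1)+2} = -i^{2\lambda+2}$ converts the coefficient $\bigl(i^{2(\lambda-1)+2}+(-1)^k\bigr)$ into $-\bigl(i^{2\lambda+2}+(-1)^j\bigr)$ in the $A$-case, and $\bigl(i^{2(\lambda-1)+2}-(-1)^k\bigr)$ into $-\bigl(i^{2\lambda+2}-(-1)^j\bigr)$ in the $B$-case, so that the $\lambda$-level series reappears with the correct overall sign. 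Reconciling the scalar prefactors---using $2(\lambda-1)\cdot 2^{\lambda-2} = (\lambda-1)2^{\lambda-1}$ together with $(\lambda-1)\Gamma(\lambda-1) = \Gamma(\lambda)$, and comparing against $\Gamma(\lambda+1) = \lambda\Gamma(\lambda)$---produces the factor $\frac{\lambda}{\lambda-1}$ for $A_\lambda$ and no extra rational factor for $B_\lambda$, matching the claimed recursions; in the $B$-case one additionally notes that the weight $(k+\lambda-1)$ becomes $(j+\lambda)$ after reindexing, precisely the weight occurring in $B_\lambda$.

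The only genuine subtlety---the main obstacle---is the bookkeeping around the index shift: one must track simultaneously that reindexing flips the parity factor $(-1)^k$ and that lowering $\lambda$ flips the sign of $i^{2\lambda+2}$, and verify that these two sign changes conspire to \emph{reproduce}, rather than destroy, the $\pm$ structure of the coefficients. Everything else is routine reconciliation of Gamma factors and powers of $2$ and $z$, valid for $\lambda\ge 1$, with the understanding that the $A_\lambda$ recursion, which carries the factor $\frac{\lambda}{\lambda-1}$, is to be read for $\lambda>1$.
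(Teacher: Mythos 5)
Your proof is correct and follows essentially the same route as the paper: termwise differentiation of the series using $\frac{d}{dw}C_k^{\mu}(w)=2\mu C_{k-1}^{\mu+1}(w)$, followed by reconciliation of the Gamma factors and the signs. The only cosmetic difference is that the paper organizes the sign bookkeeping by splitting $A_\lambda$ (resp.\ $B_\lambda$) into even- and odd-index parts and observing that differentiation swaps them, whereas you track the interplay between the parity flip from reindexing and the identity $i^{2\lambda}=-i^{2\lambda+2}$ inline; your closing caveat about reading the $A_\lambda$ recursion only for $\lambda>1$ is also apt, since $A_0=0$ makes the $\lambda=1$ case degenerate and the paper handles $A_1$ separately.
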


\begin{proof}
We start with the relation for $A_{\lambda}(w,z)$. We rewrite this function as $A_{\lambda}(w,z) = A_{\lambda}^{ \mbox{\small odd}}(w,z) + A_{\lambda}^{\mbox{\small even}}(w,z)$ with
\begin{align*}
A_{\lambda}^{ \mbox{\small odd}}(w,z) &=\, -2^{\lambda-1} \Gamma(\lambda+1)  (i^{2 \lambda} +1)  z^{-\lambda}\sum_{k=0}^{\infty} J_{2k+ \lambda+1}(z) \;  C_{2k+1}^{\lambda}(w),\\
A_{\lambda}^{\mbox{ \small even}}(w,z) &=\, 2^{\lambda-1} \Gamma(\lambda+1)  (1- i^{2 \lambda })  z^{-\lambda}\sum_{k=0}^{\infty} J_{2k+ \lambda}(z) \;  C_{2k}^{\lambda}(w).
\end{align*}
We then calculate, using $\frac{d}{d w} C^{\lambda}_{k}(w) = 2 \lambda  C^{\lambda+1}_{k-1}(w)$, that
\begin{align*}
 \partial_{w}A_{\lambda}^{\mbox{ \small odd}}(w,z) =&\, -2^{\lambda} \lambda \Gamma(\lambda+1)  (i^{2 \lambda} +1)  z^{-\lambda}\sum_{k=0}^{\infty} J_{2k+ \lambda+1}(z) \;  C_{2k}^{\lambda+1}(w)\\
=&\, 2^{\lambda} \frac{\lambda}{\lambda+1} \Gamma(\lambda+2)  (i^{2 \lambda+2} -1)  z^{-\lambda}\sum_{k=0}^{\infty} J_{2k+ \lambda+1}(z) \;  C_{2k}^{\lambda+1}(w)\\
=&\, - \frac{\lambda}{\lambda+1} z A_{\lambda+1}^{\mbox{ \small even}}(w,z)
\end{align*}
and similarly $\partial_{w}A_{\lambda}^{\mbox{ \small even}}(w,z) =  -\frac{\lambda}{\lambda+1} z A_{\lambda+1}^{\mbox{ \small odd}}(w,z)$. This completes the proof of the first statement.

The proof for $B_{\lambda}$ is similar and the third statement is trivial.
\end{proof}

Using exactly the same line of reasoning as leading to Theorem \ref{seriesthm}, we can also calculate the kernel $K_+(x,y)$ and the inverse kernels $\widetilde{K_\pm}(x,y)$. It follows that these kernels satisfy the following relations: 

\begin{prop}
\label{kernelsrels}
For $x, y \in \RR^m$, 
\begin{align*}
K_+(x,y) &= \overline{K_-(x,-y)},\\
\widetilde{K_\pm}(x,y) &= \overline{K_\pm(x,y)}.
\end{align*}
In particular, in the case $m$ even, $K_-(x,y)$ is real-valued and the complex conjugation can be omitted.
\end{prop}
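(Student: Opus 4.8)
The plan is to work directly from the operator definitions of the four kernels in Definition~\ref{def:FC}, exploiting two structural features of $\Gamma_{\uy}$: it has real coefficients, and it is built from parity-even pieces. First I would record that complex conjugation on $\CC \otimes \cC l_{0,m}$ acts only on the scalar imaginary unit ($\overline{i} = -i$) and fixes each generator $e_j$, since $\cC l_{0,m}$ is a \emph{real} algebra; in particular it is an $\RR$-algebra automorphism that commutes with real differentiation. Because $\Gamma_{\uy} = - \sum_{j<k} e_j e_k (y_j \partial_{y_k} - y_k \partial_{y_j})$ involves only the real bivectors $e_j e_k$ and the real operators $y_j \partial_{y_k} - y_k \partial_{y_j}$, it commutes with conjugation, and hence (termwise in the exponential series) $\overline{e^{\pm i \frac{\pi}{2}\Gamma_{\uy}} f} = e^{\mp i \frac{\pi}{2}\Gamma_{\uy}} \overline{f}$. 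Since $\langle \ux, \uy \rangle$ is real, $\overline{e^{-i\langle \ux,\uy\rangle}} = e^{i\langle \ux,\uy\rangle}$, so
\[
\overline{K_\pm(x,y)} = \overline{e^{\mp i \frac{\pi}{2}\Gamma_{\uy}} e^{-i\langle \ux,\uy\rangle}} = e^{\pm i \frac{\pi}{2}\Gamma_{\uy}} e^{i\langle \ux,\uy\rangle} = \widetilde{K_\pm}(x,y),
\]
which is the second identity.

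For the first identity I would introduce the parity operator $P$, $(Pf)(y) = f(-y)$, and observe that each angular momentum operator $y_j\partial_{y_k} - y_k\partial_{y_j}$ is even, so $P$ commutes with $\Gamma_{\uy}$ and with $e^{\pm i\frac{\pi}{2}\Gamma_{\uy}}$. Viewing $K_-(x,\cdot)$ as the function $y \mapsto [e^{i\frac{\pi}{2}\Gamma_{\uy}} e^{-i\langle \ux,\cdot\rangle}](y)$, evaluation at $-y$ is exactly the left-action of $P$; commuting $P$ past the exponential and using $P e^{-i\langle \ux,\cdot\rangle} = e^{i\langle \ux,\cdot\rangle}$ gives
\[
K_-(x,-y) = e^{i\frac{\pi}{2}\Gamma_{\uy}} e^{i\langle \ux,\uy\rangle} = \widetilde{K_+}(x,y).
\]
Conjugating and invoking the second identity $\widetilde{K_+} = \overline{K_+}$ then yields $\overline{K_-(x,-y)} = \overline{\widetilde{K_+}(x,y)} = K_+(x,y)$.

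For the final assertion I would read the answer off the series of \thmref{seriesthm}. When $m$ is even, $\lambda = (m-2)/2$ is a non-negative integer, so $i^{2\lambda+2} = i^m = (-1)^{m/2}$ is real; the remaining factors $z^{-\lambda}$, $J_{k+\lambda}(z)$, $C_k^\lambda(w)$ together with its $w$-derivative, and the constants $2^{\lambda-1}\Gamma(\lambda)$ are all real, so $A_\lambda, B_\lambda, C_\lambda$ are real-valued. As $\ux\wedge\uy$ is a real bivector, every Clifford component of $K_-(x,y) = A_\lambda + B_\lambda + (\ux\wedge\uy)C_\lambda$ is real, i.e.\ $K_-$ is fixed by conjugation. (Equivalently, $\overline{K_-} = \widetilde{K_-}$ by the second identity, and the series shows this equals $K_-$ precisely when $i^m$ is real, which fails for odd $m$.)

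The one place demanding care is the operator bookkeeping: I must check that conjugation and parity commute with the \emph{operator} $e^{\pm i\frac{\pi}{2}\Gamma_{\uy}}$, not merely with $\Gamma_{\uy}$ acting on a fixed polynomial, and that "evaluate the transformed function at $-y$" is correctly rendered as the left-action of $P$. Both reduce to the reality and parity-evenness of the generators of $\Gamma_{\uy}$ (and the convergence justifying termwise conjugation, which is already available since the kernel is defined by the series of \thmref{seriesthm}); once those are in place the three identities fall out with no further analysis.
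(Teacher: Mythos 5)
Your argument is correct, but it takes a genuinely different route from the paper's. The paper disposes of this proposition with the remark that one reruns the derivation of Theorem~\ref{seriesthm} for $K_+$ and $\widetilde{K_\pm}$ --- that is, one redoes the monogenic decomposition of Lemma~\ref{actionGamma} with the eigenvalues $i^{k}$ and $i^{-(k+m-2)}$ in place of $i^{-k}$ and $i^{k+m-2}$, obtains three more series of the type $A_\lambda+B_\lambda+(\ux\wedge\uy)C_\lambda$, and compares coefficients termwise. You instead derive all three identities from two symmetries of the defining operator: complex conjugation and the parity map $y\mapsto -y$ both commute with $\Gamma_{\uy}$, because its coefficients are real bivectors times parity-even angular derivatives, while they flip the sign of $\pm i$ and of $\langle \ux,\uy\rangle$ respectively. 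This is shorter and more conceptual; it explains \emph{why} the relations hold rather than exhibiting them as coincidences of series. The one point you rightly flag --- that conjugation and parity must commute with the operator exponential as applied to the plane wave, not merely with $\Gamma_{\uy}$ on a fixed polynomial --- is cleanly justified by the very decomposition underlying Theorem~\ref{seriesthm}: the components $F_k$ and $G_k$ of Lemma~\ref{actionGamma} are real and homogeneous of degree $k$ in $y$, so both symmetries act diagonally on them and commute with the scalar action of $e^{\pm i\frac{\pi}{2}\Gamma_{\uy}}$ on each, and the series converges well enough to pass conjugation and parity through the sum. What the paper's approach buys is the explicit series for $K_+$ and $\widetilde{K_\pm}$ as a byproduct (useful if one ever wants them directly); what yours buys is brevity and transparency. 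Your reading of the final reality claim for even $m$ off the series of Theorem~\ref{seriesthm} (all coefficients real once $i^m=(-1)^{m/2}$) is exactly what the paper intends.
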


As a consequence, it suffices to work with $K_-(x,y)$, which will be determined explicitly in the following section. 

\section{Explicit representation of the kernel}
\setcounter{equation}{0}
\label{ExplicitCF}

We determine the explicit formula of $K_-(x,y)$ on $\RR^m$. It turns out that there is a distinct difference between $m$ being even and $m$ being odd. 
\subsection{The case $m$ even}
\label{CFeven}

\subsubsection{The case $m=2$}

In this case $\lambda =0$. We need the well-known relation \cite[(4.7.8)]{Sz} 
$$
\lim_{\lambda \rightarrow 0}  \lambda^{-1} C_n^\lambda (w) = (2/n) \cos n \theta, \quad w = \cos \theta, \quad n \geq 1.   
$$
Then it is easy to deduce that $A_0(w,z) =0$ and
\begin{align*}
 B_0(w,z) & = J_0(z) + 2 \sum_{k=1}^\infty J_{2k}(z) \cos (2k \theta), \\
 C_0(w,z) & = \frac{2}{z} \sum_{k=1}^\infty J_{2k-1}(z)  \frac{\sin (2k -1)\theta}{\sin \theta}, 
\end{align*}
where in the last series we have used the fact that $\frac{d}{d w} \cos n \theta = n \sin n \theta/ \sin \theta$.
By \cite[p. 7, formula (26)]{Er}, we have
$$
   e^{i z \sin \theta}= J_0(z) + 2 \sum_{k=1}^\infty J_{2k}(z) \cos (2k \theta) + 
     2 i \sum_{k=1}^\infty J_{2k-1}(z)  \frac{\sin (2k -1)\theta}{\sin \theta}.
$$
Taking real and imaginary parts gives 
$$     
     B_0(w,z)   =  \cos (z \sin \theta) \quad \hbox{and}\quad C_0(w,z) = \frac{\sin (z \sin \theta)}{z \sin \theta}. 
$$

Hence we have obtained the following
\begin{thm}[Clifford-Fourier kernel, $m=2$]
The kernel of the Clifford-Fourier kernel is given by
\[
K_-(x,y) = e^{i \frac{\pi}{2}\Gamma_{\uy}} e^{-i \langle \ux,\uy \rangle} = \cos t + \left(\ux \wedge \uy \right) \frac{\sin t}{t}
\]
with $t= |\ux \wedge \uy| =   \sqrt{|\ux|^{2} |\uy|^{2}-\langle \ux, \uy \rangle^2}$.
\label{KernelDim2}
\end{thm}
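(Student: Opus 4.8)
The plan is to specialize the general series representation of Theorem~\ref{seriesthm} to the degenerate parameter $\lambda = 0$ (which is exactly $m=2$), carefully extract the resulting limits of the Gegenbauer terms, and then recognize the two surviving Bessel series as the real and imaginary parts of a single exponential through the Jacobi--Anger expansion.

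First I would start from $K_-(x,y) = A_\lambda + B_\lambda + (\ux \wedge \uy)C_\lambda$ and pass to the limit $\lambda \to 0$ summand by summand. The essential input is the degenerate behaviour $\lim_{\lambda \to 0}\lambda^{-1}C_n^\lambda(\cos\theta) = (2/n)\cos n\theta$ for $n \ge 1$, together with $C_0^\lambda \equiv 1$. The delicate point, and the only real obstacle, is that in $B_\lambda$ and $C_\lambda$ the prefactor $\Gamma(\lambda)$ has a simple pole at $\lambda = 0$, while the Gegenbauer factors $C_n^\lambda$ (respectively $\frac{d}{dw}C_n^\lambda$) vanish to first order there; one must balance the two using $\lambda\Gamma(\lambda) = \Gamma(\lambda+1) \to 1$ to obtain a finite limit. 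Interchanging this limit with the infinite sum is justified by the rapid decay of $J_{k+\lambda}(z)$ in $k$ for fixed $z$, giving locally uniform convergence.

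Carrying this out, I expect $A_0 = 0$: each summand carries the factor $i^2 + (-1)^k = -1 + (-1)^k$, which vanishes for even $k$, while for odd $k$ the surviving coefficient multiplies $C_k^0(w) = 0$. For the other two functions the pole of $\Gamma(\lambda)$ is cancelled, leaving
\begin{align*}
B_0(w,z) &= J_0(z) + 2\sum_{k=1}^\infty J_{2k}(z)\cos(2k\theta),\\
C_0(w,z) &= \frac{2}{z}\sum_{k=1}^\infty J_{2k-1}(z)\frac{\sin(2k-1)\theta}{\sin\theta},
\end{align*}
where $w = \cos\theta$ and the factor $1/\sin\theta$ in $C_0$ comes from $\frac{d}{dw}\cos n\theta = n\sin n\theta/\sin\theta$. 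I would then invoke the Jacobi--Anger formula
\[
e^{iz\sin\theta} = J_0(z) + 2\sum_{k=1}^\infty J_{2k}(z)\cos(2k\theta) + 2i\sum_{k=1}^\infty J_{2k-1}(z)\frac{\sin(2k-1)\theta}{\sin\theta},
\]
reading off $B_0 = \cos(z\sin\theta)$ as the real part, while the imaginary part $\sin(z\sin\theta)$ equals $z\sin\theta\,C_0$, so that $C_0 = \sin(z\sin\theta)/(z\sin\theta)$.

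Finally I would translate back to the Clifford variables. Since $z = |\ux||\uy|$ and $\cos\theta = \langle\xi,\eta\rangle = \langle\ux,\uy\rangle/(|\ux||\uy|)$, the computation of $(\ux\wedge\uy)^2$ in Section~\ref{prelim} gives $z\sin\theta = \sqrt{|\ux|^2|\uy|^2 - \langle\ux,\uy\rangle^2} = |\ux\wedge\uy| = t$. Substituting into $K_- = A_0 + B_0 + (\ux\wedge\uy)C_0 = \cos t + (\ux\wedge\uy)\frac{\sin t}{t}$ then yields the stated formula; all steps beyond the $\lambda\to 0$ bookkeeping are direct substitution and the identification of a classical generating function.
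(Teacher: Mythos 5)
Your proposal is correct and follows essentially the same route as the paper: specialize the series of Theorem~\ref{seriesthm} at $\lambda=0$ using $\lim_{\lambda\to 0}\lambda^{-1}C_n^\lambda(\cos\theta)=(2/n)\cos n\theta$, obtain $A_0=0$ and the two trigonometric Bessel series for $B_0$ and $C_0$, and identify them via the Jacobi--Anger expansion with $\cos(z\sin\theta)$ and $\sin(z\sin\theta)/(z\sin\theta)$, where $z\sin\theta=t$. Your extra care with the cancellation of the $\Gamma(\lambda)$ pole against the vanishing of $C_n^\lambda$ is exactly the bookkeeping the paper leaves implicit.
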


Note that the same result has also been obtained in \cite{MR2283868}, although the proof given there is completely different.

\subsubsection{The case $m>2$}

In the case $m=4$ (or $\lambda=1$) we have
\begin{align*}
A_{1}(w,z) & =  2   z^{-1} \sum_{k=0}^{\infty}  J_{2k+ 1}(z) \,  C_{2k}^{1}(w),\\
B_{1}(w,z) &= -  2  z^{-1} \sum_{k=0}^{\infty}(2k+ 2)  J_{2k+2}(z) \, C_{2k+1}^{1}(w).
\end{align*}
Using $ 2 v J_{v}(z) =z(J_{v-1}(z) + J_{v+1}(z))$ and $2 w C^{1}_{k}(w) = C^{1}_{k-1}(w) + C^{1}_{k+1}(w)  $ we can rewrite $B_{1}$ as 
\begin{align*}
B_{1}(w,z)&= -  \sum_{k=0}^{\infty}( J_{2k+1}(z) + J_{2k+3}(z)) \; C_{2k+1}^{1}(w)\\
&=-  \sum_{k=0}^{\infty} J_{2k+1}(z)\, C_{2k+1}^{1}(w) -  \sum_{k=0}^{\infty}  J_{2k+3}(z) \, C_{2k+1}^{1}(w)\\
&= -  \sum_{k=0}^{\infty} J_{2k+1}(z)\, \left( C_{2k+1}^{1}(w) + C_{2k-1}^{1}(w)\right) \\
&= - 2w \sum_{k=0}^{\infty} J_{2k+1}(z) \, C_{2k}^{1}(w)\\
&= - z w A_{1}(w,z).
\end{align*}

Using the recursion relations for $A$, $B$ and $C$ obtained in Lemma \ref{RecursionsProps} we subsequently have
\begin{align*}
A_{k} =&\, (-1)^{k-1} \, k \,  z^{-k-1} \partial_{w}^{k-1} \left( w^{-1}\partial_{w} B_{0} \right),\\
B_{k} =&\, (-1)^{k} z^{-k} \partial_{w}^{k} B_{0}, \\
C_{k} =&\, (-1)^{k} z^{-k-2} \partial_{w}^{k} \left( w^{-1}\partial_{w} B_{0} \right).
\end{align*}

As we have already calculated $B_{0}$ in the previous subsection, we have hence obtained the following theorem.

\begin{thm} \label{thm:even_m}
The kernel of the Clifford-Fourier transform in even dimension $m \geq 2$ is given by
\begin{align*}
K_-(x,y) & = e^{i \frac{\pi}{2}\Gamma_{\uy}} e^{-i \langle \ux,\uy \rangle} \\
& = A_{(m-2)/2}(z,w) +B_{(m-2)/2}(z,w)+ (\ux \wedge \uy) \ C_{(m-2)/2}(z,w),
\end{align*}
where  $z = |\ux||\uy|$, $w = \langle \xi,\eta \rangle$ and
\begin{align*}
A_{(m-2)/2}(z,w) &=(-1)^{m/2} \, \frac{m-2}{2} \,  z^{-\frac{m}{2}-2} \partial_{w}^{\frac{m}{2}-2} \left( w^{-1}\partial_{w} \cos (z \sqrt{1-w^{2}}) \right),\\
B_{(m-2)/2}(z,w)&= -(-1)^{m/2} z^{1-\frac{m}{2}} \partial_{w}^{\frac{m}{2}-1} \cos (z \sqrt{1-w^{2}}),\\
C_{(m-2)/2}(z,w) &= -(-1)^{m/2} z^{-1-\frac{m}{2}} \partial_{w}^{\frac{m}{2}-1} \left( w^{-1}\partial_{w} \cos (z \sqrt{1-w^{2}}) \right).
\end{align*}
\end{thm}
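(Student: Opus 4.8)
The plan is to read \thmref{thm:even_m} as the terminus of the reduction already in place: the recursions of \lemref{RecursionsProps} let me write each of $A_\lambda$, $B_\lambda$, $C_\lambda$ as an explicit differential operator in $w$ applied to the \emph{single} radial function $B_0$, after which I substitute the closed form of $B_0$ from the $m=2$ case and set $\lambda=(m-2)/2$. Thus the work divides into two independent tasks: (i) integrate the recursions into closed form, and (ii) insert $B_0(w,z)=\cos(z\sqrt{1-w^{2}})$ into the decomposition $K_-(x,y)=A_\lambda+B_\lambda+(\ux\wedge\uy)C_\lambda$ of \thmref{seriesthm}.

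For task (i) the $B$-recursion $B_\lambda=-z^{-1}\partial_{w}B_{\lambda-1}$ iterates at once; since $\partial_{w}$ ignores $z$, one gets $B_\lambda=(-z^{-1}\partial_{w})^{\lambda}B_0=(-1)^{\lambda}z^{-\lambda}\partial_{w}^{\lambda}B_0$. The $A$-recursion $A_\lambda=-\frac{\lambda}{\lambda-1}z^{-1}\partial_{w}A_{\lambda-1}$ cannot be iterated directly because its coefficient is singular at $\lambda=1$, so it does not bridge $A_0=0$ to $A_1$. The clean device is to rescale: set $\tilde A_\lambda:=A_\lambda/\lambda$, so that the offending factor cancels and $\tilde A_\lambda=-z^{-1}\partial_{w}\tilde A_{\lambda-1}$ for $\lambda\ge2$, exactly the $B$-type recursion. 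Iterating down to $\tilde A_1=A_1$ yields $A_\lambda=\lambda\tilde A_\lambda=(-1)^{\lambda-1}\lambda\,z^{-(\lambda-1)}\partial_{w}^{\lambda-1}A_1$, and then $C_\lambda=-\frac{1}{\lambda z}\partial_{w}A_\lambda$ converts the $A$-formula into the $C$-formula at the cost of one extra $w$-derivative and a factor $z^{-1}$.

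The genuine obstacle is the base value $A_1$, which seeds both the $A$- and the $C$-towers. Here I would carry out the $m=4$ computation: starting from the series for $A_1$ and $B_1$ in \thmref{seriesthm}, apply the Bessel recurrence $2\nu J_\nu(z)=z(J_{\nu-1}(z)+J_{\nu+1}(z))$ to strip the factor $(2k+2)$ from the $B_1$-series, then use the Gegenbauer three-term relation $2wC^{1}_{k}(w)=C^{1}_{k-1}(w)+C^{1}_{k+1}(w)$ to combine neighbouring terms into $B_1=-zw\,A_1$. Together with $B_1=-z^{-1}\partial_{w}B_0$ this gives the base case $A_1=z^{-2}\,w^{-1}\partial_{w}B_0$. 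Feeding it back produces $A_\lambda=(-1)^{\lambda-1}\lambda\,z^{-\lambda-1}\partial_{w}^{\lambda-1}(w^{-1}\partial_{w}B_0)$ and $C_\lambda=(-1)^{\lambda}z^{-\lambda-2}\partial_{w}^{\lambda}(w^{-1}\partial_{w}B_0)$.

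It remains to specialize $\lambda=(m-2)/2$ and to substitute $B_0$. From the $m=2$ subsection one has $B_0(w,z)=\cos(z\sqrt{1-w^{2}})$ (writing $w=\cos\theta$, so $\sqrt{1-w^{2}}=\sin\theta$), which is the $\cos t$ term of \thmref{KernelDim2} with $t=z\sqrt{1-w^{2}}=|\ux\wedge\uy|$. Plugging this into the three closed forms and collecting gives the stated expressions for $B_{(m-2)/2}$ and $C_{(m-2)/2}$ and, for the $A$-term, $(-1)^{m/2}\frac{m-2}{2}\,z^{-m/2}\partial_{w}^{m/2-2}(w^{-1}\partial_{w}\cos(z\sqrt{1-w^{2}}))$; the exponent $z^{-\lambda-1}=z^{-m/2}$ is confirmed by the direct value $A_1=z^{-2}w^{-1}\partial_{w}B_0$ at $m=4$. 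Since \lemref{RecursionsProps} is already established, the only analytic subtlety (term-by-term differentiation of the Bessel--Gegenbauer series) is inherited from its proof and needs no separate treatment.
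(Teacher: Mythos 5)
Your proposal is correct and follows essentially the same route as the paper: establish $B_{1}=-zw\,A_{1}$ at $m=4$ via the Bessel recurrence and the Gegenbauer three-term relation, iterate the recursions of \lemref{RecursionsProps} down to $B_{0}=\cos(z\sqrt{1-w^{2}})$ from the $m=2$ subsection, and substitute; your rescaling $\tilde A_{\lambda}=A_{\lambda}/\lambda$ is just an explicit form of the telescoping product $\prod_{j=2}^{\lambda}\frac{j}{j-1}=\lambda$ that the paper leaves implicit when it iterates the $A$-recursion from $A_{1}$ rather than from $A_{0}=0$. Note also that the exponent $z^{-m/2}$ you obtain in the $A$-term is the correct one --- it agrees with $A_{1}=z^{-2}w^{-1}\partial_{w}B_{0}$ at $m=4$ and with Theorem \ref{EvenExplicit} --- so the factor $z^{-\frac{m}{2}-2}$ in the statement should be read as $z^{-\frac{m}{2}}$.
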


By using variables $s = z w$ and $t = z \sqrt{1-w^2}$, we can carry out the differentiation explicitely. The
result is a completely explicit formula for the kernel of the Clifford-Fourier transform in terms of a finite
sum of Bessel functions. This is the subject of the following theorem.

\begin{thm}[$m$ even]  \label{EvenExplicit}
The kernel of the Clifford-Fourier transform in even dimension $m>2$ is given by
\begin{align*}
K_-(x,y) &= e^{i \frac{\pi}{2}\Gamma_{\uy}} e^{-i \langle \ux,\uy \rangle}\\
& = (-1)^{\frac{m}{2}} \left(\frac{\pi}{2} \right)^{\frac{1}{2}} \left(A_{(m-2)/2}^{*}(s,t) +B_{(m-2)/2}^{*}(s,t)+ (\ux \wedge \uy) \ C_{(m-2)/2}^{*}(s,t)\right)
\end{align*}
where $s=\langle \ux,\uy \rangle$ and $t= |\ux \wedge \uy| =   \sqrt{|\ux|^{2} |\uy|^{2}-s^2}$ and
\begin{align*}
A_{(m-2)/2}^{*}(s,t) &=\sum_{\ell=0}^{\left\lfloor \frac{m}{4}-\frac{3}{4}\right\rfloor}  s^{m/2-2-2\ell} \ \frac{1}{2^{\ell} \ell!} \ \frac{\Gamma \left( \frac{m}{2} \right) }{ \Gamma \left( \frac{m}{2}-2\ell -1 \right)} \widetilde{J}_{(m-2\ell-3)/2}(t),\\
B_{(m-2)/2}^{*}(s,t)&= - \sum_{\ell=0}^{\left\lfloor \frac{m}{4}-\frac{1}{2}\right\rfloor} s^{m/2-1-2\ell} \ \frac{1}{2^{\ell} \ell!} \ \frac{\Gamma \left( \frac{m}{2} \right)}{\Gamma \left( \frac{m}{2}-2\ell \right)} \widetilde{J}_{(m-2\ell-3)/2}(t),\\
C_{(m-2)/2}^{*}(s,t) &= - \sum_{\ell=0}^{\left\lfloor \frac{m}{4}-\frac{1}{2}\right\rfloor} s^{m/2-1-2\ell} \ \frac{1}{2^{\ell} \ell!} \ \frac{\Gamma \left( \frac{m}{2} \right)}{\Gamma \left( \frac{m}{2}-2\ell \right)}  \widetilde{J}_{(m-2\ell-1)/2}(t)
\end{align*}
with $\widetilde{J}_{\alpha}(t) = t^{-\alpha} J_{\alpha}(t)$.
\end{thm}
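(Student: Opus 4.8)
The plan is to start from the representation in Theorem~\ref{thm:even_m} and to carry out the iterated $\partial_{w}$-differentiations explicitly after passing to the variables $s = zw = \la \ux,\uy\ra$ and $t = z\sqrt{1-w^{2}} = |\ux \wedge \uy|$, with $z=|\ux||\uy|$ held fixed. The point of these variables is that $z^{2}=s^{2}+t^{2}$, that $\partial_{w}s = z$, and that differentiating $t^{2}=z^{2}(1-w^{2})$ gives $\partial_{w}t = -z^{2}w/t = -zs/t$. First I would record the two Bessel facts that drive everything: $\cos t = (\pi/2)^{1/2}\,\widetilde{J}_{-1/2}(t)$, which is $J_{-1/2}(t)=(2/\pi t)^{1/2}\cos t$ rewritten via $\widetilde{J}_{\alpha}(t)=t^{-\alpha}J_{\alpha}(t)$, and the differentiation rule $\tfrac{d}{dt}\widetilde{J}_{\alpha}(t) = -t\,\widetilde{J}_{\alpha+1}(t)$, which is the standard recurrence $\tfrac{d}{dt}[t^{-\alpha}J_{\alpha}(t)] = -t^{-\alpha}J_{\alpha+1}(t)$.

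The computational engine is the single identity
\[
\partial_{w}\bigl(s^{a}\,\widetilde{J}_{\alpha}(t)\bigr) = z\bigl(a\,s^{a-1}\widetilde{J}_{\alpha}(t) + s^{a+1}\widetilde{J}_{\alpha+1}(t)\bigr),
\]
obtained from the product rule together with $\partial_{w}s=z$, $\partial_{w}t=-zs/t$ and the rule above, the factor $t$ from $\widetilde{J}_{\alpha}'$ cancelling the $1/t$ in $\partial_{w}t$. Iterating it starting from $\cos t = (\pi/2)^{1/2}\widetilde{J}_{-1/2}(t)$ shows that $\partial_{w}^{n}\widetilde{J}_{\beta}(t) = z^{n}\sum_{j}c_{n,j}\,s^{2j-n}\,\widetilde{J}_{\beta+j}(t)$ with coefficients independent of $\beta$ satisfying $c_{n+1,j} = (2j-n)c_{n,j} + c_{n,j-1}$ and $c_{0,0}=1$. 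A short induction then yields the closed form $c_{n,j} = \frac{n!}{2^{\,n-j}(n-j)!\,(2j-n)!}$, understood to vanish unless $n/2 \le j \le n$; in particular only nonnegative powers of $s$ occur, and the vanishing range of $c_{n,j}$ is precisely what will force the floor upper limits in the statement.

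It remains to feed this into the three formulas of Theorem~\ref{thm:even_m}. For $B_{(m-2)/2}$ this is immediate with $\beta=-1/2$ and $n=m/2-1$. For $A$ and $C$ one first simplifies the inner factor: since $\partial_{w}\cos t = (\pi/2)^{1/2}\,zs\,\widetilde{J}_{1/2}(t)$ and $w^{-1}=z/s$, the awkward $w^{-1}$ cancels the $s$ and leaves the clean radial expression $w^{-1}\partial_{w}\cos t = (\pi/2)^{1/2}z^{2}\,\widetilde{J}_{1/2}(t)$, to which the iteration applies with $\beta=1/2$ and $n=m/2-1$ for $C$, $n=m/2-2$ for $A$. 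In each case the explicit powers of $z$ cancel, leaving a function of $s$ and $t$ alone, and reindexing by $\ell=(m/2-1)-j$ (respectively $\ell=(m/2-2)-j$ for $A$) turns $\widetilde{J}_{\beta+j}$ and $s^{2j-n}$ into the Bessel indices and powers of the statement. Finally one matches coefficients: $c_{n,j}$ becomes $\frac{1}{2^{\ell}\ell!}\frac{\Gamma(m/2)}{\Gamma(m/2-2\ell)}$ after writing $(m/2-1)!=\Gamma(m/2)$, and for $A$ the prefactor $\frac{m-2}{2}$ is absorbed via $\frac{m-2}{2}(m/2-2)! = (m/2-1)!$, producing $\frac{1}{2^{\ell}\ell!}\frac{\Gamma(m/2)}{\Gamma(m/2-2\ell-1)}$.

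The genuine work, and the most error-prone step, is the bookkeeping in this last stage: solving the recursion for $c_{n,j}$ in closed form, tracking the two index shifts $\beta=\mp 1/2$ alongside the extra $w^{-1}\partial_{w}$ appearing in $A$ and $C$, and matching the resulting factorials against the Gamma-quotients and floor functions in the statement. The derivative identity itself, once guessed, is a one-line verification, and it is what makes the whole iteration collapse to a finite sum of Bessel functions.
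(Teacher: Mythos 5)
Your proposal is correct and rests on the same mechanism as the paper's proof: both start from the iterated $\partial_w$-representation of Theorem~\ref{thm:even_m} and use the identities $z^{-1}\partial_w \widetilde{J}_{\alpha}(t)=s\,\widetilde{J}_{\alpha+1}(t)$ and $z^{-1}\partial_w s^{a}=a s^{a-1}$ to collapse everything to finite Bessel sums. The only organizational difference is that the paper verifies the stated formulas by induction on $m$ via Lemma~\ref{RecursionsProps}, whereas you derive them directly by solving the coefficient recursion $c_{n+1,j}=(2j-n)c_{n,j}+c_{n,j-1}$ in closed form; your index bookkeeping, Gamma-quotients and floor limits all check out.
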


\begin{proof}
The proof is carried out by induction. It is easy to check that the formulas are correct for $m=4$. Using Lemma \ref{RecursionsProps}, we need to prove that e.g. $A^{*}_{m/2} = \frac{m/2}{m/2 -1} z^{-1} \partial_{w}A^{*}_{m/2-1}$. Indeed, we check this for the case $m= 4 p$. Then, using the properties
\begin{eqnarray*}
z^{-1} \partial_{w} \widetilde{J}_{\alpha}(t) = s \widetilde{J}_{\alpha+1}(t) \quad \hbox{and} \quad 
z^{-1} \partial_{w} s^{\alpha} = \alpha s^{\alpha-1}
\end{eqnarray*}
we can calculate
\begin{align*}
z^{-1} \partial_{w}A^{*}_{2p-1} = &\,  z^{-1} \partial_{w} \sum_{\ell=0}^{ p-1} s^{2p-2-2\ell} \ \frac{1}{2^{\ell} \ell!} \ \frac{\Gamma \left( 2 p \right)}{\Gamma \left( 2p-2\ell -1 \right)} \widetilde{J}_{(4p-2\ell-3)/2}(t)\\
= &\, \sum_{\ell=0}^{ p-1} s^{2p-1-2\ell} \ \frac{1}{2^{\ell} \ell!} \ \frac{\Gamma \left( 2 p \right)}{\Gamma \left( 2p-2\ell -1 \right)} \widetilde{J}_{(4p-2\ell-1)/2}(t)\\
&+ \sum_{\ell=0}^{ p-2} s^{2p-3-2\ell} \ \frac{1}{2^{\ell} \ell!} \ \frac{ \Gamma \left( 2 p \right)}{\Gamma \left( 2p-2\ell -2 \right)} \widetilde{J}_{(4p-2\ell-3)/2}(t)\\
= &\, (2p-1) s^{2p-1}\widetilde{J}_{(4p-1)/2}(t)\\
& + \sum_{\ell=1}^{ p-1} s^{2p-1-2\ell}\widetilde{J}_{(4p-2\ell-1)/2}(t) \ \frac{1}{2^{\ell} \ell!} \ \frac{\Gamma \left( 2 p \right)}{\Gamma \left( 2p-2\ell  \right)} (2p  -1) \\
= &\, \frac{2p-1}{2p} A^{*}_{2p}.
\end{align*}
The other cases are treated similarly.
\end{proof}

\subsection{The case $m$ odd}
\label{CFodd}

Using Lemma \ref{RecursionsProps} we immediately obtain an analog of Theorem \ref{thm:even_m}.

\begin{thm}\label{KernelOdd}
The kernel of the Clifford-Fourier transform in odd dimension $m \geq 3$ is given by
\[
K_-(x,y) =  A_{(m-2)/2}(z,w) +B_{(m-2)/2}(z,w)+ (\ux \wedge \uy) \ C_{(m-2)/2}(z,w)
\]
where  $z = |\ux||\uy|$ and $w = \langle \xi,\eta \rangle$ and
\begin{align*}
A_{(m-2)/2}(z,w) &= (-1)^{(m-3)/2} \frac{m-2}{2} z^{(3-m)/2} \partial_{w}^{(m-3)/2} A_{1/2}\\
B_{(m-2)/2}(z,w) &=   (-1)^{(m-3)/2} z^{(3-m)/2} \partial_{w}^{(m-3)/2} B_{1/2}\\
C_{(m-2)/2}(z,w) &= (-1)^{(m-1)/2}  z^{(1-m)/2} \partial_{w}^{(m-1)/2}  A_{1/2}.
\end{align*}
\end{thm}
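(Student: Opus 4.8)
The plan is to run the same ladder argument that produced \thmref{thm:even_m}, but seeded at the base dimension $m=3$ (that is, $\lambda=\tfrac12$) instead of $m=2$. The one conceptual point to secure first is that the three relations of \lemref{RecursionsProps},
\[
A_\lambda=-\tfrac{\lambda}{\lambda-1}\,\tfrac1z\,\partial_w A_{\lambda-1},\qquad B_\lambda=-\tfrac1z\,\partial_w B_{\lambda-1},\qquad C_\lambda=-\tfrac1{\lambda z}\,\partial_w A_\lambda,
\]
hold for half-integer $\lambda$, not only for the integer values relevant to even $m$. This is immediate from their proof: it uses nothing beyond the closed series of \thmref{seriesthm} and the Gegenbauer identity $\tfrac{d}{dw}C_k^\lambda(w)=2\lambda\,C_{k-1}^{\lambda+1}(w)$, both of which are meaningful for every $\lambda>0$. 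Hence the recursions may be iterated along the half-integer ladder $\lambda=\tfrac12,\tfrac32,\dots,\tfrac{m-2}{2}$, a total of $\tfrac{m-3}{2}$ steps.

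For the base case $m=3$, \thmref{seriesthm} furnishes $A_{1/2}$, $B_{1/2}$ and $C_{1/2}$ directly. A structural simplification compared with the even case is worth noting: there one had $A_0=0$, which forced the $A$-ladder to be seeded from $B_0$ through the auxiliary identity $B_1=-zwA_1$; here $A_{1/2}\neq0$, so $A_{1/2}$ and $B_{1/2}$ are genuine independent seeds and no special first rung is needed. The $B$-ladder is the cleanest: since $B_\lambda=-\tfrac1z\partial_w B_{\lambda-1}$ carries no $\lambda$-dependent constant, iterating it $\tfrac{m-3}{2}$ times gives at once
\[
B_{(m-2)/2}=(-1)^{(m-3)/2}\,z^{(3-m)/2}\,\partial_w^{(m-3)/2}B_{1/2}.
\]
For the $A$-ladder I would iterate $A_\lambda=-\tfrac{\lambda}{\lambda-1}\tfrac1z\partial_w A_{\lambda-1}$ from $\lambda=\tfrac12$; each step supplies a factor $-\tfrac1z\partial_w$ together with the scalar $-\tfrac{\lambda}{\lambda-1}$, so after $\tfrac{m-3}{2}$ steps one obtains $z^{(3-m)/2}\partial_w^{(m-3)/2}A_{1/2}$ multiplied by a product of these scalars. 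Finally $C_{(m-2)/2}$ is not iterated on its own: I would apply the single same-level identity $C_\lambda=-\tfrac1{\lambda z}\partial_w A_\lambda$ at $\lambda=\tfrac{m-2}{2}$ to the expression just obtained for $A_{(m-2)/2}$. This accounts cleanly for the two features of the $C$-formula in the statement — that it carries one extra derivative $\partial_w^{(m-1)/2}$ and the power $z^{(1-m)/2}$, and that it is written through $A_{1/2}$ rather than through $C_{1/2}$.

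The only genuine computation is the telescoping of the $A$-ladder constant. Writing the step index as $\lambda=\tfrac12+i$, $i=1,\dots,\tfrac{m-3}{2}$, the scalar contributed at step $i$ is $-\tfrac{\lambda}{\lambda-1}=-\tfrac{2i+1}{2i-1}$, so the accumulated factor is $(-1)^{(m-3)/2}\prod_{i=1}^{(m-3)/2}\tfrac{2i+1}{2i-1}$, whose magnitude telescopes to $2\cdot\tfrac{m-3}{2}+1=m-2$. Together with the identity $C_\lambda=-\tfrac1{\lambda z}\partial_w A_\lambda$ at $\lambda=\tfrac{m-2}{2}$, this fixes the prefactor of the $C$-term, and assembling $K_-(x,y)=A_{(m-2)/2}+B_{(m-2)/2}+(\ux\wedge\uy)C_{(m-2)/2}$ via \thmref{seriesthm} yields the explicit form of $K_-$ in odd dimensions.

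I expect the sole delicate point to be precisely this constant bookkeeping over half-integer steps: collapsing $\prod\tfrac{2i+1}{2i-1}$ correctly and tracking the interplay between the prefactors of $A$ and $C$, which differ by the factor $\lambda=\tfrac{m-2}{2}$ introduced by $C_\lambda=-\tfrac1{\lambda z}\partial_w A_\lambda$. Such half-integer telescopings are exactly where a stray factor of $2$ is easiest to introduce, so I would pin the normalization down by verifying the first rung directly from the series of \thmref{seriesthm}: a short computation gives $A_{3/2}=-3\,z^{-1}\partial_w A_{1/2}$ for $m=5$, which is what should then propagate by induction as in the proof of \thmref{thm:even_m}. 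Everything else is a routine repetition of the even-dimensional argument.
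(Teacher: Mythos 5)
Your route is exactly the paper's: the paper's entire proof of this theorem is the single sentence ``Using Lemma \ref{RecursionsProps} we immediately obtain an analog of Theorem \ref{thm:even_m},'' i.e.\ iterate the three recursions down the half-integer ladder from the seed $\lambda=\tfrac12$. You have fleshed out precisely that, and your two preliminary observations are both correct and worth making explicit: the proof of Lemma \ref{RecursionsProps} uses nothing but the series of Theorem \ref{seriesthm} and $\tfrac{d}{dw}C_k^{\lambda}=2\lambda C_{k-1}^{\lambda+1}$, so it is valid for half-integer $\lambda$; and since $A_{1/2}\neq 0$, no analogue of the even-dimensional first rung $B_1=-zwA_1$ is needed.

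There is, however, one point you must confront rather than pass over: your telescoping of the $A$-ladder gives the prefactor
\[
\prod_{i=1}^{(m-3)/2}\frac{2i+1}{2i-1}\;=\;\frac{(m-2)/2}{1/2}\;=\;m-2,
\]
whereas the statement you are proving has $\tfrac{m-2}{2}$. Your own sanity check exposes this: for $m=5$ you (correctly) compute $A_{3/2}=-3\,z^{-1}\partial_w A_{1/2}$ directly from Theorem \ref{seriesthm}, while the displayed formula with $(-1)^{(m-3)/2}\tfrac{m-2}{2}$ would give $-\tfrac32\,z^{-1}\partial_w A_{1/2}$. The same factor of $2$ then propagates to the $C$-term through $C_{\lambda}=-\tfrac{1}{\lambda z}\partial_w A_{\lambda}$ (the printed $A$- and $C$-formulas are mutually consistent, but both differ from what the recursion yields by the factor $2$). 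The discrepancy is easy to diagnose: in the even case the ladder is seeded at $\lambda=1$, so the telescoping denominator is $1$ and the constant is $\lambda=\tfrac{m-2}{2}$; here the seed is $\lambda=\tfrac12$, the denominator is $\tfrac12$, and the constant doubles to $m-2$. So your derivation is sound and in fact corrects a factor-of-$2$ slip in the statement as printed --- but as written your proposal simply asserts that the computation ``yields the explicit form of $K_-$'' without noticing that the constant you derived is not the constant in the theorem. You should either state the corrected constants $A_{(m-2)/2}=(-1)^{(m-3)/2}(m-2)\,z^{(3-m)/2}\partial_w^{(m-3)/2}A_{1/2}$ and $C_{(m-2)/2}=2(-1)^{(m-1)/2}z^{(1-m)/2}\partial_w^{(m-1)/2}A_{1/2}$ explicitly, or explain why a compensating factor appears; as it stands the proof and the claimed conclusion do not match.
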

In other words, it suffices to determine $A_{1/2}$ and $B_{1/2}$, which are given by (see Theorem \ref{seriesthm})
\begin{align*}
A_{1/2}(w,z) &= \left(\frac{\pi}{2z}\right)^{1/2} \frac{1}{2} \sum_{k=0}^{\infty}  (-i + (-1)^{k})  J_{k+ 1/2}(z) \;  C_{k}^{1/2}(w),\\
B_{1/2}(w,z) &=  \left(\frac{\pi}{2z}\right)^{1/2}\sum_{k=0}^{\infty}(k+ 1/2)  (i + (-1)^{k})  J_{k+ 1/2}(z) \; C_{k}^{1/2}(w).
\end{align*}
We can rewrite their sum, using $(-1)^{k} C_{k}^{1/2}(w) =  C_{k}^{1/2}(-w)$, as 
$$
  A_{1/2}(w,z) + B_{1/2}(w,z) = \left(\frac{\pi}{2z}\right)^{1/2} \left( U(w,z) + V(-w,z) + i V(w,z) \right),
$$ 
where 
\begin{align*}
 U(w,z) & =  \sum_{k=0}^\infty (-1)^k J_{k+1/2}(z) P_k(w), \\ 
 V(w,z) &  = \sum_{k=0}^\infty  k J_{k+1/2}(z) P_k(w)
\end{align*}
with $P_{k}(w)= C_{k}^{1/2}(w)$ being the Legendre polynomials of degree $k$. We then obtain the following integral representation for $U$ and $V$.

\begin{lem} \label{OddIntegrals}
With $w= \cos \t$ one has
\begin{align*}
 U(w,z)  & =\left(\frac{z}{2\pi}\right)^{1/2}  \int_{-1}^1
  e^{i z u}  e^{- \frac{z}{2} (1-u^2)\cos \t} J_0\left(\frac{z}{2} (1-u^2) \sin\t \right)du   \\
 V(w,z) &  =  \frac{1}{\sqrt{\pi}} \left(\frac{z}{2}\right)^{3/2} \int_{-1}^1
  e^{i z u}  e^{\frac{z}{2} (1-u^2)\cos \t} (1-u^2) \\
  &\quad  \times \left[  \cos \t J_0\left(\frac{z}{2} (1-u^2) \sin \t \right) - \sin \t J_1\left(\frac{z}{2} (1-u^2) \sin \t \right)
     \right] du. 
\end{align*}
\end{lem}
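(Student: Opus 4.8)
The plan is to feed Poisson's integral representation of the half-integer Bessel functions into the defining series for $U$ and $V$, interchange summation and integration, and then recognize the inner power series as the exponential generating function of the Legendre polynomials. Concretely, I would start from Poisson's formula, which for $\nu = k+1/2$ reads
\[
J_{k+1/2}(z) = \frac{(z/2)^{k+1/2}}{\sqrt{\pi}\, k!}\int_{-1}^1 (1-u^2)^{k}\, e^{izu}\,du,
\]
since $\Gamma(1/2)=\sqrt{\pi}$, $\Gamma(k+1)=k!$ and $\nu-\tfrac12 = k$. Inserting this into $U(w,z)=\sum_{k\ge0}(-1)^k J_{k+1/2}(z) P_k(w)$ and pulling the $u$-integral to the front, each summand becomes $\frac{(-1)^k}{k!}\bigl(\tfrac{z}{2}(1-u^2)\bigr)^{k} P_k(w)$, so the integrand carries the series $\sum_{k\ge0}\frac{(-a)^k}{k!}P_k(w)$ with $a=\tfrac{z}{2}(1-u^2)$.

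Next I would invoke the classical exponential generating function
\[
\sum_{k=0}^{\infty}\frac{t^k}{k!}P_k(w) = e^{t w}\, J_0\!\left(t\sqrt{1-w^2}\right),
\]
which can be verified directly from the recurrence relations of $P_k$ (or simply cited). Taking $t=-a$, writing $w=\cos\theta$, $\sqrt{1-w^2}=\sin\theta$, and using that $J_0$ is even, the inner sum equals $e^{-\frac{z}{2}(1-u^2)\cos\theta} J_0\!\bigl(\tfrac{z}{2}(1-u^2)\sin\theta\bigr)$. Since the remaining constant is $(z/2)^{1/2}/\sqrt{\pi}=(z/(2\pi))^{1/2}$, this reproduces the stated formula for $U$ exactly.

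For $V(w,z)=\sum_{k\ge0} k\,J_{k+1/2}(z)P_k(w)$ the only changes are the extra factor $k$ and the absence of $(-1)^k$, so now $t=+a$. I would obtain the needed sum by differentiating the generating function in the parameter: $\sum_{k} k\frac{a^k}{k!}P_k(w) = a\,\partial_a\bigl[e^{a w}J_0(a\sqrt{1-w^2})\bigr]$. Carrying out the derivative with $J_0'=-J_1$ gives $a\,e^{a w}\bigl[\, w\,J_0(a\sqrt{1-w^2}) - \sqrt{1-w^2}\,J_1(a\sqrt{1-w^2})\,\bigr]$. Substituting $a=\tfrac{z}{2}(1-u^2)$, $w=\cos\theta$, $\sqrt{1-w^2}=\sin\theta$, and collecting the constant $(z/2)^{1/2}/\sqrt{\pi}$ together with the factor $z/2$ coming from $a$ (yielding $\tfrac{1}{\sqrt{\pi}}(z/2)^{3/2}$) produces precisely the claimed expression for $V$.

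The only genuinely analytic point, and hence the step requiring care, is the justification of the interchange of $\sum_k$ with $\int_{-1}^1$ and of the termwise differentiation of the generating series. This is routine but should be stated: because $|P_k(w)|\le 1$ for $w\in[-1,1]$ and $\sum_k (z/2)^k/k!$ converges, the series $\sum_k \frac{1}{k!}\bigl(\tfrac{z}{2}(1-u^2)\bigr)^k P_k(w)$ and its $k$-weighted variant converge absolutely and uniformly in $u\in[-1,1]$, so Fubini's theorem applies and the differentiated series is dominated in the same way. Beyond this convergence bookkeeping, the proof is a direct substitution; I expect no deeper obstacle, only the need to track the two signs and the two constant prefactors correctly.
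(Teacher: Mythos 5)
Your proof is correct and follows essentially the same route as the paper: both insert Poisson's integral representation of $J_{k+1/2}$ into the defining series, interchange sum and integral, and evaluate the inner sum via the exponential generating function $\sum_k P_k(\cos\theta)r^k/k! = e^{r\cos\theta}J_0(r\sin\theta)$ (differentiated in the parameter for $V$). The constants, signs, and the use of the evenness of $J_0$ all match the paper's argument.
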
 

\begin{proof}
We need to recall the generating function for the Legendre polynomials $P_n(w)$
(see \cite[(4.10.7)]{Sz})
\begin{equation}\label{Legendre}
   \sum_{n=0}^\infty \frac{P_n(\cos \t)}{n!} r^n = e^{r \cos \t} J_0(r \sin \t), 
\end{equation}
which implies, upon taking derivative on $r$ and using $J_0'(z) =- J_1(z)$, 
\begin{align}\label{Legendre2}
   \sum_{n=1}^\infty n \frac{P_n(\cos \t)}{n!} r^{n-1} = e^{r \cos \t} \left[ \cos \t J_0(r \sin \t)
       - \sin \t J_1(r\sin \t)\right]. 
\end{align}
We also recall the integral representation 
for the Bessel function (\cite[p. 81]{Er} or \cite[(1.71.6)]{Sz}) 
\begin{equation} \label{Bessel}
   J_\a(z) = \frac{1}{\Gamma(\a+\frac12)\Gamma(\frac12)} \left(\frac{z}{2} \right)^\a \int_{-1}^1 
     e^{i z u} (1-u^2)^{\a-\frac12} du.  
\end{equation}
Now, combining \eqref{Legendre} with \eqref{Bessel}, we obtain readily with $w = \cos \t$, 
\begin{align*}
  \sum_{k=0}^\infty J_{k+1/2}(z) (-1)^k P_k(w) & =  \sqrt{\frac{z}{2\pi}}  \int_{-1}^1
  e^{i z u} \sum_{k=0}^\infty \frac{P_k(\cos \t)}{k!} \left(\frac{- z}{2} (1-u^2)\right)^{k}  du \\   
& =\sqrt{\frac{z}{2\pi}}  \int_{-1}^1
  e^{i z u}  e^{- \frac{z}{2} (1-u^2)\cos \t} J_0\left(\frac{z}{2} (1-u^2) \sin\t \right)du, 
\end{align*}
where we have used $J_0(-z) = J_0(z)$.  Furthermore, using \eqref{Legendre2} we obtain
\begin{align*}
  \sum_{k=0}^\infty & k J_{k+1/2}(z) P_k(w)   =  \left(\frac{z}{2\pi}\right)^{1/2}  \int_{-1}^1
  e^{i z u} \sum_{k=0}^\infty k \frac{P_k(\cos \t)}{k!} \left(\frac{z}{2} (1-u^2)\right)^{k}  du \\   
& =\frac{1}{\sqrt{\pi}} \left(\frac{z}{2}\right)^{3/2} \int_{-1}^1
  e^{i z u}  e^{\frac{z}{2} (1-u^2)\cos \t} (1-u^2) \\
  &\quad  \times \left[  \cos \t J_0\left(\frac{z}{2} (1-u^2) \sin \t \right) - \sin \t J_1\left(\frac{z}{2} (1-u^2)
   \sin \t \right)
     \right] du 
\end{align*}
and complete the proof. 
\end{proof}

\begin{rem}
One obvious question is whether it is possible to write $U$ and $V$ in terms of sums in Bessel functions, as
in Theorem \ref{EvenExplicit}, but we do not know if this is possible. Moreover, the integral formulas in the lemma do not seem to yield an upper bound, as in the subsequent Lemma \ref{lem:bound}, of the kernel $K(x,y)$ for odd $m$.  
\end{rem}

\section{Further properties of the kernel function}
\label{secFurtherprops}
\setcounter{equation}{0}

From the explicit expression for the kernel of the Clifford-Fourier transform, we can derive several properties of the kernel. We start with a simple observation. 

\begin{prop} \label{KernelMultiplication}
Let $m=2$. Then the kernel of the Clifford-Fourier transform satisfies
\[
K_-(x,z) K_-(y,z) = K_-(x+y,z).
\]
If the dimension $m$ is even and $m>2$ then 
\[
K_-(x,z) K_-(y,z) \neq K_-(x+y,z).
\]
\end{prop}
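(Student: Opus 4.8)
The plan is to handle the two cases separately, in each case exploiting the explicit kernel already computed. For $m=2$ I would start from Theorem~\ref{KernelDim2}. Writing $I := e_1 e_2$, one has $I^2=-1$, and in two dimensions $\ux \wedge \underline{z} = I\,\tau$ where $\tau := x_1 z_2 - x_2 z_1$ is a \emph{signed} scalar, while $t = |\ux \wedge \underline{z}| = |\tau|$. Since sine is odd and cosine even, $\tau\,\tfrac{\sin|\tau|}{|\tau|} = \sin\tau$ and $\cos|\tau| = \cos\tau$, so the kernel collapses to a genuine complex exponential,
\[
K_-(x,z) = \cos t + (\ux \wedge \underline{z})\frac{\sin t}{t} = \cos\tau + I \sin\tau = e^{I\tau}.
\]
Because $\tau$ is linear in the first argument, $\tau(x+y)=\tau(x)+\tau(y)$, and because all factors lie in the commutative subalgebra spanned by $1$ and $I$ (isomorphic to $\mathbb{C}$), the exponentials multiply additively: $K_-(x,z)K_-(y,z) = e^{I\tau(x)}e^{I\tau(y)} = e^{I\tau(x+y)} = K_-(x+y,z)$. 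This settles the first assertion.

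For even $m>2$ the goal is only to exhibit one configuration that violates multiplicativity. My first instinct would be a grade-counting argument: the product $K_-(x,z)K_-(y,z)$ contains the term $(\ux\wedge\underline{z})(\uy\wedge\underline{z})$, while $K_-(x+y,z)$ from Theorem~\ref{EvenExplicit} carries only scalar and bivector parts. I expect this naive route to be the main obstacle, because it fails: both bivectors share the factor $\underline{z}$, so their grade-$4$ (outer) part $\ux\wedge\underline{z}\wedge\uy\wedge\underline{z}$ vanishes identically, and the product stays in grades $0$ and $2$. One therefore cannot separate the two sides by grade type alone and must instead extract quantitative information from the kernel.

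The workaround is to specialize to \emph{collinear} vectors, say $x=ae_1$, $y=be_1$, $z=e_1$. Then $\ux\wedge\underline{z}=0$ and $t=\sqrt{|\ux|^2|\underline{z}|^2-s^2}=0$, so the bivector part disappears entirely and, by Theorem~\ref{EvenExplicit}, the kernel reduces to the scalar
\[
g(a) := K_-(a e_1, e_1) = (-1)^{m/2}\big(\tfrac{\pi}{2}\big)^{1/2}\big(A^*_{(m-2)/2}(a,0) + B^*_{(m-2)/2}(a,0)\big),
\]
which is a polynomial in $a$ (each $\widetilde{J}_\alpha$ is entire, with $\widetilde{J}_\alpha(0)=1/(2^\alpha\Gamma(\alpha+1))$). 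The highest power of $s$ comes from the $\ell=0$ term of $B^*_{(m-2)/2}$, namely $s^{m/2-1}$ with nonzero coefficient $-\widetilde{J}_{(m-3)/2}(0)$, and it is not cancelled by $A^*_{(m-2)/2}$ (whose top power is $s^{m/2-2}$); hence $g$ has degree exactly $m/2-1\ge 1$ for $m>2$. Finally, a non-constant polynomial cannot satisfy the exponential functional equation $g(a)g(b)=g(a+b)$: comparing the coefficients of $a^{m/2-1}$ for fixed $b$ would force $g(b)\equiv 1$, a contradiction. Thus $K_-(x,z)K_-(y,z)=K_-(x+y,z)$ already fails for these collinear vectors, proving the second assertion. (As a concrete check, for $m=4$ one computes $g(a)=1-a$, and $x=y=z=e_1$ gives $K_-(e_1,e_1)^2=0\ne K_-(2e_1,e_1)=-1$.)
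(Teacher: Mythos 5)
Your proof is correct and follows essentially the same route as the paper: for $m=2$ the kernel is rewritten as $\cos\tau + e_1e_2\sin\tau$ with $\tau$ linear in the first argument, and for even $m>2$ one restricts to collinear vectors where the kernel becomes a scalar polynomial of degree $m/2-1$ in the inner product, so multiplicativity fails by a degree count. Your version is marginally more careful than the paper's (you verify that the leading coefficient, coming from the $\ell=0$ term of $B^*$, does not vanish, and you supply the concrete $m=4$ counterexample), but the underlying argument is the same.
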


\begin{proof}
If $m=2$, then the kernel, given in Theorem \ref{KernelDim2}, can be rewritten as
\begin{equation} \label{ker2simple}
K_-(x,y) = \cos{(x_{1}y_{2}-x_{2}y_{1})} + e_{1}e_{2} \sin{(x_{1}y_{2}-x_{2}y_{1})}. 
\end{equation}
The result then follows using basic trigonometric identities. This result is also obtained in \cite[Proposition 5.1]{MR2283868}.

Now suppose $m$ even and $m>2$. We consider co-ordinates $x_{1}\neq 0$ and $x_{2} =\ldots= x_{m}= 0$ and similar for the $y$ and $z$ variables. Then the explicit formula for the kernel, given in Theorem \ref{EvenExplicit}, reduces to
\[
K_-(x,y) = \sum^{m/2-1}_{j=0} a_{j} (x_{1}y_{1})^{j}, \quad a_{j} \in \mR
\]
and similar for $K_-(y,z)$ and $K_-(x+y,z)$. We readily observe that $K_-(x,z) K_-(y,z)$ is a polynomial of higher degree than $K_-(x+ y,z)$, hence the equality cannot hold.
\end{proof}

\begin{rem}
The result in Proposition \ref{KernelMultiplication} for $m=2$ is not really surprising. As $(e_{1}e_{2})^{2}=-1$, formula (\ref{ker2simple}) implies that, upon substituting $e_{1}e_{2}$ by the imaginary unit $i$, the kernel is equal to the kernel of the classical Fourier transform. This is clearly not the case for higher even dimensions.
\end{rem}

The explicit formula allows us to study the (un)boundedness of the kernel. We start with the following lemma.

\begin{lem} \label{lem:bound}
Let $m$ be even. For $ \ux, \uy \in \RR^m$, there exists a constant $c$ such that
\begin{align*}
|A_{(m-2)/2}^{*}(s,t) + B_{(m-2)/2}^{*}(s,t)| &\leq  c (1 + |\la \ux, \uy \ra |)^{(m-2)/2},  \\
|(x_{j} y_{k} - x_{k}y_{j}) C_{(m-2)/2}^{*}(s,t)| &\leq  c (1 + |\la \ux, \uy \ra |)^{(m-2)/2}, \qquad
 j \neq k. 
\end{align*}
\end{lem}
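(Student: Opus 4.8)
The plan is to bound each of the finite Bessel sums in Theorem~\ref{EvenExplicit} by exploiting the boundedness of the normalized Bessel functions $\widetilde{J}_\alpha(t) = t^{-\alpha}J_\alpha(t)$ together with the estimate \eqref{estimateCterm}. Recall from the standard small-argument behavior that $J_\alpha(t) \sim (t/2)^\alpha/\Gamma(\alpha+1)$ as $t \to 0$, so $\widetilde{J}_\alpha(t)$ extends to a bounded continuous function on $[0,\infty)$ whenever $\alpha \geq 0$ (indeed $|\widetilde{J}_\alpha(t)| \leq C_\alpha$ for all $t \geq 0$, since for large $t$ one has $J_\alpha(t) = O(t^{-1/2})$ and hence $\widetilde{J}_\alpha(t) = O(t^{-\alpha-1/2}) \to 0$). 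The indices appearing in $A^*_{(m-2)/2}$, $B^*_{(m-2)/2}$, $C^*_{(m-2)/2}$ are $(m-2\ell-3)/2$ and $(m-2\ell-1)/2$, which for the relevant range of $\ell$ are nonnegative, so each $\widetilde{J}$ factor is bounded by an absolute constant.

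First I would treat $A^*_{(m-2)/2}(s,t) + B^*_{(m-2)/2}(s,t)$. Since each $\widetilde{J}$ term is bounded and there are only finitely many summands (the number being $O(m)$, independent of $s,t$), the whole expression is controlled by $\sum_\ell c_\ell |s|^{m/2-1-2\ell}$ plus the analogous $A^*$ contribution with powers $|s|^{m/2-2-2\ell}$. The largest power of $|s| = |\langle \ux,\uy\rangle|$ occurring is $m/2-1$, so collecting terms gives a bound of the form $c\sum_{r=0}^{(m-2)/2}|s|^r \leq c'(1+|s|)^{(m-2)/2}$, which is exactly the first claimed inequality after absorbing constants.

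For the second inequality I would handle the prefactor $(x_jy_k - x_ky_j)$ carefully, since $C^*_{(m-2)/2}$ involves $\widetilde{J}_{(m-2\ell-1)/2}(t)$ and the naive bound would lose a factor of $t$. The key trick is to pair the antisymmetric factor with $t$: write $(x_jy_k-x_ky_j) = t\cdot\frac{x_jy_k-x_ky_j}{t}$, where by \eqref{estimateCterm} the quotient $\frac{x_jy_k-x_ky_j}{t}$ (recall $t = \sqrt{|\ux|^2|\uy|^2 - \langle\ux,\uy\rangle^2}$) is bounded by $1$ in absolute value. Then the product $t\,\widetilde{J}_\alpha(t) = t^{1-\alpha}J_\alpha(t) = \widetilde{J}_{\alpha-1}(t)\cdot t^0$; more precisely $t\cdot\widetilde{J}_\alpha(t)$ is itself bounded since it equals $t^{-(\alpha-1)}J_\alpha(t)$, which is $O(1)$ as $t\to 0$ (because $\alpha - 1 \geq 0$ in the relevant index range, or one checks the boundary case directly) and $O(t^{-\alpha+1/2})\to 0$ as $t\to\infty$. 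Thus $|(x_jy_k-x_ky_j)\widetilde{J}_{(m-2\ell-1)/2}(t)|$ is bounded by an absolute constant, and the remaining $|s|$-powers give the same $(1+|s|)^{(m-2)/2}$ bound as before.

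\textbf{The main obstacle} I anticipate is verifying that the indices $(m-2\ell-1)/2 - 1$ and $(m-2\ell-3)/2$ remain nonnegative across the full summation range, particularly for the smallest admissible dimension $m=4$ and the extreme values of $\ell$, since the normalized Bessel function $\widetilde{J}_\alpha$ is only uniformly bounded on $[0,\infty)$ for $\alpha \geq 0$; the near-origin behavior is what forces this constraint. If a boundary index turns out to be $-1/2$ or smaller, one must invoke the explicit half-integer Bessel asymptotics or combine two terms to restore boundedness, rather than bounding term-by-term. Checking these index ranges against the floor bounds $\lfloor m/4 - 3/4\rfloor$ and $\lfloor m/4 - 1/2\rfloor$ is the delicate bookkeeping step; everything else is routine once the uniform boundedness of the relevant $\widetilde{J}$'s and the estimate \eqref{estimateCterm} are in hand.
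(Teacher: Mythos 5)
Your proposal is correct and follows essentially the same route as the paper: bound each $\widetilde{J}_\alpha$ uniformly, sum the finitely many powers of $|s|$, and for the $C^*$ term absorb the antisymmetric prefactor into $t$ via \eqref{estimateCterm} so that one only needs $t\,\widetilde{J}_\alpha(t)=t^{-(\alpha-1)}J_\alpha(t)$ to be bounded. The only (immaterial) difference is that the paper derives the uniform bounds $|z^{-\alpha}J_\alpha(z)|\le c$ and $|z^{-\alpha+1}J_\alpha(z)|\le c$ from the integral representation \eqref{Bessel} and an integration by parts, whereas you use the asymptotics at $0$ and $\infty$; note also that the constraint for boundedness of $t^{1-\alpha}J_\alpha(t)$ comes from $t\to\infty$ (needing $\alpha\ge 1/2$), not from $t\to 0$ as your parenthetical suggests.
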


\begin{proof}
We work with the explicit formula of $K_-(x,y)$ in Theorem 3 and use the integral representation 
for the Bessel function (see formula (\ref{Bessel})). This  implies immediately that 
$$
    |  z^{-\a} J_\a(z) | \le c, \qquad z \in  \RR. 
$$
By the explicit formula of $A^{*}_\l$ and $B^{*}_\l$, it follows readily that 
\begin{align*}
    | A_{(m-2)/2}^{*}(s,t)| & \le c \sum_{\ell =0}^{\lfloor  \frac{m}{4} - \frac34\rfloor} |s|^{m/2-2-2\ell} \le c (1 + |s|)^{m/2-2} \\
    | B_{(m-2)/2}^{*}(s,t)| & \le c \sum_{\ell =0}^{\lfloor  \frac{m}{4} - \frac12\rfloor} |s|^{m/2-1-2\ell} \le c (1 + |s|)^{m/2-1},
\end{align*}
so that $|A_{(m-2)/2}^{*}(s,t) + B_{(m-2)/2}^{*}(s,t)|$ has the desired bound. Furthermore, integrating by parts
in \eqref{Bessel} shows that for $\a > 1/2$,
$$
   J_\a(z) =  \frac{-1}{\Gamma(\a+\frac12)\Gamma(\frac12)} \left(\frac{z}{2} \right)^\a
        \frac{ 2 \a -1 }{iz} \int_{-1}^1 e^{i u z} u (1-u^2)^{\a-3/2} du, 
$$
from which it follows readily that 
$$
       |  z^{-\a + 1} J_\a(z) | \le c, \qquad z \in  \RR. 
$$
Since $t = \sqrt{-(x \wedge y)^{2}}$ and using (\ref{estimateCterm}), it follows then that 
\begin{align*}
 |(x_{j} y_{k} - x_{k}y_{j}) C^{*}_{(m-2)/2}(s,t) | & \le  | t \, C^{*}_{(m-2)/2}(s,t)| \\
   & \le c \sum_{\ell =0}^{\lfloor  \frac{m}{4} - \frac12\rfloor} |s|^{m/2-1-2\ell} 
          \le c (1 + |s|)^{m/2-1}.
\end{align*}
This completes the proof. 
\end{proof}  

Recall that the kernel $K_-(x,y)$ is a Clifford algebra valued function. It can be decomposed as
\begin{equation}
K_-(x,y) = K_{0}^{-}(x,y) + \sum_{i<j} e_{i} e_{j} K_{ij}^{-}(x,y)
\label{DecompKernel}
\end{equation}
with $ K_{0}^{-}(x,y)$ and $K_{ij}^{-}(x,y)$ scalar functions. Now we immediately have the following bounds.

\begin{thm} \label{kernelBound}
Let $m$ be even. For $x,y \in \RR^m$, one has
\begin{align*}
| K_{0}^{-}(x,y)| &\leq  c (1+|\ux|)^{(m-2)/2}(1+|\uy|)^{(m-2)/2},  \\
| K_{ij}^{-}(x,y)| &\leq c (1+|\ux|)^{(m-2)/2}(1+|\uy|)^{(m-2)/2}, \qquad
 j \neq k. 
\end{align*}
\end{thm}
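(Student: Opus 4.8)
The plan is to read off the scalar and bivector parts of $K_-(x,y)$ directly from the explicit formula in Theorem~\ref{EvenExplicit} and then convert the estimates of Lemma~\ref{lem:bound}, which are phrased in terms of $\la \ux, \uy \ra$, into the stated product bounds via Cauchy--Schwarz. (The case $m=2$ is immediate, since by Theorem~\ref{KernelDim2} the kernel $\cos t + (\ux\wedge\uy)\tfrac{\sin t}{t}$ is bounded and the exponent $(m-2)/2$ vanishes, so below I assume $m>2$.)

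First I would observe that $A^*_{(m-2)/2}(s,t)$, $B^*_{(m-2)/2}(s,t)$ and $C^*_{(m-2)/2}(s,t)$ are all scalar-valued: each is a finite combination of powers of $s = \la \ux,\uy\ra$ with coefficients $\widetilde{J}_\alpha(t) = t^{-\alpha}J_\alpha(t)$, and since $J_\alpha(t)/t^\alpha$ is an even function of $t$, these quantities depend only on $t^2 = |\ux|^2|\uy|^2 - s^2$ and hence are genuine scalar functions of $\ux, \uy$. Consequently, in the decomposition \eqref{DecompKernel} the only bivector contribution arises from the term $(\ux\wedge \uy)\,C^*_{(m-2)/2}(s,t)$. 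Writing $\ux \wedge \uy = \sum_{i<j} e_i e_j (x_i y_j - x_j y_i)$, I can identify, up to the fixed constant $(-1)^{m/2}(\pi/2)^{1/2}$,
\[
K_0^-(x,y) = (-1)^{m/2}\left(\tfrac{\pi}{2}\right)^{1/2}\bigl(A^*_{(m-2)/2}(s,t) + B^*_{(m-2)/2}(s,t)\bigr),
\]
while
\[
K_{ij}^-(x,y) = (-1)^{m/2}\left(\tfrac{\pi}{2}\right)^{1/2}(x_i y_j - x_j y_i)\,C^*_{(m-2)/2}(s,t).
\]

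Next I would apply Lemma~\ref{lem:bound} termwise: its first estimate yields $|K_0^-(x,y)| \le c(1 + |\la \ux,\uy\ra|)^{(m-2)/2}$, and its second yields $|K_{ij}^-(x,y)| \le c(1+|\la \ux,\uy\ra|)^{(m-2)/2}$ for each pair $i<j$. It then remains only to pass from $1 + |\la \ux,\uy\ra|$ to the product form. By Cauchy--Schwarz one has $|\la \ux,\uy\ra| \le |\ux|\,|\uy|$, and since $(m-2)/2 \ge 0$, the elementary inequality
\[
1 + |\la \ux,\uy\ra| \le 1 + |\ux||\uy| \le (1+|\ux|)(1+|\uy|)
\]
gives $(1+|\la \ux,\uy\ra|)^{(m-2)/2} \le (1+|\ux|)^{(m-2)/2}(1+|\uy|)^{(m-2)/2}$, which is precisely the asserted bound for both $K_0^-$ and $K_{ij}^-$.

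There is essentially no hard analytic step remaining, as all the delicate work was done in Lemma~\ref{lem:bound}; the one point I would be careful about is the grade bookkeeping in the second paragraph, namely confirming that $C^*_{(m-2)/2}$ is genuinely scalar so that $(\ux\wedge\uy)C^*_{(m-2)/2}$ remains a pure bivector feeding only into the $K_{ij}^-$, and that $A^*_{(m-2)/2} + B^*_{(m-2)/2}$ accounts for the entire scalar part $K_0^-$. Once this identification is in place, the theorem follows immediately.
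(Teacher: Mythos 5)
Your proposal is correct and follows exactly the paper's own (very terse) argument: identify $K_0^-$ and $K_{ij}^-$ from the explicit formula of Theorem~\ref{EvenExplicit}, apply Lemma~\ref{lem:bound}, and finish with $|\la \ux,\uy\ra| \le |\ux|\,|\uy|$ and $1+|\ux|\,|\uy| \le (1+|\ux|)(1+|\uy|)$. The only difference is that you spell out the grade bookkeeping and the $m=2$ case, which the paper leaves implicit.
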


\begin{proof} 
This follows immediately from Theorem \ref{EvenExplicit}, Lemma \ref{lem:bound}, $|\la \ux , \uy \ra| \le |\ux|\cdot |\uy|$ and the elementary inequality 
$1+ | \ux| \cdot |\uy| \le (1+|\ux|) (1+|\uy|)$.  
\end{proof}

Since the integral representation of $J_\a$ also shows that $\wt J_\a(0)$ is a constant, we see that the
order $(m-2)/2$ in the upper bound is sharp. Note that the kernel is bounded if $m =2$, which has 
already been observed in \cite{MR2283868}. 

The bound of the kernel function defines the domain of the Clifford-Fourier transform, see Theorem 
\ref{CFdomain} below. 
In the following section we will prove that $\CF_{\pm}$ maps  $\cS(\RR^m) \otimes \cC l_{0,m}$ continuously to 
$\cS(\RR^m) \otimes \cC l_{0,m}$, for which the following properties of the kernel will be instrumental. 
Recall that $\partial_{\ux}$, defined  in \eqref{DiracOperator}, is the Dirac operator and $\Delta$ is the Laplacian. 

\begin{prop} \label{DiffKernel}
For all $m$, the kernel $K_\pm(x,y)$ satisfies the properties
\begin{align*}
\upy K_\pm(x,y) &= \pm (\mp i)^{m} K^{\mp}(x,y) \ux,\\
\Delta_{y} K_\pm(x,y)&= - |\ux|^{2} K_\pm(x,y)
\end{align*}
and
\begin{align*}
\uy K_\pm(x,y) &= \mp (\mp i)^{m}  \left( K^{\mp} (x,y) \upx \right),\\
\Delta_{x} K_\pm (x,y)&= - |\uy|^{2} K_\pm(x,y),
\end{align*}
where $\Delta_{x}$ means that the Laplacian $\Delta$ is acting on the $x$ variables and with
\[
\left( K^{\mp} (x,y) \upx \right) = \sum_{i=1}^{m} \left(\partial_{x_{i}} K^{\mp} (x,y) \right) e_{i}
\]
the action of the Dirac operator on the right.
\end{prop}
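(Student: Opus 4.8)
The plan is to establish these four identities by exploiting the commutation relations of the Gamma operator with the Dirac operator and the vector variable, working directly from the operator definition $K_\pm(x,y) = e^{\mp i\frac{\pi}{2}\Gamma_{\uy}} e^{-i\langle\ux,\uy\rangle}$ rather than from the explicit Bessel-function formulas. The key observation is the elementary computation $\upy e^{-i\langle\ux,\uy\rangle} = -i\,\ux\, e^{-i\langle\ux,\uy\rangle}$, which follows since $\partial_{y_j} e^{-i\langle\ux,\uy\rangle} = -i x_j e^{-i\langle\ux,\uy\rangle}$ and $\upy = \sum_j e_j \partial_{y_j}$, together with the symmetric statement for $\Delta_y$ giving $\Delta_y e^{-i\langle\ux,\uy\rangle} = -|\ux|^2 e^{-i\langle\ux,\uy\rangle}$. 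The second of these immediately yields $\Delta_y K_\pm(x,y) = -|\ux|^2 K_\pm(x,y)$, because $\Gamma_{\uy}$ (and hence $e^{\mp i\frac{\pi}{2}\Gamma_{\uy}}$) commutes with the rotation-invariant Laplacian $\Delta_y$; this is the easiest of the four.

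For the first-order identity $\upy K_\pm(x,y) = \pm(\mp i)^m K^{\mp}(x,y)\,\ux$, the main work is to understand how $\upy$ commutes past the operator exponential $e^{\mp i\frac{\pi}{2}\Gamma_{\uy}}$. First I would use Theorem~\ref{ospFamily} and the definition $\Gamma_{\uy} = -\uy\upy - \mE_y$ to derive a commutation relation between $\Gamma_{\uy}$ and $\upy$; the expected behavior is that conjugating $\upy$ by $e^{\mp i\frac{\pi}{2}\Gamma_{\uy}}$ rotates the Dirac operator by a phase and may turn left multiplication into right multiplication through the Clifford anticommutativity. Concretely, one shows that $e^{\mp i\frac{\pi}{2}\Gamma_{\uy}}\,\upy = \bigl(\pm(\mp i)^m\bigr)\,(\,\cdot\,\upx)\,e^{\pm i\frac{\pi}{2}\Gamma_{\uy}}$ in the appropriate sense, where the sign flip $\Gamma_{\uy}\to -\Gamma_{\uy}$ in the exponent is exactly what converts $K_\pm$ into $K^{\mp}$. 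Applying this to $e^{-i\langle\ux,\uy\rangle}$ and using $\upy e^{-i\langle\ux,\uy\rangle} = -i\,\ux\,e^{-i\langle\ux,\uy\rangle}$ produces the claimed right-multiplication by $\ux$.

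The cleanest route, however, is likely to verify the identities on the eigenbasis $\{\psi_{j,k,l}\}$ of \eqref{basis}, for which the action of the Clifford-Fourier transform is known explicitly in \eqref{EigCF1}. Since $\cF_\pm$ is (up to normalization) integration against $K_\pm$, one can translate each operator identity for $K_\pm$ into an intertwining relation between $\cF_\pm$ and the operators $\upx$, $\Delta$; the relations $[\Delta,\ux]=2\upx$ and the $\mathfrak{osp}(1|2)$ structure of Theorem~\ref{ospFamily} then let one check the intertwining eigenbasis element by eigenbasis element, where everything reduces to tracking the phase factors $(-1)^{j+k}(\mp1)^k$ and $i^m(-1)^{j+1}(\mp1)^{k+m-1}$. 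The last two identities (those involving $\uy$, $\Delta_x$) follow from the first two by the symmetry relations $K_+(x,y)=\overline{K_-(x,-y)}$ and $\widetilde{K_\pm}(x,y)=\overline{K_\pm(x,y)}$ of Proposition~\ref{kernelsrels}, or simply by interchanging the roles of $x$ and $y$ in the computation. The main obstacle I anticipate is bookkeeping the factor $(\mp i)^m$ and the precise left-versus-right placement of the Clifford multiplication, since the kernel is not symmetric; getting the anticommutation signs right when moving $\upy$ through $\Gamma_{\uy}$ is where the delicacy lies, and I would pin this down by first checking the case $m=2$ against the explicit kernel $\cos t + (\ux\wedge\uy)\frac{\sin t}{t}$ of Theorem~\ref{KernelDim2}.
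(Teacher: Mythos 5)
Your primary route (your second paragraph) is in substance the paper's proof: from Theorem~\ref{ospFamily} one derives $\upy\,\Gamma_{\uy}^{k}=(m-1-\Gamma_{\uy})^{k}\,\upy$, hence $\upy\, e^{\mp i \frac{\pi}{2}\Gamma_{\uy}} = e^{\mp i \frac{\pi}{2}(m-1-\Gamma_{\uy})}\,\upy = (\mp i)^{m-1} e^{\pm i \frac{\pi}{2}\Gamma_{\uy}}\,\upy$, and then $\upy\, e^{-i\la \ux,\uy\ra} = -i\, e^{-i\la \ux,\uy\ra}\,\ux$ (the factor $\ux$ can be written on the right because the plane wave is scalar, and then $e^{\pm i \frac{\pi}{2}\Gamma_{\uy}}$, which multiplies by Clifford constants on the left and differentiates in $y$, passes over it) gives $\upy K_\pm = \pm(\mp i)^m K^{\mp}\ux$. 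Be aware that your displayed ``concrete'' commutation relation is garbled: $\upx$ has no business appearing in the relation for $\upy$; the correct statement is the one just written. Your derivation of $\Delta_y K_\pm = -|\ux|^2 K_\pm$ directly from $[\Gamma_{\uy},\Delta_y]=0$ is fine and slightly different from the paper, which instead squares the first-order identity using $\upy^2=-\Delta_y$; both work.

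Two steps of your proposal would fail as stated. First, the identities for $\uy K_\pm$ and $\Delta_x K_\pm$ do \emph{not} follow from Proposition~\ref{kernelsrels}: the relations $K_+(x,y)=\overline{K_-(x,-y)}$ and $\widetilde{K_\pm}=\overline{K_\pm}$ never interchange $x$ and $y$, and the kernel is explicitly non-symmetric, so there is no ``interchange the roles of $x$ and $y$'' symmetry available. The correct (and the paper's) route is the parallel computation with $\uy\,\Gamma_{\uy}^{k}=(m-1-\Gamma_{\uy})^{k}\,\uy$ together with the observation $\uy\, e^{-i\la \ux,\uy\ra} = i\,\bigl(e^{-i\la \ux,\uy\ra}\upx\bigr)$; this is precisely where the right-acting Dirac operator $\upx$ enters, and no conjugation symmetry produces it. Second, the ``cleanest route'' via the eigenbasis is not viable for this proposition: \eqref{EigCF1} is a statement about the operator exponential, its validity for the integral transform is only established later (Theorem~\ref{EigenfunctionsKernel}), an intertwining relation tested on a dense family would only recover the kernel identity almost everywhere after an additional density and continuity argument, and the proposition is asserted pointwise for \emph{all} $m$, including odd $m$ where the mapping properties of $\cF_\pm$ on $\cS(\RR^m)\otimes \cC l_{0,m}$ are not available. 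Stick with the direct commutation argument throughout.
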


\begin{proof}
Using Theorem \ref{ospFamily}, it is easy to obtain
\begin{align*}
\upy \Gamma_{\uy}^{k}&= (m-1-\Gamma_{\uy})^{k} \upy\\
\uy \Gamma_{\uy}^{k}&= (m-1-\Gamma_{\uy})^{k} \uy
\end{align*}
for $k \in \mN$. Taking into account that, by definition, $K_\pm(x,y) = e^{\mp i \frac{\pi}{2}\Gamma_{\uy}} e^{-i \langle \ux,\uy \rangle}$, we subsequently calculate
\begin{align*}
\upy K_+(x,y) &=\upy e^{- i \frac{\pi}{2}\Gamma_{\uy}} e^{-i \langle \ux,\uy \rangle}\\
&=e^{- i \frac{\pi}{2}\left(m-1-\Gamma_{\uy}\right)} \upy  e^{-i \langle \ux,\uy \rangle}\\
&=(-i)^{m-1} e^{ i \frac{\pi}{2}\Gamma_{\uy}} (-i \ux) e^{-i \langle \ux,\uy \rangle}\\
&= (-i)^{m} K_-(x,y) \ux.
\end{align*}
The expression for $\uy K_\pm$ is proven in a similar way. Using $\upx^{2}=-\Delta_{x}$ and $\uy^{2}=-|\uy|^{2}$ we immediately obtain the  other two properties.
\end{proof}

\begin{rem}
We do not know the action of the usual partial derivatives on the kernel except when $m =2$. For $m =2$, a quick computation using 
(\ref{ker2simple}) shows that 
\begin{align*}
\partial_{x_{1}}K_-(x,y) & = y_{2} e_{1}e_{2} K_-(x,y), \\
 \partial_{x_{2}}K_-(x,y) & = - y_{1} e_{1}e_{2} K_-(x,y).
\end{align*}
\end{rem}

\section{Properties of the Clifford-Fourier transform}
\setcounter{equation}{0}
\label{Inversion}

As an immediate consequence of Theorem \ref{kernelBound}, we can now specify the domain in the definition of the Clifford-Fourier transform. Let us define a class of functions
$$
    B(\RR^m) : = \left\{ f\in L^1(\RR^m): \int_{\RR^m} (1+|\uy|)^{(m-2)/2} | f(y)| dy < \infty \right\}.
$$

\begin{thm} \label{CFdomain}
Let $m$ be an even integer. The Clifford-Fourier transform is well-defined on $B(\RR^m) \otimes \cC l_{0,m}$. In particular, for 
$f \in B(\RR^m) \otimes \cC l_{0,m}$, $\cF_\pm f$ is a continuous function.
\end{thm}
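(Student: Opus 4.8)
The plan is to show that the integral defining $\cF_\pm f(y)$ converges absolutely for each fixed $y$, and then to establish continuity in $y$ by a dominated-convergence argument. First I would recall from Theorem \ref{kernelBound} that, for even $m$, the scalar components of the kernel satisfy
\[
|K_0^-(x,y)| + \sum_{i<j} |K_{ij}^-(x,y)| \leq c\,(1+|\ux|)^{(m-2)/2}(1+|\uy|)^{(m-2)/2}.
\]
Since $K_+(x,y) = \overline{K_-(x,-y)}$ by Proposition \ref{kernelsrels}, the same bound holds for $K_+$ up to relabeling $y \mapsto -y$, which does not affect $|\uy|$; hence it suffices to treat one sign.

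Next I would bound the integrand. Writing $f$ in components as in \eqref{clifford_func}, each scalar component of the product $K_\pm(x,y) f(x)$ is a finite sum of products of scalar components of $K_\pm$ and of $f$, with Clifford-algebra multiplication contributing only bounded structure constants $\pm 1$. Therefore there is a constant $c$ (depending only on $m$) such that the modulus of every component of $K_\pm(x,y)f(x)$ is at most
\[
c\,(1+|\ux|)^{(m-2)/2}(1+|\uy|)^{(m-2)/2}\,|f(x)|.
\]
For fixed $y$ this is integrable in $x$ precisely because $f \in B(\RR^m)\otimes\cC l_{0,m}$, i.e. $\int_{\RR^m}(1+|\ux|)^{(m-2)/2}|f(x)|\,dx < \infty$. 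This shows $\cF_\pm f(y)$ is well-defined, and moreover $|\cF_\pm f(y)| \leq c\,(1+|\uy|)^{(m-2)/2}\|f\|_{B}$, so the transform is finite everywhere.

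For continuity, I would fix $y_0$ and a sequence $y_n \to y_0$. The kernel $K_\pm(x,y)$ is continuous in $y$ for each fixed $x$ (being built from Bessel functions, polynomials, and $\ux\wedge\uy$ via the explicit formula of Theorem \ref{EvenExplicit}), so the integrand converges pointwise in $x$. Restricting the $y_n$ to a bounded neighborhood of $y_0$, say $|\uy_n| \leq R$, the bound above gives the uniform dominating function $c\,(1+R)^{(m-2)/2}(1+|\ux|)^{(m-2)/2}|f(x)|$, which is integrable by the definition of $B(\RR^m)$. The dominated convergence theorem then yields $\cF_\pm f(y_n) \to \cF_\pm f(y_0)$, proving that $\cF_\pm f$ is continuous.

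I do not expect a serious obstacle here, since the heavy lifting was done in Theorem \ref{kernelBound}; the only point requiring care is the bookkeeping that passes from the scalar bounds on the components $K_0^-, K_{ij}^-$ to a bound on the full Clifford-valued product $K_\pm(x,y)f(x)$, and the verification that the kernel is genuinely continuous in $y$ for fixed $x$ (which must be checked directly from the explicit formula, as the defining exponential-operator expression does not make this transparent). Both are routine once the sharp kernel bound is in hand.
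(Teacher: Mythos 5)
Your proposal is correct and follows exactly the paper's route: the paper's own proof simply cites the kernel bound of Theorem \ref{kernelBound} for well-definedness and invokes continuity of the kernel plus dominated convergence for continuity of $\cF_\pm f$. Your version merely spells out the routine details (componentwise bookkeeping, the local dominating function, and the reduction of $K_+$ to $K_-$ via Proposition \ref{kernelsrels}) that the paper leaves implicit.
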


\begin{proof}
It follows immediately from Theorem \ref{kernelBound} that the transform is well-defined on $B(\RR^m) \otimes \cC l_{0,m}$. The continuity of $f$ follows from the continuity of the kernel 
and the dominated convergence theorem.
\end{proof}

For $m$ being even we can now establish the inversion formula for Schwartz class functions. First we state a lemma. 

\begin{lem} \label{DiffTransform}
Let $m$ be even and $f \in \cS(\mR^{m})$. Then  
\begin{align*}
\cF_{\pm} \left(\ux \, f \right) & = \mp (-1)^{m/2} \upy \cF_{\mp} \left( f \right), \\
\cF_{\pm} \left(\upx f \right) & = \mp (-1)^{m/2}  \uy \cF_{\mp} \left( f \right).
\end{align*} 
\end{lem}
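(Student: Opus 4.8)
The plan is to prove the two operator identities by moving the Clifford-vector multiplication and the Dirac operator across the integral sign and then transferring them onto the kernel, where the differential relations from Proposition \ref{DiffKernel} apply. The key observation is that $\cF_{\pm}$ integrates over the $x$-variable, so the factors $\ux$ and $\upx$ act on $x$ inside the integral; I therefore want the $x$-side relations of Proposition \ref{DiffKernel}, namely $\uy K_{\pm}(x,y) = \mp(\mp i)^{m}(K^{\mp}(x,y)\upx)$ and its companion $\upy K_{\pm}=\pm(\mp i)^m K^{\mp}(x,y)\ux$, read in the appropriate direction.

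For the first identity I would start from
\[
\cF_{\pm}(\ux f)(y) = (2\pi)^{-\frac{m}{2}} \int_{\RR^m} K_{\pm}(x,y)\, \ux\, f(x)\, dx,
\]
and use the $x$-side relation of Proposition \ref{DiffKernel}, which expresses $K_{\pm}(x,y)\ux$ (the Dirac operator acting on the right in $x$) in terms of $\uy K_{\mp}(x,y)$. Since $m$ is even, $(\mp i)^m = (-1)^{m/2}$, so the sign factor collapses to the stated $\mp(-1)^{m/2}$. Because $\uy$ is a multiplication operator in $y$ and is constant with respect to the $x$-integration, I can pull $\uy$ (or rather $\upy$ after accounting for which relation is used) outside the integral, leaving exactly $\cF_{\mp}(f)(y)$ inside. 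The second identity is handled the same way, starting from $\int K_{\pm}(x,y)\,\upx f\, dx$ and integrating by parts in $x$ to move $\upx$ onto the kernel as a right action, producing $(K_{\pm}\upx)$ and then invoking the relevant line of Proposition \ref{DiffKernel}.

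The main obstacle is bookkeeping rather than conceptual: Clifford multiplication is noncommutative, so I must be careful that $\ux$ and $\upx$ act on the correct side, and that when I transfer the right-action of $\upx$ onto the kernel I pick up the convention $(K^{\mp}(x,y)\upx) = \sum_i (\partial_{x_i} K^{\mp})e_i$ exactly as defined in Proposition \ref{DiffKernel}. The integration by parts in the second identity requires that boundary terms vanish, which holds since $f \in \cS(\RR^m)$ decays rapidly and the kernel grows only polynomially by Theorem \ref{kernelBound}; this justifies both the integration by parts and the interchange of $\upy$ (or $\uy$) with the integral. I would also double-check that the map $K_{\pm}\mapsto K^{\mp}$ appearing in Proposition \ref{DiffKernel} correctly turns $\cF_{\pm}$ into $\cF_{\mp}$ on the right-hand side, matching the subscripts in the statement.
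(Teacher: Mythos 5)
Your proposal is correct and is essentially the paper's own proof: the first identity follows directly from the line $\upy K_{\mp}(x,y) = \mp(\pm i)^{m} K_{\pm}(x,y)\ux$ of Proposition \ref{DiffKernel} (it is this line, not the $\uy$--$\upx$ line, that converts right multiplication by $\ux$), while the second is obtained by integrating by parts in $x$ --- legitimate because of the polynomial bound of Theorem \ref{kernelBound} and the Schwartz decay of $f$ --- and then invoking $\uy K_{\mp}(x,y) = \pm(\pm i)^{m}\left(K_{\pm}(x,y)\upx\right)$. The only blemishes are terminological (calling right multiplication by $\ux$ a Dirac-operator action, and momentarily pairing it with the $\uy$ relation instead of the $\upy$ relation), which you flag and correct yourself, so the argument as a whole matches the paper's.
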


\begin{proof}
The first identity follows immediately from Proposition \ref{DiffKernel}. Because $m$ is even, the kernel $K_\pm$ has a polynomial bound according to Theorem \ref{kernelBound} and we can apply integration by parts, which gives the second identity by Proposition \ref{DiffKernel}. 
\end{proof}

\begin{thm} \label{CFschwartz} 
Let $m$ be even. Then $\cF_\pm$ is a continuous operator on $\cS(\RR^m) \otimes \cC l_{0,m}$.
\end{thm}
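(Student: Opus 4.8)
The plan is to show that $\cF_\pm$ maps $\cS(\RR^m)\otimes\cC l_{0,m}$ continuously into itself by establishing that for every pair of multi-indices the seminorms of $\cF_\pm f$ are controlled by finitely many seminorms of $f$. The natural strategy is to reduce all estimates to the differential relations already available, rather than to differentiate the kernel directly (which we cannot do in general, as the remark preceding the theorem notes). The two tools I would lean on are Lemma \ref{DiffTransform}, which intertwines multiplication by $\ux$ and the Dirac operator $\upx$ on the domain side with the operators $\upy$ and $\uy$ on the image side, and the two second-order identities packaged in Proposition \ref{DiffKernel}, namely $\Delta_y K_\pm = -|\ux|^2 K_\pm$ and $\Delta_x K_\pm = -|\uy|^2 K_\pm$.

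First I would record that the Schwartz topology on $\cS(\RR^m)\otimes\cC l_{0,m}$ is generated by the seminorms $\|f\|_{a,b}=\sup_x |\ux|^a\,|\partial^\beta f(x)|$ (summed over the finitely many Clifford components, with $|\beta|\le b$), and that it is equivalent to use the Dirac operator $\upx$ and multiplication by $\ux$ as generators, since $\upx^2=-\Delta$ and $\ux^2=-|\ux|^2$ recover the usual derivatives and polynomial weights. The heart of the argument is then a bootstrap: to bound $\uy^a\,\upy^b\,\cF_\pm f$ in sup-norm, I would repeatedly apply Lemma \ref{DiffTransform} to move each factor of $\uy$ and $\upy$ off the transform and onto $f$, at the cost of switching between $\cF_+$ and $\cF_-$ and picking up harmless scalar factors $\mp(-1)^{m/2}$. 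After $a+b$ applications one is left with an expression of the form $\cF_{\pm}(g)$ where $g$ is a finite sum of terms of the shape $\upx^{j}\ux^{k} f$ with $j+k\le a+b$; each such $g$ is again Schwartz.

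It then remains to bound $\|\cF_\pm g\|_\infty$ by a Schwartz seminorm of $g$. Here I would invoke Theorem \ref{kernelBound}: since $m$ is even the kernel satisfies $|K^-_0(x,y)|,|K^-_{ij}(x,y)|\le c(1+|\ux|)^{(m-2)/2}(1+|\uy|)^{(m-2)/2}$, so for fixed $y$,
\[
|\cF_\pm g(y)| \le c\,(1+|\uy|)^{(m-2)/2}\int_{\RR^m} (1+|\ux|)^{(m-2)/2}\,|g(x)|\,dx,
\]
and the integral is finite and bounded by a Schwartz seminorm of $g$ because $g\in\cS$. To convert the growing factor $(1+|\uy|)^{(m-2)/2}$ into decay — so that the result is genuinely Schwartz and not merely polynomially bounded — I would absorb extra powers of $\uy$ produced in the bootstrap: requesting a bound on $|\uy|^a\,|\cF_\pm g(y)|$ simply means carrying $a+\lceil (m-2)/2\rceil$ factors of multiplication through Lemma \ref{DiffTransform}, which costs only more derivatives and polynomial weights on $g$, all still finite in $\cS$. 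Putting these together yields, for each $a,b$, an estimate $\||\uy|^a\,\upy^b\,\cF_\pm f\|_\infty\le C\sum \|f\|_{a',b'}$ over a finite index set, which is exactly continuity in the Schwartz topology.

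The main obstacle I anticipate is bookkeeping rather than conceptual: one must verify that the repeated use of Lemma \ref{DiffTransform} is legitimate at each stage, i.e. that every intermediate function $\upx^{j}\ux^{k} f$ is Schwartz (so that the integration by parts underlying the second identity in that lemma is valid, using the polynomial kernel bound to kill boundary terms), and that the alternation between $\cF_+$ and $\cF_-$ together with the dimensional factors is tracked correctly. A secondary subtlety is that the growth factor $(1+|\uy|)^{(m-2)/2}$ from Theorem \ref{kernelBound} forces one to prove decay by hand through the multiplication operators, rather than getting it for free as in the classical case; one must confirm that enough factors of $\uy$ can always be supplied. Neither difficulty is deep, so the theorem follows once the intertwining is organized carefully.
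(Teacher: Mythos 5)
Your proposal is correct and follows essentially the same route as the paper: trade powers of $\uy$ and $y$-derivatives for multiplications and derivatives on $f$ via the kernel identities (you do this through the first-order intertwinings of Lemma \ref{DiffTransform}, the paper equivalently through their squares $\Delta_y K_\pm=-|\ux|^2K_\pm$, $\Delta_x K_\pm=-|\uy|^2K_\pm$ acting on the scalar components), then invoke the polynomial kernel bound of Theorem \ref{kernelBound} and integration by parts to gain the needed decay in $|\uy|$. The only point you assert rather than justify is that seminorms built from powers of $\ux$, $\upx$ (equivalently $|\ux|^2$ and $\Delta$) generate the Schwartz topology; this requires a Landau--Kolmogorov-type inequality $\|\partial^\a g\|_\infty\le c(\|\Delta^n g\|_\infty+\|g\|_\infty)$, which the paper supplies by citation.
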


\begin{proof}
Using formula (\ref{DecompKernel}) we can rewrite $\cF_{-}$ as 
\[
\cF_{-} = \cF_{0}^{-} + \sum_{i<j} e_{i} e_{j} \cF_{ij}^{-}
\]
with
\begin{align*}
\cF_{0}^{-}(f)(y) &= (2\pi)^{-m/2} \int_{\mR^{m}} K_{0}^{-}(x,y) f(x)dx\\
\cF_{ij}^{-}(f)(y) &= (2\pi)^{-m/2} \int_{\mR^{m}} K_{ij}^{-}(x,y) f(x) dx.
\end{align*}
Moreover, as e.g. $\Delta_{y} K_\pm(x,y)= - |\ux|^{2} K_\pm(x,y)$, we have immediately that also
\begin{align*}
\Delta_{y} K_{0}^{-}(x,y)&= - |\ux|^{2} K_{0}^{-}(x,y)\\
\Delta_{y} K_{ij}^{-}(x,y)&= - |\ux|^{2} K_{ij}^{-}(x,y).
\end{align*}
and in particular
\begin{align}
\label{propF0}
\begin{split}
\cF_{0}^{-}(\Delta_{x} f)(y) &= -|\uy|^{2}\cF_{0}^{-}(f)(y)\\
\cF_{0}^{-}(|\ux|^{2}f)(y) &= -\Delta_{y} \cF_{0}^{-}(f)(y).
\end{split}
\end{align}
The same results hold for $\cF_{ij}^{-}$.

It clearly suffices to prove that $\cF_{0}^{-}$ and $\cF_{ij}^{-}$ are continuous maps on $\cS(\RR^m)$. We give the proof for $\cF_{0}^{-}$, the other cases being similar.

Recall that the Schwartz class $\cS(\RR^m)$ is endowed with the topology defined by the family of semi-norms 
$$
    \rho_{\a,\b} (f) : = \sup_{x\in \RR^m} |x^\a \partial^\b f(x)|, \qquad \a ,\b \in \NN_0^m,  
$$
and $f \in \cS(\RR^m)$ if $\rho_{\a,\b}(f) < \infty$ for all $\a,\b$. The latter condition, however,
is equivalent to $\rho_{\a,n}^* (f) < \infty$ for 
$$
   \rho_{\a,n}^* (f) : =  \sup_{x\in \RR^m} |\ux^\a \Delta^n f(x)|, \qquad \a \in \NN_0^m, \quad n \in \NN_0.  
$$
Indeed, by induction, it is easy to see that $x^\a \partial^\b f = \sum c_{\g,\delta} \partial^\g (x^\delta f(x))$, where the 
sum is over $\{(\g, \delta): |\gamma| < |\a|, |\delta| \le |\b|\}$ and $c_{\g,\delta}$ are finite numbers, so that we only need to
consider the semi-norms defined by $\|\partial^\a (x^\b f)\|_\infty$, and we also know that (cf. \cite{Timo})
$\|\partial^\a g\|_\infty  \le c (\|\Delta^n g \|_\infty + \|g\|_\infty)$ for $|\a| \le 2 n$.  

Now let $\a \in \NN_0^m$ and $n \in \NN_0$. If $|\uy| \le 1$, then by (\ref{propF0}) and Theorem \ref{kernelBound}, 
\begin{align*}
   |\uy^\a \Delta_y^n \cF_{0}^{-} f(y)|  & =  |\uy^\a |  \cdot | \cF_{0}^{-}( | \{\underline{\cdot }\}|^{2n} f)(y)| \\ 
   &\le c (1+|\uy|)^{(m-2)/2} \int_{\RR^m}  (1+|\ux|)^{(m-2)/2} |\ux|^{2n} |f(x)| dx \\
   & \le c_{1} \sup_{x\in \RR^m}|(1+|\ux|)^{3m/2} |\ux|^{2n} f(x)|
\end{align*}
as $f$ is a Schwartz class function. For $|\uy| \ge 1$ we use $\Delta_{x} K^{-}_{0}(x,y)  = -|\uy|^2 K^{-}_{0}(x,y)$ and partial integration to conclude that 
\begin{align*}
   |\uy^\a \Delta_y^n \cF_{0}^{-} f(y)|  & =  |\uy^\a |  \cdot |\cF_{0}^{-}( | \{\underline{\cdot }\}|^{2n} f)(y)| \\ 
   &= |\uy^\a|\cdot |\uy|^{-2 \s} |\cF_{0}^{-}(\Delta^\s  | \{\underline{\cdot }\}|^{2n} f)(y) | \\
   & \le c |\uy|^{|\a| - 2\s}  (1+|\uy|)^{(m-2)/2} \, | \int_{\RR^m}  (1+|\ux|)^{(m-2)/2} \Delta^\s(|\ux|^{2n} f(x)) dx | \\
   &  \le c_{2} \sup_{x\in \RR^m}|(1+|\ux|)^{3m/2} \Delta^\s(|\ux|^{2n} f(x))|
\end{align*}
if $2 \s \ge |\a| + (m-2)/2$. 
Summarizing, we have
\begin{align*}
\sup_{y\in \RR^m}|\uy^\a \Delta_y^n \cF_{0}^{-} f(y)| \le \max &\left\{c_{1} \sup_{x\in \RR^m}|(1+|\ux|)^{3m/2} |\ux|^{2n} f(x)|, \right.\\
& \left.   c_{2} \sup_{x\in \RR^m}|(1+|\ux|)^{3m/2} \Delta^\s(|\ux|^{2n} f(x))|\right\}.
\end{align*}
This completes the proof.
\end{proof}

\begin{rem}
For $m$ being odd, we do not know if the bound for the kernel still holds. If it does, the above proof clearly carries over. 
\end{rem}

The series expressions for $A_{\l}$, $B_{\l}$ and $C_{\l}$ obtained in Theorem \ref{seriesthm} allow us to study the radial behavior of the Clifford-Fourier transform. This is the subject of the following theorem.

\begin{thm}
Let $M_{\ell} \in \cM_{\ell}$ be a spherical monogenic of degree $\ell$. Let $f(x)= f_0(|\ux|)$ be a real-valued radial function in 
$\cS(\RR^m)$. Further, put $\xi= \ux/|\ux|$, $\eta = \uy/|\uy|$ and $r = |\ux|$. Then one has
\begin{equation} \label{Hankel}
\cF_{-} \left(f_{0}(r)M_{\ell}(x) \right)=  (-1)^{\ell}  M_{\ell}(\eta) \int_{0}^{+\infty} r^{m+\ell-1}f_0(r)   z^{-\l} J_{\ell + \l}(z)  dr
\end{equation}
and
\begin{eqnarray*}
\cF_{-} \left( f_{0}(r) \ux M_{\ell}(x) \right)=  - i^{m}  \eta M_{\ell}(\eta) \int_{0}^{+\infty} r^{m+\ell}f_0(r)   z^{-\l} J_{\ell +1+ \l}(z)  dr
\end{eqnarray*}
with $z= r |\uy|$ and $\l = (m-2)/2$.
\label{RadialBeh}
\end{thm}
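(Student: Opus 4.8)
The plan is to pass to polar coordinates and reduce both identities to a single angular integral against the Gegenbauer--Bessel expansion of the kernel. Writing $x = r\xi$ with $r = |\ux|$ and $\xi \in \mS^{m-1}$, and using the homogeneity $M_\ell(x) = r^\ell M_\ell(\xi)$, the first transform becomes
$$
\cF_{-}\bigl(f_0(r)M_\ell(x)\bigr)(y) = (2\pi)^{-m/2}\int_0^\infty f_0(r)\,r^{m+\ell-1}\left[\int_{\mS^{m-1}}K_-(r\xi,y)\,M_\ell(\xi)\,d\sigma(\xi)\right]dr,
$$
so that everything hinges on the inner angular integral. For this I would substitute the expansion \eqref{GammaWatson}, in which the $\xi$-dependence of each summand sits entirely in the harmonic $(|\ux||\uy|)^k C_k^\lambda(\langle\xi,\eta\rangle)$ while $\Gamma_{\uy}$ acts only on the $y$ variable.

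Since $\Gamma_{\uy}$ commutes with integration in $\xi$, I would push the angular integral inside and invoke the reproducing-kernel identity \eqref{FH}. Because $M_\ell \in \cM_\ell \subset \cH_\ell$, only the term $k=\ell$ survives, and it produces $\tfrac{\l}{\lambda+\ell}\,c\,(r|\uy|)^\ell M_\ell(\eta) = \tfrac{\l}{\lambda+\ell}\,c\,r^\ell M_\ell(y)$, a spherical monogenic in $y$. Now \eqref{gammaeig1} gives $\Gamma_{\uy}M_\ell(y) = -\ell\,M_\ell(y)$, so $e^{i\frac{\pi}{2}\Gamma_{\uy}}$ acts on it as the scalar $i^{-\ell}$. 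Collecting the surviving $k=\ell$ factors and simplifying the constants (using $(\ell+\lambda)\tfrac{\l}{\lambda+\ell}=\l$, $2^\lambda\Gamma(\lambda+1)\cdot\tfrac{2\pi^{\lambda+1}}{\Gamma(\lambda+1)}=(2\pi)^{m/2}$, and the phase $(-i)^\ell i^{-\ell}=(-1)^\ell$) collapses the bracket to $(2\pi)^{m/2}(-1)^\ell M_\ell(\eta)\,z^{-\lambda}J_{\ell+\lambda}(z)$ with $z=r|\uy|$; inserting this yields \eqref{Hankel}.

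For the second identity I would run the same computation with $\ux M_\ell(x) = r^{\ell+1}\xi M_\ell(\xi)$ in place of $M_\ell(x)$. The essential change is that $\ux M_\ell$ is a harmonic polynomial of degree $\ell+1$ lying in the summand $\ux\cM_\ell$ of the Fischer decomposition \eqref{Fischer}; hence \eqref{FH} now selects $k=\ell+1$, and the bracket produces $\uy M_\ell(\uy)$, on which \eqref{gammaeig2} gives $\Gamma_{\uy} = \ell+m-1$, so $e^{i\frac{\pi}{2}\Gamma_{\uy}}$ acts as $i^{\ell+m-1}$. The phase bookkeeping then reads $(-i)^{\ell+1}i^{\ell+m-1} = -i^m$, and the same constant simplification leaves the factor $-i^m\,\eta M_\ell(\eta)\,z^{-\lambda}J_{\ell+1+\lambda}(z)$, producing the second formula.

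The only genuinely delicate points are the justification for interchanging the series, the $\xi$-integration, and the operator $e^{i\frac{\pi}{2}\Gamma_{\uy}}$ --- which I would handle by observing that after integrating against a fixed spherical harmonic the series collapses to one term, and that $\Gamma_{\uy}$ preserves the finite-dimensional space of homogeneous polynomials of the relevant degree in $y$, so its exponential reduces to multiplication by a single eigenvalue --- together with the careful tracking of the powers of $i$. I expect the main obstacle to be recognizing, in the second identity, that the relevant degree is $\ell+1$ and that the correct $\Gamma$-eigenvalue is $\ell+m-1$ rather than $-\ell$; this is exactly what converts the scalar $(-1)^\ell$ of the first case into $-i^m$.
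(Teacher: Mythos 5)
Your proof is correct, and it takes a genuinely different route from the paper's. The paper starts from the already-$\Gamma$-exponentiated kernel $K_-=A_\lambda+B_\lambda+(\ux\wedge\uy)C_\lambda$ of Theorem \ref{seriesthm} and integrates each piece against $f_0 M_\ell$; the $A_\lambda$ and $B_\lambda$ terms go through \eqref{FH} directly, but the wedge term requires rewriting $\ux\wedge\uy\, M_\ell=-\uy\ux M_\ell-\la\ux,\uy\ra M_\ell$ and invoking \eqref{Geg1}--\eqref{Geg2}, which produces two infinite tails $\sum_{k>\ell/2} z^{-\l}J_{2k+\l}(z)$ that must be seen to cancel before the finite answer emerges. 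You instead work upstream of Lemma \ref{actionGamma}: you integrate the pre-exponential expansion \eqref{GammaWatson} against $M_\ell(\xi)$ (resp.\ $\xi M_\ell(\xi)$) first, so the reproducing property \eqref{FH} collapses the series to the single term $k=\ell$ (resp.\ $k=\ell+1$), and $e^{i\frac{\pi}{2}\Gamma_{\uy}}$ then acts by a single eigenvalue via \eqref{gammaeig1} (resp.\ \eqref{gammaeig2}). This buys you a cleaner argument --- no wedge-term gymnastics, no cancellation of infinite sums, and a uniform treatment of all $\ell$ where the paper only writes out the even case --- at the mild cost of having to justify commuting $e^{i\frac{\pi}{2}\Gamma_{\uy}}$ past the right Clifford multiplication by $M_\ell(\xi)$ and the angular integral, which is legitimate since $\Gamma_{\uy}$ acts by left multiplication and $y$-differentiation only. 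Your phase bookkeeping ($(-i)^\ell i^{-\ell}=(-1)^\ell$ and $(-i)^{\ell+1}i^{\ell+m-1}=-i^m$) and the constant $2^\lambda\Gamma(\lambda+1)\cdot\frac{2\pi^{\lambda+1}}{\Gamma(\lambda+1)}=(2\pi)^{m/2}$ both check out.
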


\begin{proof}
We prove the first property in the case $\ell$ is even. Then we have, using (\ref{FH})
\begin{align*}
\int_{\mR^{m}} A_{\l}f_0(r) M_{l}(x) d x&= c 2^{\l-1}\Gamma(\l+1)(i^{m}+1) \frac{\l}{\ell+\l}M_{\ell}(\eta)\\
& \times \int_{0}^{+\infty}  r^{m+\ell-1}f_{0}(r)   z^{-\l} J_{\ell + \l}(z) dr
\end{align*}
and
\begin{align*}
\int_{\mR^{m}} B_{\l}f_0(r) M_{l}(x) d x &= -c 2^{\l-1}\Gamma(\l)(i^{m}-1) \l M_{\ell}(\eta)\\
& \times \int_{0}^{+\infty}  r^{m+\ell-1}f_{0}(r)   z^{-\l} J_{\ell + \l}(z) dr.
\end{align*}
The term containing $C_{\lambda}$ is more complicated. We first rewrite
\[
\ux \wedge \uy M_{\ell} = - \uy \ux M_{\ell} - \la \ux, \uy \ra M_{\ell}
\]
and calculate both terms seperately. As $\uy \ux M_{\ell} \in \cH_{\ell+1}$, we obtain using (\ref{Geg1})
\begin{align*}
& \int_{\mR^{m}} C_{\l}f_0(r) \uy \ux M_{l}(x) d x \\
& \qquad  = c 2^{\l}\Gamma(\l+1) (i^{m}+1) M_{\ell}(\eta) \int_{0}^{+\infty}  r^{m+\ell-1}f_{0}(r)   \sum_{k=\ell/2 +1}^{\infty} z^{-\l} J_{2k + \l}(z) dr. 
\end{align*}
To calculate the second term, we first apply (\ref{Geg2}) followed by (\ref{Geg1}), yielding
\begin{align*}
&\int_{\mR^{m}} C_{\l}f_0(r) \la \uy, \ux \ra M_{l}(x) d x\\
&\quad = - c 2^{\l}\Gamma(\l+1) (i^{m}+1) M_{\ell}(\eta) \int_{0}^{+\infty} r^{m+\ell-1}f_{0}(r)    \sum_{k=\ell/2 +1}^{\infty} z^{-\l} J_{2k + \l}(z) dr\\
& \qquad - c \frac{\ell}{\ell+\l}2^{\l-1}\Gamma(\l+1) (i^{m}+1) M_{\ell}(\eta) \int_{0}^{+\infty}  r^{m+\ell-1}f_{0}(r)    z^{-\l} J_{\ell + \l}(z) dr. 
\end{align*}
Collecting all terms then gives the desired result. 

The other cases are treated in a similar way.
\end{proof}

When $\ell =0$, the transform \eqref{Hankel} is, up to a constant, the Hankel transform $H_\l$ defined
by \cite[p. 456]{MR0010746} 
\begin{equation} \label{Hankl}
   H_\lambda f(s) : =  \int_0^\infty f(r) \frac{J_\lambda(rs)}{(rs)^\lambda} 
     r ^{2\lambda +1} dr 
\end{equation}
for $\lambda > -1/2$. The inverse Hankl transform is given by 
\begin{equation} \label{inversHankl}
     f(s) : =  \int_0^\infty H_\lambda f(r) \frac{J_\lambda(rs)}{(rs)^\lambda} 
     r ^{2\lambda +1} dr,
\end{equation}
which holds under mild conditions on $f$. As a consequence, we have the following corollary: 

\begin{cor}
If $f(x) = f_0(|\ux|)$, then $\cF_{-} f(x) =  H_\lambda f_0(|\ux|)$. 
\end{cor}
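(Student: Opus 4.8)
The plan is to specialize Theorem~\ref{RadialBeh} to the degree-zero case $\ell = 0$. First I would observe that the constant function $1$ is a spherical monogenic of degree zero, i.e.\ $1 \in \cM_{0}$, since $\upx 1 = 0$; hence we may take $M_{0} \equiv 1$ in \eqref{Hankel}. With $\ell = 0$ the prefactor $(-1)^{\ell}$ equals $1$ and $M_{\ell}(\eta) = 1$, so \eqref{Hankel} collapses to
\[
\cF_{-} \left( f_{0}(|\ux|) \right) = \int_{0}^{+\infty} r^{m-1} f_{0}(r)\, z^{-\l} J_{\l}(z)\, dr,
\]
with $z = r |\uy|$ and $\l = (m-2)/2$.

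The second step is pure bookkeeping of exponents. Setting $s = |\uy|$ gives $z = rs$, while $\l = (m-2)/2$ gives $m-1 = 2\l+1$. Substituting these into the displayed integral yields
\[
\cF_{-} f(y) = \int_{0}^{+\infty} r^{2\l+1} f_{0}(r)\, \frac{J_{\l}(rs)}{(rs)^{\l}}\, dr,
\]
which is precisely $H_{\l} f_{0}(s)$ by the definition \eqref{Hankl}. Since both sides are radial functions of $\uy$, this is the asserted identity $\cF_{-} f = H_{\l} f_{0}$.

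There is no genuine obstacle here: the statement is a direct reading of Theorem~\ref{RadialBeh} at $\ell=0$ together with the matching of the two normalizations. The only items worth checking are that the degree-zero monogenic is the constant $1$, that the exponent identity $m-1 = 2\l+1$ is correct, and that the integral converges --- the latter being immediate because $f \in \cS(\RR^{m})$ forces $f_{0}$ to be Schwartz on $[0,\infty)$, so the Hankel integral \eqref{Hankl} is absolutely convergent for every $s$.
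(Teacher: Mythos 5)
Your proposal is correct and follows exactly the route the paper intends: the corollary is stated as an immediate consequence of Theorem~\ref{RadialBeh} at $\ell=0$ (with $M_0\equiv 1$), and the identification with \eqref{Hankl} is just the exponent bookkeeping $m-1=2\l+1$ that you carry out. Nothing further is needed.
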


In particular, this shows that $\cF_{-}$ coincides with the classical Fourier 
transform for radial functions. This is as expected, since $\Gamma_{\ux}$ commutes with radial functions. 

Another corollary is the action of $\cF_{-}$ on the basis $\{ \psi_{j,k,l}\}$ defined in \eqref{basis}. 

\begin{thm} \label{EigenfunctionsKernel}
For the basis $\{\psi_{j,k,l}\}$ of  $\cS(\mR^{m}) \otimes \cC l_{0,m}$ , one has
\begin{align*}
\cF_{\pm}(\psi_{2j,k,l}) &= (-1)^{j+k} (\mp 1)^{k} \psi_{2j,k,l},\\
\cF_{\pm} (\psi_{2j+1,k,l}) &= i^{m} (-1)^{j+1} (\mp 1 )^{k+m-1} \psi_{2j+1,k,l}.
\end{align*}
In particular, the action of $\CF_\pm$ coincides with the operator $e^{ \frac{i \pi m}{4}} e^{\mp \frac{i \pi}{2}\Gamma  }e^{\frac{i \pi}{4}(\Delta - |\ux|^{2})}$ when restricted to the basis $\{ \psi_{j,k,l}\}$ and 
\begin{equation}\label{inversion}
\cF_{\pm}^{-1} \cF_{\pm} = Id 
\end{equation}
on the basis $\{\psi_{j,k,l}\}$, with $\cF_{\pm}^{-1}$ as in definition \ref{def:FC}. Moreover, when $m$ is even, \eqref{inversion} holds for all $f \in \cS(\RR^m) \otimes \cC l_{0,m}$.
\end{thm}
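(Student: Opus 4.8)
The plan is to reduce the whole computation to the radial formulas already established in Theorem \ref{RadialBeh} and then to invoke the classical self-reciprocity of Laguerre functions under the Hankel transform. First I would observe that each basis element is exactly of the shape treated there: $\psi_{2j,k,l} = f_0(r)\, M_k^{(l)}$ with $f_0(r) = L_j^{m/2+k-1}(r^2)\, e^{-r^2/2}$, and $\psi_{2j+1,k,l} = f_0(r)\,\ux\, M_k^{(l)}$ with $f_0(r) = L_j^{m/2+k}(r^2)\, e^{-r^2/2}$. Applying the two identities of Theorem \ref{RadialBeh} with $\ell = k$ then writes $\cF_-(\psi_{2j,k,l})$ and $\cF_-(\psi_{2j+1,k,l})$ as $M_k^{(l)}(\eta)$, respectively $\eta\, M_k^{(l)}(\eta)$, multiplied by a one-dimensional integral of the form $\int_0^\infty L_j^{\alpha}(r^2)\, e^{-r^2/2}\, J_{\alpha}(r|\uy|)\, r^{\alpha+1}\, dr$, where a short index check gives $\alpha = k + \l = m/2 + k - 1$ in the even case and $\alpha = k + 1 + \l = m/2 + k$ in the odd case, so that the Bessel order matches the Laguerre parameter exactly.

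The central step is the Hankel self-reciprocity identity
$$
\int_0^\infty L_j^{\alpha}(r^2)\, e^{-r^2/2}\, J_{\alpha}(rs)\, r^{\alpha+1}\, dr = (-1)^j\, s^{\alpha}\, e^{-s^2/2}\, L_j^{\alpha}(s^2),
$$
valid for $\alpha > -1$; equivalently, $e^{-r^2/2} L_j^{\alpha}(r^2)$ is an eigenfunction of the Hankel transform \eqref{Hankl} with eigenvalue $(-1)^j$. Substituting this back and using the homogeneity relations $s^k M_k^{(l)}(\eta) = M_k^{(l)}(\uy)$ and $s^{k+1}\,\eta\, M_k^{(l)}(\eta) = \uy\, M_k^{(l)}(\uy)$ (with $s = |\uy|$), the surviving powers of $|\uy|$ reassemble precisely into $\psi_{2j,k,l}(y)$ and $\psi_{2j+1,k,l}(y)$. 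Collecting the prefactors, the even case collapses to $(-1)^{j+k}$ and the odd case to $i^m(-1)^{j+1}$, which are exactly the $\cF_-$ specializations of the claimed eigenvalues; the $\cF_+$ case follows from the identical computation applied to the $K_+$ series, or from Proposition \ref{kernelsrels}. Since these are precisely the eigenvalues \eqref{EigCF1} of the operator exponential, the two operators agree on the basis.

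For the inversion on the basis, I would exploit that every eigenvalue obtained above has modulus one. By Proposition \ref{kernelsrels} we have $\widetilde{K_\pm} = \overline{K_\pm}$, and since each $\psi_{j,k,l}$ has real Clifford coefficients, complex conjugation passes through the integral to give $\cF_\pm^{-1}\psi_{j,k,l} = \overline{\cF_\pm \psi_{j,k,l}}$. Hence if $\cF_\pm \psi_{j,k,l} = \mu\, \psi_{j,k,l}$ with $\mu \in \mathbb{C}$, then $\cF_\pm^{-1}(\cF_\pm \psi_{j,k,l}) = \mu\, \cF_\pm^{-1}\psi_{j,k,l} = \mu\,\overline{\mu}\,\psi_{j,k,l} = |\mu|^2\, \psi_{j,k,l} = \psi_{j,k,l}$, which is \eqref{inversion} on the basis.

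Finally, for even $m$ the extension to all of $\cS(\RR^m)\otimes \cC l_{0,m}$ is a density-and-continuity argument. The map $\cF_\pm$ is continuous by Theorem \ref{CFschwartz}, and since $\widetilde{K_\pm} = \overline{K_\pm}$ satisfies the same bound as in Theorem \ref{kernelBound}, the same proof shows $\cF_\pm^{-1}$ is continuous; the span of $\{\psi_{j,k,l}\}$ is dense in $\cS(\RR^m)\otimes \cC l_{0,m}$, so the continuous operator $\cF_\pm^{-1}\cF_\pm$, agreeing with the identity on this dense subspace, is the identity throughout. I expect the main obstacle to be the bookkeeping in the middle paragraph, namely matching the Bessel order to the Laguerre parameter and assembling the numerical prefactors from Theorem \ref{RadialBeh} so that the eigenvalues emerge exactly as in \eqref{EigCF1}, rather than any conceptual difficulty.
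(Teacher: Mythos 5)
Your proposal is correct and follows essentially the same route as the paper: reduce to Theorem \ref{RadialBeh}, apply the Hankel--Laguerre eigenfunction identity (the paper cites it as \cite[exercise 21, p.~371]{Sz}), handle $\cF_+$ and $\cF_\pm^{-1}$ via Proposition \ref{kernelsrels}, and extend to all of $\cS(\RR^m)\otimes \cC l_{0,m}$ by density and the continuity from Theorem \ref{CFschwartz}. Your observation that $\cF_\pm^{-1}$ acts by $\overline{\mu}$ on the (real) basis elements, so that $|\mu|^2=1$ gives the inversion, is just a compact phrasing of the paper's remark that the eigenvalues of $\cF_\pm^{-1}$ are computed "in a similar way."
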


\begin{proof}
For $\cF_{-}$ this follows from the explicit expression of $\{ \psi_{j,k,l}\}$ (see formula \eqref{basis}), Theorem \ref{RadialBeh} and the following identity (see \cite[exercise 21, p. 371]{Sz})
\[
\int_{0}^{+\infty} r^{2\l+1} (rs)^{-\l} J_{k+\l}(rs)\, r^{k} L_{j}^{k+\l}(r^{2}) e^{-r^{2}/2}dr = (-1)^{j}s^{k} L_{j}^{k+\l}(s^{2}) e^{-s^{2}/2}, 
\]
where $L_j^\a$ is the Laguerre polynomial. The proof for $\cF_{+}$ is similar and the resulting eigenvalues clearly coincide with the ones given in formula (\ref{EigCF1}). Using Proposition \ref{kernelsrels} and definition \ref{def:FC}, one computes the eigenvalues of $\cF_{\pm}^{-1}$ on the basis $\{ \psi_{j,k,l}\}$ in a similar way. Formula (\ref{inversion}) then immediately follows.

For $m$ being even, the final result follows from Theorem \ref{CFschwartz} and the fact that 
$\{\psi_{j,k,l}\}$ is a dense subset of  $\cS(\mR^{m}) \otimes \cC l_{0,m}$. 
\end{proof}

\section{Generalized translation operator}
\label{SecTranslation}
\setcounter{equation}{0}

The convolution $f * g$ plays a fundamental role in classical Fourier analysis. It is defined by
$$
   (f * g) (x) = \int_{\RR^m} f(y) g(x-y) dy,
$$ 
and it depends on the translation operator $\tau_y: f  \mapsto f(\cdot - y)$. Under the Fourier transform,
$\tau_y$ satisfies $\wh {\tau_y f} (x) = e^{- i \la x,y\ra} \wh f(x)$, $x \in \RR^m$. We define a generalized 
translation operator related to the Clifford-Fourier transform. 

\begin{defn}
Let $f \in \cS(\RR^m) \otimes \cC l_{0,m}$. For $y \in \RR^m$ the generalized translation operator $f \mapsto \tau_y f$ is defined by 
$$
    \CF_- {\tau_y f}(x) = K_- (y, x)  \CF_- f(x), \qquad x \in \RR^m. 
$$
\end{defn}
  
By Theorem \ref{EigenfunctionsKernel}, this operator is well-defined when $m$ is even and it can be expressed, by the inversion of 
$\cF_-$, as an integral operator
\begin{equation} \label{translation1}
  \tau_y f(x)  =  (2 \pi)^{-\frac{m}{2}} \int_{\RR^m} \overline{K_-(\xi,x)}K_-(y,\xi) \CF_- f(\xi)d\xi. 
\end{equation}
 
We should emphasize at this point that functions taking values in the Clifford algebra $\cC l_{0,m}$ do not commute,
i.e., $f g \ne gf$, in general. As a result, it is not clear what properties can be established for the 
generalized translation operator that we just defined. We start with an observation. 

\begin{prop}
Let $m=2$. Then for all functions $f \in \cS(\mR^{m}) \otimes \cC l_{0,m}$ one has
\[
\tau_{y} f(x) = f(x-y).
\] 
If the dimension $m$ is even and $m>2$ then in general
\[
   \tau_{y} f(x) \neq f(x-y).
\]
\end{prop}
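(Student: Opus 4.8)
The plan is to prove the two assertions separately, exploiting in each case the explicit multiplicative behavior of the kernel. For the case $m=2$, the strategy is to reduce the defining relation $\CF_-(\tau_y f)(x) = K_-(y,x)\,\CF_- f(x)$ to the classical translation identity via Proposition \ref{KernelMultiplication}. Concretely, I would start from the integral representation \eqref{translation1} and substitute the multiplicative property $K_-(y,\xi)\,\overline{K_-(\xi,x)} = \overline{K_-(\xi,x-y)}$. The key point is that for $m=2$ the kernel is scalar-plus-bivector of the form \eqref{ker2simple}, so it behaves like a complex exponential (with $e_1e_2$ playing the role of $i$), and in particular the relevant factors commute. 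Once the integrand collapses to $\overline{K_-(\xi, x-y)}\,\CF_- f(\xi)$, the integral is precisely $\CF_-^{-1}(\CF_- f)(x-y) = f(x-y)$ by the inversion formula \eqref{inversion} of Theorem \ref{EigenfunctionsKernel}. The main subtlety here is bookkeeping of complex conjugates and the noncommutativity, but since everything reduces to the two-dimensional abelian structure, no real obstacle arises.

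For the negative statement when $m$ is even and $m>2$, the plan is to exhibit a single function $f$ for which $\tau_y f(x) \ne f(x-y)$, so a counterexample suffices rather than a general argument. The natural candidate is a radial function $f(x) = f_0(|\ux|)$, because by the Corollary to Theorem \ref{RadialBeh} the Clifford-Fourier transform of a radial function coincides with its classical Fourier transform and is again radial. Applying the defining relation to such an $f$, I would compute $\CF_-(\tau_y f)(x) = K_-(y,x)\,\CF_- f(x)$, where $\CF_- f$ is scalar-valued (radial), while $K_-(y,x)$ has a genuine bivector part $(\uy \wedge \ux)\,C_{(m-2)/2}$ by Theorem \ref{EvenExplicit}. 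Hence $\CF_-(\tau_y f)(x)$ acquires a nonzero bivector component. On the other hand, the ordinary translate $f(x-y)$ is scalar-valued, so its Clifford-Fourier transform cannot reproduce that bivector part unless the bivector contribution vanishes identically, which it does not for $m>2$.

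The cleanest way to make the contradiction precise is to compare the scalar (grade-zero) parts: if $\tau_y f = f(\cdot - y)$ held, then taking the grade-zero component of $\CF_-(\tau_y f)(x) = K_-(y,x)\,\CF_- f(x)$ would force $\CF_-(f(\cdot-y))(x)$ to equal $K_{0}^{-}(y,x)\,\CF_- f(x)$ (using the decomposition \eqref{DecompKernel} and that $\CF_- f$ is scalar). But the left-hand side is the classical Fourier transform of $f(\cdot - y)$, which by the standard shift property equals $e^{-i\la x,y\ra}\CF_- f(x)$ times the appropriate factor; comparing with $K_0^-(y,x)$ shows these scalar multipliers differ because $K_0^-$ is a finite Bessel sum distinct from a pure exponential once $m>2$. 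I expect the main obstacle to be making this last comparison rigorous without circularity: one must choose $f_0$ so that $\CF_- f$ does not vanish on a set where the kernel multipliers disagree, and one must verify that the bivector part $(\uy\wedge\ux)\,C_{(m-2)/2}^*$ is not annihilated after multiplication by the radial $\CF_- f(x)$. This is where I would invoke the sharpness remark following Theorem \ref{kernelBound} (that $\widetilde J_\alpha(0)$ is a nonzero constant) to guarantee $C_{(m-2)/2}^*$ is genuinely nonzero, thereby completing the argument.
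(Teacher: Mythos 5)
Your argument for $m=2$ is correct and is essentially the paper's route: the paper derives the identity $\tau_y f(x)=f(x-y)$ directly from the multiplicativity of the two-dimensional kernel (Proposition \ref{KernelMultiplication}), which is exactly the computation you carry out inside the integral representation \eqref{translation1}; since for $m=2$ all kernel values lie in the commutative subalgebra spanned by $1$ and $e_1e_2$ and the kernel is real-valued, your bookkeeping of conjugates and ordering goes through.

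The second half of your proposal, however, contains a fatal flaw: a radial function can never serve as a counterexample, because Theorem \ref{transl} --- the main theorem of this very section --- asserts that for radial $f$ one has $\tau_y f(x)=f_0(|\ux-\uy|)=f(x-y)$ for \emph{all} even $m$. So the contradiction you are trying to extract cannot exist, and indeed the two intermediate claims on which it rests are false. First, the Clifford--Fourier transform of a scalar-valued function is not scalar-valued in general: the kernel $K_-$ has a bivector part, so $\cF_-(f(\cdot-y))$ acquires bivector components even when $f(\cdot-y)$ is scalar; there is therefore no grade mismatch between $\cF_-(f(\cdot-y))$ and $K_-(y,x)\cF_-f(x)$. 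Second, $\cF_-$ coincides with the classical Fourier transform only on \emph{radial} functions (Corollary to Theorem \ref{RadialBeh}), and $f(\cdot-y)$ is not radial, so you may not replace $\cF_-(f(\cdot-y))(x)$ by $e^{-i\la \ux,\uy\ra}\widehat f(x)$. The correct argument is the one the paper uses: if $\tau_y f=f(\cdot-y)$ held for all $f\in\cS(\RR^m)\otimes \cC l_{0,m}$, then comparing $\cF_-(f(\cdot-y))(x)=(2\pi)^{-m/2}\int K_-(\xi+y,x)f(\xi)\,d\xi$ with $K_-(y,x)\cF_-f(x)=(2\pi)^{-m/2}\int K_-(y,x)K_-(\xi,x)f(\xi)\,d\xi$ for all $f$ would force the multiplicative identity $K_-(\xi+y,x)=K_-(y,x)K_-(\xi,x)$, which Proposition \ref{KernelMultiplication} shows fails for even $m>2$ (the left-hand side has too low a polynomial degree along the one-dimensional slice used there). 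Any genuine counterexample must therefore be non-radial.
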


\begin{proof}
This follows immediately from Proposition \ref{KernelMultiplication}. 
\end{proof}

In the case $m =2$, the explicit formula of the kernel function shows that one can identify the Clifford-Fourier transform with the ordinary Fourier transform, so that the above result is not surprising. In the case of $m > 2$, the above proposition suggests that the generalized translation is something new. However, our main result below shows, rather surprisingly, that $\tau_y$ coincides with the classical translation operator if $f$ is a radial function. 

\begin{thm} \label{transl}
Let $f \in \cS(\RR^m)$ be a radial function on $\RR^m$, $f(x) = f_0(|\ux|)$ with $f_0: \RR_+ \mapsto \RR$, then 
$\tau_y f(x) = f_0(|\ux-\uy|)$. 
\end{thm}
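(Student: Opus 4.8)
The plan is to reduce the statement to a single identity for $\CF_-$ and then exploit the fact, stressed repeatedly in Section~\ref{prelim}, that $\Gamma_{\ux}$ commutes with radial functions. Write $h(\xi):=f_0(|\uxi-\uy|)$ for the ordinary translate of $f$; the goal is to show $\tau_y f = h$. Since $m$ is even, $\CF_-$ is invertible on $\cS(\RR^m)\otimes\cC l_{0,m}$ by Theorem~\ref{EigenfunctionsKernel}, hence injective, and by definition $\CF_-\tau_y f(x)=K_-(y,x)\,\CF_- f(x)$. Thus it suffices to prove the single identity
\[
\CF_- h(x) = K_-(y,x)\,\CF_- f(x), \qquad x\in\RR^m .
\]

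For the left-hand side, recall from Definition~\ref{def:FC} that $K_-(\xi,x)=e^{i\frac{\pi}{2}\Gamma_{\ux}}e^{-i\la\uxi,\ux\ra}$, where the angular operator $\Gamma_{\ux}$ differentiates only in the output variable $x$, whereas the defining integral is taken over $\xi$. First I would move $e^{i\frac{\pi}{2}\Gamma_{\ux}}$ outside the integral to obtain $\CF_- h(x)=e^{i\frac{\pi}{2}\Gamma_{\ux}}\,\cF h(x)$, where $\cF$ denotes the classical Fourier transform \eqref{classFourier}. By the ordinary shift rule, $\cF h(x)=e^{-i\la\uy,\ux\ra}\,\cF f(x)$, and by the Corollary following Theorem~\ref{RadialBeh} the function $\cF f=\CF_- f$ is radial, say $\cF f(x)=g_0(|\ux|)$.

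Finally, since $\Gamma_{\ux}$ commutes with radial functions, i.e. $[\Gamma_{\ux},g_0(|\ux|)]=0$, every power $\Gamma_{\ux}^n$ and hence the exponential $e^{i\frac{\pi}{2}\Gamma_{\ux}}$ commutes with multiplication by $g_0(|\ux|)$. Therefore
\[
\CF_- h(x)=e^{i\frac{\pi}{2}\Gamma_{\ux}}\bigl(e^{-i\la\uy,\ux\ra}g_0(|\ux|)\bigr)=\bigl(e^{i\frac{\pi}{2}\Gamma_{\ux}}e^{-i\la\uy,\ux\ra}\bigr)g_0(|\ux|)=K_-(y,x)\,\CF_- f(x),
\]
which is exactly the required identity, and injectivity of $\CF_-$ then yields $\tau_y f(x)=f_0(|\ux-\uy|)$. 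I expect the only genuine obstacle to be the first step: rigorously justifying that $e^{i\frac{\pi}{2}\Gamma_{\ux}}$ may be pulled through the $\xi$-integral. This amounts to interchanging the operator series $\sum_n\frac{(i\pi/2)^n}{n!}\Gamma_{\ux}^n$ with integration against the Schwartz function $h$; each term is legitimate by differentiation under the integral sign, so the point is to control the convergence of the series, for which the rapid decay of $h$ together with the at most polynomial growth of $\Gamma_{\ux}^n\bigl[e^{-i\la\uxi,\ux\ra}\bigr]$ in $\xi$ should suffice. Everything else is the commuting relation already recorded in Section~\ref{prelim} together with the radiality of $\CF_- f$ supplied by the Corollary.
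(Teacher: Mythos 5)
Your overall strategy --- reduce the claim to the single identity $\cF_- h = K_-(y,\cdot)\,\cF_- f$ for $h(\xi)=f_0(|\uxi-\uy|)$, using the shift rule for the classical Fourier transform, the radiality of $\cF_- f$, and the commutation of $\Gamma_{\ux}$ with radial multipliers, then apply $\cF_-^{-1}$ --- is genuinely different from the paper's proof and would be dramatically shorter. But it hinges entirely on the step you yourself flag, and that step has a real gap which your proposed fix does not close. Pulling $e^{i\frac{\pi}{2}\Gamma_{\ux}}$ outside the $\xi$-integral is precisely the identity $\cF_- = e^{i\frac{\pi}{2}\Gamma_{\uy}}\circ\cF$ on all of $\cS(\RR^m)\otimes \cC l_{0,m}$, which the paper explicitly treats as heuristic and only establishes on the basis $\{\psi_{j,k,l}\}$ (Theorem \ref{EigenfunctionsKernel} says the two operators coincide ``when restricted to the basis''); upgrading this to all Schwartz functions by density would require continuity of $e^{i\frac{\pi}{2}\Gamma_{\uy}}$ on the Schwartz space, which is proved nowhere. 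Your direct justification via the Taylor series of the exponential fails quantitatively: $\Gamma_{\ux}^n e^{-i\langle\uxi,\ux\rangle}$ is $e^{-i\langle\uxi,\ux\rangle}$ times a polynomial whose degree grows with $n$ (each application of $\Gamma_{\ux}$ essentially produces a factor of size $|\uxi|\,|\ux|$), so the growth in $\xi$ is \emph{not} polynomial uniformly in $n$; the termwise absolute bound on $\sum_n\frac{(\pi/2)^n}{n!}\,|\Gamma_{\ux}^n e^{-i\langle\uxi,\ux\rangle}|$ is of exponential order in $|\uxi|\,|\ux|$, and Schwartz decay of $h$ does not dominate exponential growth. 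The polynomial bound on the limit $K_-(\xi,x)$ (Theorem \ref{kernelBound}) is a cancellation phenomenon that only becomes visible after the kernel has been computed in closed form, so neither Fubini--Tonelli nor dominated convergence is available at the point where you need the interchange.

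The paper circumvents all of this by never invoking the operator exponential: it writes $\tau_y f$ through the integral representation \eqref{translation1}, passes to polar coordinates, and proves the spherical identity $\int_{\mS^{m-1}}\overline{K_-(r\eta,x)}\,K_-(y,r\eta)\,d\omega(\eta)=2^{\l}\Gamma(\l+1)\,u^{-\l}J_{\l}(u)$ with $u=r\,|\ux-\uy|$ (Theorem \ref{IntegralKernelKernel}, via the chain of Gegenbauer lemmas of Section \ref{SecTranslation}), after which the result follows from inversion of the Hankel transform. To salvage your route you would need either a proof that $e^{i\frac{\pi}{2}\Gamma_{\uy}}$ is continuous on $\cS(\RR^m)\otimes\cC l_{0,m}$ (so that density of $\{\psi_{j,k,l}\}$ applies), or uniform polynomial bounds on the partial sums of the operator series; both are substantial additions not contained in the paper.
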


The proof of this theorem is long. The key ingredient is a compact formula for the integral 
\[
\int_{\mS^{m-1}} \overline{K_-( r \eta,x)} K_-(y,r \eta)  d\omega(\eta)  
\]
where $\int_{\mS^{m-1}} d\omega(\eta)=1$, which is derived using the series representation of the kernel function. The computation is divided into several auxiliary lemmas. In all these lemmas $k, l$ are natural numbers and $\l = (m-2)/2$. We also use 
$\ux' = \ux / |\ux|$ and $\uy' = \uy / |\uy|$. 

\begin{lem} \label{lemmaInnerprodGeg}
Put $I_{k} = \{k, k-2, k-4, \ldots \}$. Then
\begin{align*}
&\int_{\mS^{m-1}} \la  \eta, \uy' \ra C^{\l+1}_{k}(\la  \eta, \uy' \ra) C^{\l}_{l}(\la  \eta, \ux' \ra) d \omega(\eta)\\
& =\left\{ \begin{array}{ll} 0& l-1 \not \in I_{k}\\ \frac{k+1}{2(k+1+\l)} C^{\l}_{l}(\la  \uy', \ux' \ra)& l-1=k\\ C^{\l}_{l}(\la  \uy', \ux' \ra)& l+1 \in I_{k}. \end{array} \right.\end{align*}
\end{lem}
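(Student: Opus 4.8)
The plan is to expand the integrand in the zonal basis $\{C^{\l}_d(\la \eta,\uy'\ra)\}_d$ (in the integration variable $\eta$) and then invoke the reproducing property \eqref{FH}, which forces all but one term to vanish.

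First I would use the three-term recurrence \eqref{Geg2} with $n=k+1$ to rewrite the leading factor as
\[
\la \eta,\uy'\ra\, C^{\l+1}_{k}(\la \eta,\uy'\ra) = \frac{k+1}{2(k+1+\l)} C^{\l+1}_{k+1}(\la \eta,\uy'\ra) + \frac{k+1+2\l}{2(k+1+\l)} C^{\l+1}_{k-1}(\la \eta,\uy'\ra).
\]
Next I would apply \eqref{Geg1} to expand $C^{\l+1}_{k+1}$ and $C^{\l+1}_{k-1}$ as sums of $C^{\l}_d$; the degrees produced by $C^{\l+1}_{k+1}$ are exactly $I_{k+1}=\{k+1,k-1,\ldots\}$, while those produced by $C^{\l+1}_{k-1}$ are $I_{k-1}$, and in each expansion the coefficient of the degree-$l$ term $C^{\l}_l$ is $\frac{\l+l}{\l}$ whenever that degree occurs.

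Then I would integrate term by term against $C^{\l}_l(\la \eta,\ux'\ra)$, a zonal harmonic of degree $l$ in $\eta$. Passing from $d\sigma$ to the normalized measure $d\omega=d\sigma/c$ with $c=2\pi^{m/2}/\Gamma(m/2)$, formula \eqref{FH} gives
\[
\int_{\mS^{m-1}} C^{\l}_d(\la \eta,\uy'\ra)\, C^{\l}_l(\la \eta,\ux'\ra)\, d\omega(\eta) = \frac{\l}{\l+l}\,\delta_{d,l}\, C^{\l}_l(\la \uy',\ux'\ra).
\]
Hence only the degree $d=l$ term contributes, so the integral vanishes unless $l\in I_{k+1}$, which is precisely the condition $l-1\in I_k$; this recovers the first (vanishing) case of the lemma.

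Finally I would collect the surviving coefficients. If $l=k+1$ (the case $l-1=k$), only the top term of the $C^{\l+1}_{k+1}$ expansion survives, and the Gegenbauer factor $\frac{\l+l}{\l}$ cancels the reproducing factor $\frac{\l}{\l+l}$, leaving $\frac{k+1}{2(k+1+\l)}$, as claimed. If instead $l\in I_{k-1}$ (the case $l+1\in I_k$), the degree $l$ appears in \emph{both} expansions, so the two recurrence weights add to $\frac{k+1}{2(k+1+\l)}+\frac{k+1+2\l}{2(k+1+\l)}=1$, and after the same cancellation the coefficient is exactly $1$. The main point requiring care is this bookkeeping of which degrees arise from each Gegenbauer expansion, together with the verification that the two recurrence weights sum to $1$ in the overlapping case; the remaining manipulations are direct applications of \eqref{Geg2}, \eqref{Geg1} and \eqref{FH}.
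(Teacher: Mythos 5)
Your proof is correct and is essentially the paper's own (much terser) argument made explicit: expand $\la \eta,\uy'\ra C^{\l+1}_{k}(\la \eta,\uy'\ra)$ via \eqref{Geg2}, reduce to index $\l$ via \eqref{Geg1}, and apply the reproducing property \eqref{FH}. The bookkeeping of the degree sets $I_{k+1}$ and $I_{k-1}$ and the check that the two recurrence weights sum to $1$ in the overlapping case are exactly the details the paper leaves to the reader, and you have carried them out correctly.
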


\begin{proof}
Expand $\la  \eta, \uy' \ra C^{\l+1}_{k}(\la  \eta, \uy' \ra)$ using formula (\ref{Geg2}). Then apply (\ref{Geg1}) and use the reproducing property of (\ref{FH}). This yields the result.
\end{proof}

\begin{lem} \label{IntegralCliffordGeg}
Put $I_{k} = \{k, k-2, k-4, \ldots \}$. Then
\begin{align*}
&\int_{\mS^{m-1}} \eta  C^{\l+1}_{k}(\la  \eta, \uy' \ra) C^{\l}_{l}(\la  \eta, \ux' \ra) d \omega(\eta)\\
& = \begin{cases} \uy' C^{\l}_{l}(\la  \uy', \ux' \ra) & l+1 \in I_{k} \\ \uy' \left[ \frac{l}{2l+m-2} C^{\l}_{l}(\la  \uy', \ux' \ra)  + \frac{2\l}{2l+m-2} \ux' \wedge \uy' C^{\l+1}_{l-1}(\la  \uy', \ux' \ra)  \right]& l-1=k \\ 0 & l-1 \not \in I_{k}. \end{cases} 
\end{align*}
\end{lem}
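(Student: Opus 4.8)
The plan is to exploit the covariance of the integral to reduce everything to two \emph{scalar} integrals of the type already handled in Lemma~\ref{lemmaInnerprodGeg}. Write
$$
  \CI := \int_{\mS^{m-1}} \eta\, C^{\l+1}_{k}(\la \eta,\uy'\ra)\, C^{\l}_{l}(\la \eta,\ux'\ra)\, d\omega(\eta), \qquad w_0 := \la \ux',\uy'\ra .
$$
The integrand is a $1$-vector (the two Gegenbauer factors are scalars multiplying the vector $\eta$), and the scalar factors together with $d\omega$ are invariant under the diagonal action of $O(m)$ on $(\ux',\uy',\eta)$. Hence $\CI$ is an $O(m)$-equivariant $1$-vector-valued function of $(\ux',\uy')$, which forces $\CI \in \operatorname{span}\{\ux',\uy'\}$; so I may write $\CI = a\,\ux' + b\,\uy'$ with $a,b$ scalars depending only on $w_0$. (The same conclusion follows explicitly by decomposing $\eta\,C^{\l}_{n}(\la\eta,\uy'\ra)$ into solid harmonics of degrees $n\pm1$ and evaluating with (\ref{FH}), exactly as in the computation preceding Lemma~\ref{actionGamma}; this route also avoids any appeal to equivariance.)

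To pin down $a,b$ I would pair $\CI$ with $\uy'$ and with $\ux'$. Since $\la\eta,\uy'\ra=\sum_i\eta_iy_i'$, taking the scalar part of $\CI\,\uy'$ and $\CI\,\ux'$ gives $\la\CI,\uy'\ra = aw_0+b = S_1$ and $\la\CI,\ux'\ra = a+bw_0 = S_2$, where $S_1,S_2$ are the scalar integrals obtained from $\CI$ by replacing the factor $\eta$ by $\la\eta,\uy'\ra$ and by $\la\eta,\ux'\ra$ respectively. Now $S_1$ is \emph{precisely} Lemma~\ref{lemmaInnerprodGeg}. The integral $S_2$ I evaluate by the same mechanism: expand $\la\eta,\ux'\ra C^{\l}_{l}(\la\eta,\ux'\ra)$ by the three-term recurrence $(n{+}1)C^\l_{n+1}(w)=2(n{+}\l)w\,C^\l_n(w)-(n{+}2\l{-}1)C^\l_{n-1}(w)$ into $C^\l_{l\pm1}(\la\eta,\ux'\ra)$, expand $C^{\l+1}_{k}(\la\eta,\uy'\ra)=\sum_{n\in I_k}\tfrac{\l+n}{\l}C^\l_n(\la\eta,\uy'\ra)$ via (\ref{Geg1}), and invoke the reproducing/orthogonality property (\ref{FH}). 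Only the degrees $l\pm1$ survive, giving
$$
  S_2 = \frac{l+1}{2(l+\l)}\,C^{\l}_{l+1}(w_0)\,[\,l+1\in I_k\,] + \frac{l+2\l-1}{2(l+\l)}\,C^{\l}_{l-1}(w_0)\,[\,l-1\in I_k\,].
$$

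Solving the $2\times2$ system yields $a=(S_2-w_0S_1)/(1-w_0^2)$ and $b=S_1-aw_0$, whence, using the Clifford identity $\uy'(\ux'\wedge\uy')=\ux'-w_0\uy'$ (itself from $\uy'\ux'\uy'=-2w_0\uy'+\ux'$), one gets the compact form $\CI=\uy'\bigl[\,S_1+a\,(\ux'\wedge\uy')\,\bigr]$. The three stated cases then read off directly. If $l+1\in I_k$ (so also $l-1\in I_k$ for $l\ge1$) then $S_1=C^\l_l(w_0)$ and the three-term recurrence gives exactly $S_2=w_0C^\l_l(w_0)=w_0S_1$, so $a=0$ and $\CI=\uy'\,C^\l_l(w_0)$. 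If $l-1=k$ then only the $C^\l_{l-1}$-term of $S_2$ is present and $S_1=\tfrac{l}{2(l+\l)}C^\l_l(w_0)$; here I use the contiguous relation
$$
  2\l(1-w^2)C^{\l+1}_{l-1}(w)=\frac{1}{2(l+\l)}\Bigl[(l+2\l-1)(l+2\l)C^\l_{l-1}(w)-l(l+1)C^\l_{l+1}(w)\Bigr],
$$
(a consequence of $\tfrac{d}{dw}C^\l_l=2\l C^{\l+1}_{l-1}$ and the recurrence) to cancel the factor $1-w_0^2$, obtaining $a=\tfrac{\l}{l+\l}C^{\l+1}_{l-1}(w_0)=\tfrac{2\l}{2l+m-2}C^{\l+1}_{l-1}(w_0)$, which is the middle formula. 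In all remaining cases $l-1\notin I_k$ forces both $S_1$ and $S_2$ to vanish, so $\CI=0$.

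The main obstacle is the structural step $\CI\in\operatorname{span}\{\ux',\uy'\}$: I would justify it either by the equivariance argument above (valid for $\ux',\uy'$ linearly independent, the degenerate directions $w_0=\pm1$ being recovered by continuity) or, more self-containedly, by carrying out the solid-harmonic decomposition of $\eta\,C^\l_n(\la\eta,\uy'\ra)$ explicitly. The rest is computational bookkeeping: evaluating $S_2$ and verifying the two Gegenbauer identities that collapse the cases—in particular confirming the vanishing $S_2=w_0S_1$ in the first case and the cancellation of $1-w_0^2$ in the second. Small-degree checks (e.g. $k=0,l=1$ and $k=1,l=2$, together with the boundary value $l=0$) confirm the constants.
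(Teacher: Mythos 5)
Your proof is correct, but it follows a genuinely different route from the one in the paper. The paper's own argument is shorter and more structural: it takes the Fischer decomposition $C^{\l}_{l}(\la \eta,\ux'\ra)=F_{l}+G_{l}$ with $F_{l}\in\cM_{l}$ and $G_{l}\in\eta\,\cM_{l-1}$ (already computed in the proof of Lemma~\ref{actionGamma}), observes that $\eta F_{l}$ and $\eta G_{l}$ are restrictions to the sphere of spherical harmonics of degrees $l+1$ and $l-1$ respectively (using $\eta^{2}=-1$ and \eqref{Fischer}), expands $C^{\l+1}_{k}$ via \eqref{Geg1}, and then reads the three cases off directly from the reproducing property \eqref{FH}, the answer emerging already in the stated form $\uy'G_{l}$ resp.\ $\uy'(F_{l}+G_{l})$. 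You replace this Clifford-analytic decomposition by an $O(m)$-equivariance ansatz $\CI=a\,\ux'+b\,\uy'$, two scalar integrals ($S_{1}$ being Lemma~\ref{lemmaInnerprodGeg}, $S_{2}$ its analogue), and a $2\times2$ linear system; your identities (the value of $S_{2}$, the cancellation $S_{2}=w_{0}S_{1}$ when $l+1\in I_{k}$, the contiguous relation that removes the factor $1-w_{0}^{2}$ when $l-1=k$, and $\uy'(\ux'\wedge\uy')=\ux'-w_{0}\uy'$) all check out. What your route buys is independence from the monogenic machinery; what it costs is the division by $1-w_{0}^{2}$, the extra Gegenbauer contiguous relation, and the continuity argument at $\ux'=\pm\uy'$, none of which the paper needs. (Two cosmetic points: the scalar part of $\CI\,\uy'$ is $-\la\CI,\uy'\ra$ with the paper's conventions, though you use the correct Euclidean pairing in the actual computation; and the overlap of the cases $l+1\in I_{k}$ and $l-1\notin I_{k}$ when $l=0$ is an artifact of the lemma's statement affecting both proofs equally.)
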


\begin{proof}
We start by decomposing $C^{\l}_{l}(\la  \eta, \ux' \ra)$ in monogenic components. We have $C^{\l}_{l}(\la  \eta, \ux' \ra) = F_{l} + G_{l}$, with $F_{l} \in \cM_{l}$ and $G_{l} \in \eta \cM_{l-1}$ given by (see the proof of Lemma \ref{actionGamma})
\begin{align*}
F_{l} &= \left(1- \frac{l}{2l+m-2} \right) C^{\l}_{l}(\la  \eta, \ux' \ra)  - \frac{2\l}{2l+m-2} \ux' \wedge \eta \,  C^{\l+1}_{l-1}(\la  \eta, \ux' \ra)\\
G_{l} &= \frac{l}{2l+m-2} C^{\l}_{l}(\la  \eta, \ux' \ra)  + \frac{2\l}{2l+m-2} \ux' \wedge \eta \,  C^{\l+1}_{l-1}(\la  \eta, \ux' \ra). 
\end{align*}
Note that $\eta F_{l}$ is the restriction to the unit sphere of an element of $\cH_{l+1}$ and $\eta G_{l}$ the restriction to the unit sphere of an element of $\cH_{l-1}$ (because $\eta^{2}=-1$). Using (\ref{Geg1}) we can now apply (\ref{FH}), yielding the lemma.
\end{proof}

\begin{lem} \label{IntegralWedge}
One has
\begin{align*}
&\int_{\mS^{m-1}} (\uy' \wedge \eta )  C^{\l+1}_{k}(\la  \eta, \uy' \ra) C^{\l}_{l}(\la  \eta, \ux' \ra) d \omega(\eta) \\
&= -\frac{\l}{k+1+\l}\delta_{l-1, k} (\ux' \wedge \uy' )  C^{\l+1}_{k}(\la  \ux', \uy' \ra).
\end{align*}
\end{lem}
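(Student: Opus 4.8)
The plan is to reduce this integral to a linear combination of the two integrals already evaluated in Lemmas \ref{lemmaInnerprodGeg} and \ref{IntegralCliffordGeg}, using only an elementary Clifford-algebraic rewriting of the bivector $\uy' \wedge \eta$.

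First I would recall from the definitions of the inner and wedge products, together with the fact that the product of vectors is $\uy'\eta = -\la \uy',\eta\ra + \uy' \wedge \eta$, that $\uy' \wedge \eta = \uy'\eta + \la \uy',\eta\ra$. Substituting this into the integrand and pulling the constant factor $\uy'$ outside the integral (legitimate since $\uy'$ is independent of the integration variable $\eta$) splits the left-hand side into
\[
\uy' \int_{\mS^{m-1}} \eta\, C^{\l+1}_k(\la \eta, \uy'\ra)\, C^{\l}_l(\la \eta, \ux'\ra)\, d\omega(\eta) + \int_{\mS^{m-1}} \la \eta, \uy'\ra\, C^{\l+1}_k(\la \eta, \uy'\ra)\, C^{\l}_l(\la \eta, \ux'\ra)\, d\omega(\eta).
\]
The first integral is exactly the content of Lemma \ref{IntegralCliffordGeg}, and the second is precisely Lemma \ref{lemmaInnerprodGeg}, so both are already known in closed form.

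Next I would substitute these two evaluations and simplify, using repeatedly that $(\uy')^2 = -1$ so that $\uy'(\uy' X) = -X$ for any Clifford-valued $X$, and that $m-2 = 2\l$, whence $2l + m - 2 = 2(l+\l)$. The argument then follows the same three-way case split on the position of $l-1$ relative to $I_k$ that appears in the two input lemmas. In the cases $l-1 \notin I_k$ and $l+1 \in I_k$ the two contributions are scalar multiples of $C^{\l}_l(\la \uy', \ux'\ra)$ with opposite signs that cancel, giving $0$; this reproduces the Kronecker delta $\delta_{l-1,k}$. In the remaining case $l-1 = k$ the scalar $C^{\l}_l$ terms again cancel exactly — here one checks that the coefficient $\tfrac{k+1}{2(k+1+\l)}$ from Lemma \ref{lemmaInnerprodGeg} coincides with $\tfrac{l}{2l+m-2}$ from Lemma \ref{IntegralCliffordGeg} once $l = k+1$ — leaving only the bivector term $-\tfrac{2\l}{2l+m-2}(\ux'\wedge\uy')\,C^{\l+1}_{l-1}(\la \uy', \ux'\ra)$, which equals $-\tfrac{\l}{k+1+\l}(\ux'\wedge\uy')\,C^{\l+1}_{k}(\la \ux', \uy'\ra)$ by symmetry of the inner product.

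Since both input integrals are already proven, there is no genuine analytic obstacle; the only thing demanding care is the bookkeeping forced by non-commutativity. I must keep the factor $\uy'$ on the \emph{left} throughout, never commuting it past the bivector $\ux'\wedge\uy'$, and I must verify that the cancellation of the scalar parts really holds in the boundary case $l-1=k$, since this is exactly what forces the final answer to be purely a bivector. Matching those precise coefficients, rather than merely tracking the various terms, is the one delicate point of the computation.
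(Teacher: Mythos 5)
Your proposal is correct and follows exactly the paper's own (very terse) proof: write $\uy'\wedge\eta = \uy'\eta + \la\eta,\uy'\ra$, split the integral into the two pieces handled by Lemmas \ref{lemmaInnerprodGeg} and \ref{IntegralCliffordGeg}, and use $\uy'^{2}=-1$ together with $2l+m-2 = 2(k+1+\l)$ when $l=k+1$ to see that the scalar parts cancel in every case, leaving only the bivector term. The coefficient bookkeeping you describe checks out.
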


\begin{proof}
Note that $\uy' \wedge \eta  = \uy'  \eta  + \la \eta, \uy' \ra$ and that $\uy'^{2}=-1$. The lemma then follows, using Lemma \ref{lemmaInnerprodGeg} and \ref{IntegralCliffordGeg}.
\end{proof}

\begin{lem} \label{DimRedInt}
Put $k \leq l$. Then
\[
\int_{\mS^{m-1}}  C^{\l+1}_{k}(\la  \eta, \uy' \ra) C^{\l+1 }_{l}(\la  \eta, \ux' \ra) d \omega(\eta) = C^{\l+1}_{k}(\la  \ux', \uy' \ra), 
\]
provided $k+l$ is even. If $k+l$ is odd, the integral is zero.
\end{lem}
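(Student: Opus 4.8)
The plan is to reduce the index-$(\l+1)$ Gegenbauer polynomials to index-$\l$ ones, for which the reproducing relation (\ref{FH}) on $\mS^{m-1}$ applies directly. First I would expand both factors by (\ref{Geg1}),
\[
C^{\l+1}_{k}(\la \eta, \uy' \ra) = \sum_{i=0}^{\lfloor k/2\rfloor} \frac{\l+k-2i}{\l}\, C^{\l}_{k-2i}(\la \eta, \uy' \ra),
\]
and analogously for $C^{\l+1}_{l}(\la \eta, \ux' \ra)$. Each summand $C^{\l}_{n}(\la \eta, \uy' \ra)$ is the restriction to $\mS^{m-1}$ of a zonal spherical harmonic of degree $n$ in $\eta$, so the integral becomes a finite double sum of integrals of products of two zonal harmonics of degrees $k-2i$ and $l-2j$.

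Next I would invoke (\ref{FH}) with $H_\ell(\eta) = C^{\l}_{l-2j}(\la \eta, \ux' \ra)$; after passing to the normalized measure $d\omega = d\sigma / c$, with $c = 2\pi^{m/2}/\Gamma(m/2)$, this gives
\[
\int_{\mS^{m-1}} C^{\l}_{k-2i}(\la \eta, \uy' \ra)\, C^{\l}_{l-2j}(\la \eta, \ux' \ra)\, d\omega(\eta) = \frac{\l}{\l + (k-2i)}\, \delta_{k-2i,\, l-2j}\; C^{\l}_{k-2i}(\la \ux', \uy' \ra).
\]
The Kronecker delta forces $k-2i = l-2j$, i.e. $l-k = 2(j-i)$, which is solvable only when $l-k$ is even, equivalently $k+l$ even. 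This immediately yields the vanishing claim when $k+l$ is odd.

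When $k+l$ is even, I would observe that on the surviving diagonal $n := k-2i = l-2j$ the expansion prefactor $\frac{\l+k-2i}{\l}$ cancels exactly against the factor $\frac{\l}{\l+(k-2i)}$ produced by (\ref{FH}), leaving
\[
\sum_{n \in I_k} \frac{\l + n}{\l}\, C^{\l}_{n}(\la \ux', \uy' \ra),
\]
where the hypothesis $k \le l$ guarantees that the admissible $n$ range over $I_k = \{k, k-2, \ldots\}$, each arising from a unique pair $(i,j)$. Reading (\ref{Geg1}) in reverse identifies this sum with $C^{\l+1}_{k}(\la \ux', \uy' \ra)$, completing the proof.

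The only delicate point is the index bookkeeping: one must track the constant $c$ relating $d\sigma$ and $d\omega$, verify the cancellation of the $\frac{\l+n}{\l}$ factors, and use $k \le l$ to pin down the summation range so that the reassembled polynomial is $C^{\l+1}_k$ rather than $C^{\l+1}_l$. I expect no substantive obstacle beyond this chase, since the two analytic ingredients---(\ref{Geg1}) and the orthogonality (\ref{FH})---are already in hand; implicitly $\l \ge 1$ (i.e. $m>2$), consistent with the even-dimensional setting in which the lemma is used.
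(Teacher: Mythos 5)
Your proposal is correct and follows essentially the same route as the paper: expand both factors via \eqref{Geg1}, use the reproducing property \eqref{FH} (with the normalization constant $c$ absorbed into $d\omega$) to kill the off-diagonal terms and produce the parity condition, and reassemble the surviving diagonal sum over $I_k$ back into $C^{\l+1}_k$ using \eqref{Geg1} in reverse. The prefactor bookkeeping you describe is exactly the computation in the paper's proof.
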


\begin{proof}
When $k+l$ is even, we calculate, using (\ref{Geg1}) and (\ref{FH}),
\begin{align*}
&\int_{\mS^{m-1}}  C^{\l+1}_{k}(\la  \eta, \uy' \ra) C^{\l+1 }_{l}(\la  \eta, \ux' \ra) d \omega(\eta)\\
 &= \int_{\mS^{m-1}}  \left( \sum_{j=0}^{\lfloor \frac{k}{2}\rfloor} \frac{\l + k-2j}{\l} C^{\l}_{k-2j}(\la  \eta, \uy' \ra)\right) \left( \sum_{j=0}^{\lfloor \frac{l}{2}\rfloor} \frac{\l + l-2j}{\l} C^{\l}_{l-2j}(\la  \eta, \ux' \ra)\right) d \omega(\eta)\\
 &= \int_{\mS^{m-1}}  \left( \sum_{j=0}^{\lfloor \frac{k}{2}\rfloor} \left(\frac{\l + k-2j}{\l}\right)^{2} C^{\l}_{k-2j}(\la  \eta, \uy' \ra) C^{\l}_{k-2j}(\la  \eta, \ux' \ra)\right)  d \omega(\eta)\\
 &=\sum_{j=0}^{\lfloor \frac{k}{2}\rfloor} \frac{\l + k-2j}{\l} C^{\l}_{k-2j}(\la  \ux', \uy' \ra)\\
 &=C^{\l+1}_{k}(\la  \ux', \uy' \ra).
\end{align*}
If $k+l$ is odd, a similar calculation shows the result is zero.
\end{proof}

\begin{lem} \label{IntegralInnerInner}
Suppose $k \leq l$. Then one has
\begin{align*}
&\int_{\mS^{m-1}}  \la  \eta, \uy' \ra C^{\l+1}_{k}(\la  \eta, \uy' \ra)  \la  \eta, \ux' \ra C^{\l+1 }_{l}(\la  \eta, \ux' \ra) d \omega(\eta)\\
& =  \begin{cases} \la  \ux', \uy' \ra C^{\l+1}_{k}(\la  \ux', \uy' \ra)& k < l\\ \frac{k+1}{2(k+1+\l)} \la  \ux', \uy' \ra C^{\l+1}_{k}(\la  \ux', \uy' \ra) + \frac{k+1 + 2\l}{2(k+1+\l)} C^{\l+1}_{k-1}(\la  \ux', \uy' \ra) & k=l. \end{cases}  
\end{align*}
provided $k+l$ is even. If $k+l$ is odd, the integral is zero.
\end{lem}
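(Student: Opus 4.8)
The plan is to reduce this two–Gegenbauer integral to the single–Gegenbauer integral already handled in Lemma~\ref{DimRedInt}. First I would use the product formula \eqref{Geg2} to rewrite each factor $\la \eta, \uy' \ra C^{\l+1}_{k}(\la \eta, \uy' \ra)$ and $\la \eta, \ux' \ra C^{\l+1}_{l}(\la \eta, \ux' \ra)$ as a linear combination of $C^{\l+1}_{k\pm 1}$ and $C^{\l+1}_{l\pm 1}$ respectively. Explicitly, \eqref{Geg2} gives
\[
\la \eta, \uy' \ra C^{\l+1}_{k}(\la \eta, \uy' \ra) = \frac{k+1}{2(k+1+\l)} C^{\l+1}_{k+1}(\la \eta, \uy' \ra) + \frac{k+1+2\l}{2(k+1+\l)} C^{\l+1}_{k-1}(\la \eta, \uy' \ra),
\]
and similarly for the $\ux'$ factor with $k$ replaced by $l$. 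Expanding the product yields four terms, each of the form (constant)$\,\times\,\int_{\mS^{m-1}} C^{\l+1}_{k'}(\la \eta, \uy' \ra) C^{\l+1}_{l'}(\la \eta, \ux' \ra)\, d\omega(\eta)$ with $k' \in \{k\pm 1\}$ and $l' \in \{l \pm 1\}$, to which Lemma~\ref{DimRedInt} applies directly.

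Next I would invoke Lemma~\ref{DimRedInt} on each of the four pieces. Note that the parity of $k'+l'$ equals that of $k+l$ in every term, so if $k+l$ is odd all four integrals vanish, giving the stated zero; this disposes of the odd case immediately. When $k+l$ is even, Lemma~\ref{DimRedInt} selects, for each surviving term, the factor $C^{\l+1}_{\min(k',l')}(\la \ux', \uy' \ra)$ provided $\min(k',l') \le \max(k',l')$ (which is automatic). The bulk of the remaining work is then purely algebraic bookkeeping of the four coefficients, keeping careful track of which index is the smaller one.

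The case distinction $k<l$ versus $k=l$ is where the combinatorics differs, and this is the step I expect to require the most care. When $k<l$ strictly, the index comparisons in Lemma~\ref{DimRedInt} are unambiguous: among the four terms $(k\pm1, l\pm1)$ the smaller index is always on the $\uy'$ side, and after multiplying back by the inverse of the $\la\ux',\uy'\ra$-expansion coefficient one should recover exactly $\la \ux', \uy' \ra C^{\l+1}_{k}(\la \ux', \uy' \ra)$; the cleanest way to see this is to run the reduction on only one of the two factors, reducing the claim to Lemma~\ref{DimRedInt} with a single leftover $\la \ux', \uy' \ra$ factor. When $k=l$, however, the term $(k+1, l-1)=(k+1,k-1)$ and the term $(k-1,l+1)=(k-1,k+1)$ both survive and contribute $C^{\l+1}_{k-1}$, while the diagonal behaviour forces an extra lower-order contribution; assembling these produces the second, two-term expression with coefficients $\tfrac{k+1}{2(k+1+\l)}$ and $\tfrac{k+1+2\l}{2(k+1+\l)}$. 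The main obstacle is thus not any deep idea but the disciplined matching of coefficients in the equal-index case, where several of the four pieces coincide in their Gegenbauer index and must be added; a useful sanity check is that the two coefficients in the $k=l$ line sum in the natural way dictated by reapplying \eqref{Geg2} to $\la\ux',\uy'\ra C^{\l+1}_k(\la\ux',\uy'\ra)$, which confirms the final form.
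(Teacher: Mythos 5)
Your proposal is correct and follows essentially the same route as the paper: expand both factors via \eqref{Geg2}, reduce to four integrals of products of Gegenbauer polynomials handled by Lemma~\ref{DimRedInt} (which also disposes of the odd-parity case), and recombine the coefficients, with the $k=l$ case requiring the extra bookkeeping you describe. No gaps.
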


\begin{proof}
Take $k+l$ even.
First decompose $\la  \eta, \uy' \ra C^{\l+1}_{k}(\la  \eta, \uy' \ra)$  and  $\la  \eta, \ux' \ra C^{\l+1 }_{l}(\la  \eta, \ux' \ra)$ according to the recursion formula (\ref{Geg2}). This reduces the integral to 4 different integrals that we calculate separately. We obtain
\begin{align*}
I_{1} &= \frac{(k+1)(l+1)}{4(k+\l+1)(l+\l+1)} \int_{\mS^{m-1}}  C^{\l+1}_{k+1}(\la  \eta, \uy' \ra) C^{\l+1 }_{l+1}(\la  \eta, \ux' \ra) d \omega(\eta)\\
&= \frac{(k+1)(l+1)}{4(k+\l+1)(l+\l+1)} C^{\l+1}_{k+1}(\la  \ux', \uy' \ra)
\end{align*}
using Lemma \ref{DimRedInt}. Similarly, we find
\begin{align*}
I_{2} &= \frac{(k+1)(l+1 +2\l)}{4(k+\l+1)(l+\l+1)} \int_{\mS^{m-1}}  C^{\l+1}_{k+1}(\la  \eta, \uy' \ra) C^{\l+1 }_{l-1}(\la  \eta, \ux' \ra) d \omega(\eta)\\
&= \frac{(k+1)(l+1+2\l)}{4(k+\l+1)(l+\l+1)} \left\{ \begin{array}{ll} C^{\l+1}_{k+1}(\la  \ux', \uy' \ra) & k < l \\  C^{\l+1}_{l-1}(\la  \ux', \uy' \ra)&  k = l \end{array} \right.
\end{align*}
and also
\begin{align*}
I_{3} &= \frac{(k+1+2\l)(l+1)}{4(k+\l+1)(l+\l+1)} \int_{\mS^{m-1}}  C^{\l+1}_{k-1}(\la  \eta, \uy' \ra) C^{\l+1 }_{l+1}(\la  \eta, \ux' \ra) d \omega(\eta)\\
&= \frac{(k+1+2\l)(l+1)}{4(k+\l+1)(l+\l+1)} C^{\l+1}_{k-1}(\la  \ux', \uy' \ra)\\
I_{4} &= \frac{(k+1+2\l)(l+1+2\l)}{4(k+\l+1)(l+\l+1)} \int_{\mS^{m-1}}  C^{\l+1}_{k-1}(\la  \eta, \uy' \ra) C^{\l+1 }_{l-1}(\la  \eta, \ux' \ra) d \omega(\eta)\\
&= \frac{(k+1+2\l)(l+1+2\l)}{4(k+\l+1)(l+\l+1)} C^{\l+1}_{k-1}(\la  \ux', \uy' \ra).
\end{align*}
Now summing $I_{1} +I_{2} + I_{3} +I_{4}$ and using formula (\ref{Geg2}) in the other direction completes the proof.
\end{proof}

\begin{lem} \label{IntegralInnerWedge}
Put $I_{k} = \{k, k-2, k-4, \ldots \}$. Then 
\begin{align*}
& \int_{\mS^{m-1}} \la  \eta, \ux' \ra  C^{\l+1}_{k}(\la  \eta, \ux' \ra) (\eta \wedge \uy') C^{\l+1}_{l}(\la  \eta, \uy' \ra) d \omega(\eta)\\
& \qquad\qquad   = \begin{cases} 0& l \not \in I_{k}\\
(\ux' \wedge \uy') C^{\l+1}_{l}(\la  \ux', \uy' \ra) &l+2 \in I_{k}\\ 
\frac{k+1}{2(k+\l+1)}(\ux' \wedge \uy') C^{\l+1}_{l}(\la  \ux', \uy' \ra) & l=k.
 \end{cases}
\end{align*}
\end{lem}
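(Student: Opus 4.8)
The plan is to reduce the integral to the one already evaluated in Lemma~\ref{IntegralWedge}, so that all Clifford-algebraic content is handled by that lemma and only scalar bookkeeping remains. First I would strip the scalar prefactor $\la \eta, \ux' \ra C^{\l+1}_{k}(\la \eta, \ux' \ra)$ by applying the recursion (\ref{Geg2}) with $n=k+1$, namely
\[
\la \eta, \ux' \ra C^{\l+1}_{k}(\la \eta, \ux' \ra) = \frac{k+1}{2(k+1+\l)} C^{\l+1}_{k+1}(\la \eta, \ux' \ra) + \frac{k+1+2\l}{2(k+1+\l)} C^{\l+1}_{k-1}(\la \eta, \ux' \ra).
\]
Inserting this splits the integral as $\frac{k+1}{2(k+1+\l)} J_{k+1} + \frac{k+1+2\l}{2(k+1+\l)} J_{k-1}$, where
\[
J_{n} := \int_{\mS^{m-1}} C^{\l+1}_{n}(\la \eta, \ux' \ra)\,(\eta \wedge \uy')\, C^{\l+1}_{l}(\la \eta, \uy' \ra)\, d\omega(\eta).
\]

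Next I would evaluate $J_{n}$. Writing $\eta \wedge \uy' = -(\uy' \wedge \eta)$ and expanding $C^{\l+1}_{n}(\la \eta, \ux' \ra)$ into Gegenbauer polynomials of order $\l$ via (\ref{Geg1}), each resulting term is precisely of the shape handled by Lemma~\ref{IntegralWedge} (with its ``$k$'' equal to $l$ and its ``$l$'' equal to $n-2j$); the scalar Gegenbauer factors commute freely past the bivector $\uy' \wedge \eta$, so no reordering issue arises. The Kronecker delta in that lemma selects the unique index $j$ with $n-2j = l+1$, and the accompanying coefficient $\frac{\l+n-2j}{\l}\cdot\frac{\l}{l+1+\l}$ collapses to $1$. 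Hence $J_{n} = (\ux' \wedge \uy')\, C^{\l+1}_{l}(\la \ux', \uy' \ra)$ when $l+1 \in I_{n}$, and $J_{n}=0$ otherwise.

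Finally I would translate the membership conditions into conditions on $k$ and $l$ and collect the two contributions. Since $l+1 \in I_{k+1}$ exactly when $l \in I_{k}$, while $l+1 \in I_{k-1}$ exactly when $l+2 \in I_{k}$, the term $J_{k+1}$ survives precisely for $l \in I_{k}$ and $J_{k-1}$ precisely for $l+2 \in I_{k}$. This produces the three cases: if $l \notin I_{k}$ both vanish; if $l=k$ only $J_{k+1}$ contributes, with coefficient $\frac{k+1}{2(k+1+\l)}$; and if $l+2 \in I_{k}$ both contribute and the coefficients add via $\frac{k+1}{2(k+1+\l)} + \frac{k+1+2\l}{2(k+1+\l)} = 1$, recovering the full $(\ux' \wedge \uy')\, C^{\l+1}_{l}(\la \ux', \uy' \ra)$.

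I expect the only delicate point to be the careful tracking of the index sets $I_{n}$ under the shifts $k \mapsto k\pm 1$ and the selection $n-2j = l+1$, together with verifying that the surviving coefficients telescope to exactly $1$ in the case $l+2 \in I_{k}$ while leaving the isolated coefficient $\frac{k+1}{2(k+1+\l)}$ in the case $l=k$. All of the noncommutative structure is absorbed into Lemma~\ref{IntegralWedge}, so the argument requires no further Clifford manipulation beyond the identity $\eta \wedge \uy' = -(\uy' \wedge \eta)$.
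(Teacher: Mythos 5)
Your proposal is correct and follows exactly the paper's own (very terse) proof: expand $\la \eta,\ux'\ra C^{\l+1}_{k}(\la \eta,\ux'\ra)$ via \eqref{Geg2}, reduce to order-$\l$ Gegenbauer polynomials via \eqref{Geg1}, and invoke Lemma~\ref{IntegralWedge}; your index bookkeeping ($l+1\in I_{k+1}\Leftrightarrow l\in I_k$, $l+1\in I_{k-1}\Leftrightarrow l+2\in I_k$) and the collapse of the coefficients to $1$ (including the cancellation of the sign from $\eta\wedge\uy'=-(\uy'\wedge\eta)$ against the minus sign in Lemma~\ref{IntegralWedge}) all check out.
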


\begin{proof}
First expand $\la  \eta, \ux' \ra  C^{\l+1}_{k}(\la  \eta, \ux' \ra) $ using (\ref{Geg2}) and apply (\ref{Geg1}) to the result. The lemma then follows immediately using Lemma \ref{IntegralWedge}.
\end{proof}

We need some algebraic results before we can prove the final lemma. Recall that  
\begin{align*}
\eta \wedge \ux' &= -\ux' \eta - \la \eta, \ux' \ra,\\
\uy' \wedge \eta &= -\eta \uy' - \la \eta, \uy' \ra.
\end{align*}
This allows us to compute
\begin{align*}
(\eta \wedge \ux' )( \uy' \wedge \eta ) &= (\ux' \eta + \la \eta, \ux' \ra) (\eta \uy' + \la \eta, \uy' \ra)\\
&= \la \eta, \ux' \ra \la \eta, \uy' \ra - \ux' \uy' |\eta|^{2} + \ux' \eta  \la \eta, \uy' \ra + \eta \uy'  \la \eta, \ux' \ra\\
&=  |\eta|^{2} \la \ux', \uy' \ra - (\ux' \wedge \uy') |\eta|^{2} + (\ux' \wedge \eta)   \la \eta, \uy' \ra\\
& - \la \ux', \eta \ra   \la \eta, \uy' \ra + (\eta \wedge \uy')  \la \eta, \ux' \ra\\ 
&=\la \ux', \uy' \ra - (\ux' \wedge \uy')  + (\ux' \wedge \eta)   \la \eta, \uy' \ra - \la \ux', \eta \ra   \la \eta, \uy' \ra + (\eta \wedge \uy')  \la \eta, \ux' \ra, 
\end{align*}
where we used $|\eta|^{2}=1$.
This decomposition allows us to obtain the following lemma.

\begin{lem}\label{IntegralWedgeWedge}
One has
\begin{align*}
&\int_{\mS^{m-1}} (\eta \wedge \ux')  C^{\l+1}_{k}(\la  \eta, \ux' \ra) ( \uy' \wedge \eta) C^{\l+1}_{l}(\la  \eta, \uy' \ra) d \omega(\eta)\\
& = \delta_{k l} \frac{(k+1)(k+1+2\l)}{4\l(k+\l+1)}  C^{\l}_{k+1}(\la \ux', \uy' \ra ) - \delta_{k l} \frac{\l}{k+\l+1} (\ux' \wedge \uy') C^{\l+1}_{k}(\la \ux', \uy' \ra ).
\end{align*}
\end{lem}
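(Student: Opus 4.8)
The plan is to substitute the five-term expansion of $(\eta \wedge \ux')(\uy' \wedge \eta)$ displayed immediately before the statement into the integral. Since the two Gegenbauer factors are scalar and commute past everything, the integrand becomes this five-term expression multiplied by $C^{\l+1}_k(\la \eta, \ux' \ra) C^{\l+1}_l(\la \eta, \uy' \ra)$, so the integral splits into five pieces, each matching one of the auxiliary lemmas already proved. The two constant contributions, from $\la \ux', \uy' \ra$ and from $-(\ux' \wedge \uy')$, factor out of the integral and are evaluated by \lemref{DimRedInt}. The term $-\la \ux', \eta \ra \la \eta, \uy' \ra$ times the Gegenbauers is, up to sign and a harmless reordering of scalars, the integrand of \lemref{IntegralInnerInner}. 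Finally, the two remaining pieces $(\ux' \wedge \eta)\la \eta, \uy' \ra (\cdots)$ and $(\eta \wedge \uy')\la \eta, \ux' \ra (\cdots)$ each have the shape treated in \lemref{IntegralInnerWedge}: the second matches it directly, while for the first I would write $\ux' \wedge \eta = -(\eta \wedge \ux')$ and apply \lemref{IntegralInnerWedge} with the roles of $\ux', \uy'$ and of $k, l$ interchanged, so that $\ux' \wedge \uy'$ gets replaced by $\uy' \wedge \ux' = -(\ux' \wedge \uy')$.

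Every resulting contribution vanishes unless $k + l$ is even, so I would fix such a pair and then argue grade by grade that the off-diagonal part cancels. The scalar parts arise only from the $\la \ux', \uy' \ra$ term and from \lemref{IntegralInnerInner}; for $k \neq l$ these produce $+\la \ux', \uy' \ra\, C^{\l+1}_{\min(k,l)}(\la \ux',\uy'\ra)$ and $-\la \ux', \uy' \ra\, C^{\l+1}_{\min(k,l)}(\la \ux',\uy'\ra)$ respectively, hence cancel. The bivector parts arise from the $-(\ux' \wedge \uy')$ term and from the two applications of \lemref{IntegralInnerWedge}; here one checks that for $k \neq l$ exactly one of the two wedge integrals is nonzero, governed by the membership conditions $l \in I_k$ when $k<l$ versus $k \in I_l$ when $k>l$, and that its value exactly cancels the $-(\ux' \wedge \uy')$ contribution coming from \lemref{DimRedInt}. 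Thus only $k = l$ survives.

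For the diagonal term $k = l$ I would collect the two grades separately. The bivector part is immediate: the value $-(\ux' \wedge \uy') C^{\l+1}_k$ from the constant term together with two copies of $\frac{k+1}{2(k+\l+1)}(\ux' \wedge \uy') C^{\l+1}_k$ from the two wedge integrals combine to $-\frac{\l}{k+\l+1}(\ux' \wedge \uy') C^{\l+1}_k(\la \ux', \uy' \ra)$, which is the second term of the statement. The scalar part needs a short computation: combining $\la \ux', \uy' \ra C^{\l+1}_k$ with the diagonal value of \lemref{IntegralInnerInner}, I would use the recurrence (\ref{Geg2}) to rewrite $\la \ux', \uy' \ra C^{\l+1}_k$ in terms of $C^{\l+1}_{k+1}$ and $C^{\l+1}_{k-1}$, whence the combination collapses to a multiple of $C^{\l+1}_{k+1} - C^{\l+1}_{k-1}$; the relation (\ref{Geg3}) then converts this back to $C^{\l}_{k+1}$, yielding $\frac{(k+1)(k+1+2\l)}{4\l(k+\l+1)} C^{\l}_{k+1}(\la \ux', \uy' \ra)$, the first term of the statement.

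I expect the main obstacle to be the careful bookkeeping in the off-diagonal cancellation rather than any single hard computation: one must track which branch of the case distinction in \lemref{IntegralInnerWedge} (and in its $\ux' \leftrightarrow \uy'$, $k \leftrightarrow l$ mirror image used for the first wedge term) is active for $k < l$ versus $k > l$, and keep the non-commutative ordering of the Clifford factors consistent throughout, since reversing the order of a wedge flips its sign. The final reduction of the scalar diagonal coefficient through (\ref{Geg2}) and (\ref{Geg3}) is the only genuinely computational point, and it is short once the index shifts are arranged correctly.
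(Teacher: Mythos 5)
Your proposal follows the paper's own proof essentially step for step: the same five-term expansion of $(\eta \wedge \ux')(\uy' \wedge \eta)$, the same three auxiliary lemmas (\lemref{DimRedInt}, \lemref{IntegralInnerInner}, \lemref{IntegralInnerWedge}) applied to the five resulting integrals, and the same final reduction of the scalar diagonal coefficient via (\ref{Geg2}) and the Gegenbauer contiguity relation. The only difference is that you spell out the off-diagonal cancellation and the mirrored application of \lemref{IntegralInnerWedge} (with the sign flip from $\uy' \wedge \ux' = -\ux' \wedge \uy'$) that the paper compresses into ``putting everything together yields''; your bookkeeping there is correct.
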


\begin{proof}
First use the formula 
\begin{align*}
(\eta \wedge \ux' )( \uy' \wedge \eta ) &= \la \ux', \uy' \ra - (\ux' \wedge \uy')  + (\ux' \wedge \eta)   \la \eta, \uy' \ra\\
& - \la \ux', \eta \ra   \la \eta, \uy' \ra + (\eta \wedge \uy')  \la \eta, \ux' \ra.
\end{align*}
This splits the integral in 5 terms. These terms can immediately be calculated using the Lemmas \ref{DimRedInt}, \ref{IntegralInnerInner} and \ref{IntegralInnerWedge}. Putting everything together yields 
\begin{align*}
&\int_{\mS^{m-1}} (\eta \wedge \ux')  C^{\l+1}_{k}(\la  \eta, \ux' \ra) ( \uy' \wedge \eta) C^{\l+1}_{l}(\la  \eta, \uy' \ra) d \omega(\eta)\\
& = \delta_{k l} \frac{k+1+2\l}{2(k+\l+1)}  \left( \la \ux', \uy' \ra C^{\l+1}_{k}(\la \ux', \uy' \ra ) -  C^{\l+1}_{k-1}(\la \ux', \uy' \ra ) \right)\\
& - \delta_{k l} \frac{\l}{k+\l+1} (\ux' \wedge \uy') C^{\l+1}_{k}(\la \ux', \uy' \ra ).
\end{align*}
The first term can be further simplified as follows 
\begin{align*}
w C^{\l+1}_{k}(w ) -  C^{\l+1}_{k-1}(w) &= \frac{k+1}{2(k+\l+1)} C^{\l+1}_{k+1}(w ) + \left( \frac{k+1+2\l}{2(k+\l+1)} -1\right) C^{\l+1}_{k-1}(w )\\
&= \frac{k+1}{2(k+\l+1)} \left( C^{\l+1}_{k+1}(w ) -  C^{\l+1}_{k-1}(w )  \right)\\
&= \frac{k+1}{2\l}C^{\l}_{k+1}(w ),
\end{align*}
where we subsequently used (\ref{Geg2}) and (\ref{Geg1}). This completes the proof.
\end{proof}

Now we have all necessary ingredients to establish the key step.

\begin{thm}
For all $m \in \NN_0$, $m \ge 2$, 
\[
\int_{\mS^{m-1}} \overline{K_-( r \eta,x)} K_-(y,r \eta) d\omega(\eta)  = 2^{\l}\Gamma(\l+1) u^{-\l} J_{\lambda}(u)
\]
with $u = r \sqrt{|\ux|^{2}+|\uy|^{2} - 2\la \ux,\uy\ra}$ and $\l = (m-2)/2$.
\label{IntegralKernelKernel}
\end{thm}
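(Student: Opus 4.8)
The plan is to substitute the series of Theorem~\ref{seriesthm} into both factors, integrate termwise over $\mS^{m-1}$ with the auxiliary lemmas of this section, and recognise the outcome as Gegenbauer's addition theorem for Bessel functions. Writing $\xi=\ux/|\ux|$, $\uy'=\uy/|\uy|$, $z_1=r|\ux|$ and $z_2=r|\uy|$, I would expand
\[
\overline{K_-(r\eta,x)}=\overline{A_\l}+\overline{B_\l}+(r\eta\wedge x)\,\overline{C_\l},\qquad K_-(y,r\eta)=A_\l+B_\l+(y\wedge r\eta)\,C_\l,
\]
the first triple evaluated at $(\la\eta,\xi\ra,z_1)$ and the second at $(\la\eta,\uy'\ra,z_2)$, with only the scalars $i^{2\l+2}\pm(-1)^k$ affected by the conjugation (the bivector $r\eta\wedge x$ being real). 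Factoring $r\eta\wedge x=r|\ux|(\eta\wedge\xi)$ and $y\wedge r\eta=r|\uy|(\uy'\wedge\eta)$ puts the wedge parts into the exact form of the lemmas, and the extra factors $z_1,z_2$ absorb the $z^{-\l-1}$ of $C_\l$ into $z^{-\l}$, so that after multiplication every summand of the resulting double series in $k,l$ carries the uniform radial weight $z_1^{-\l}\,z_2^{-\l}$ times a product of two Bessel functions and one of three angular integrals: scalar--scalar $\int_{\mS^{m-1}} C_k^\l(\la\eta,\xi\ra)C_l^\l(\la\eta,\uy'\ra)\,d\omega(\eta)$ from the $A,B$ terms, mixed (a $C^\l$ against a wedge-weighted $C^{\l+1}$), or wedge--wedge.

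I would then evaluate these by the reproducing property \eqref{FH} with \eqref{Geg1} (scalar--scalar), Lemma~\ref{IntegralWedge} (mixed), and Lemma~\ref{IntegralWedgeWedge} (wedge--wedge). Each produces a Kronecker delta collapsing the sum to the diagonal $k=l$, where $(-1)^k=(-1)^l$ and the conjugated coefficients recombine through $i^{2\l+2}\,\overline{i^{2\l+2}}=1$. This is the point at which the even and odd cases look different --- for $m$ even the cross products $\overline{A_\l}B_\l,\overline{B_\l}A_\l$ drop out and the diagonal products $\overline{A_\l}A_\l,\overline{B_\l}B_\l$ carry the $(-1)^k$ dependence, whereas for $m$ odd the diagonal products become constant and the cross products supply it --- yet both reassemble to the same coefficient. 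All bivector contributions turn out to be proportional to $(\ux'\wedge\uy')C_{k-1}^{\l+1}(\la\ux',\uy'\ra)$ and must cancel; the surviving scalar pieces, namely the $A,B$ products and the scalar part of Lemma~\ref{IntegralWedgeWedge}, all carry $C_k^\l(\la\xi,\uy'\ra)$ and assemble into
\[
2^{2\l}\Gamma(\l+1)\Gamma(\l)\sum_{k=0}^\infty(\l+k)\,z_1^{-\l}J_{k+\l}(z_1)\,z_2^{-\l}J_{k+\l}(z_2)\,C_k^\l(\la\xi,\uy'\ra).
\]

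Finally I would invoke Gegenbauer's addition theorem (see \cite[\S11.4]{MR0010746}), which sums $2^\l\Gamma(\l)\sum_{k}(\l+k)z_1^{-\l}J_{k+\l}(z_1)z_2^{-\l}J_{k+\l}(z_2)C_k^\l(\cos\phi)$ to $u^{-\l}J_\l(u)$ with $u^2=z_1^2+z_2^2-2z_1z_2\cos\phi$; taking $\cos\phi=\la\xi,\uy'\ra=\la\ux,\uy\ra/(|\ux|\,|\uy|)$ gives $u^2=r^2(|\ux|^2+|\uy|^2-2\la\ux,\uy\ra)$ together with the stated prefactor $2^\l\Gamma(\l+1)$. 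The main obstacle is the middle step: organising the nine coefficient--integral combinations, reconciling the parity forced by the lemmas with the $i^{2\l+2}$ and $(-1)^k$ factors uniformly in $m$, and verifying that the bivector terms cancel exactly so that the $k$-th scalar coefficient reduces precisely to $2^{2\l}\Gamma(\l+1)\Gamma(\l)(\l+k)$.
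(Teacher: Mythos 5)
Your proposal follows essentially the same route as the paper's proof: expand both kernels via the series of Theorem~\ref{seriesthm}, integrate termwise using the reproducing property \eqref{FH} together with Lemmas~\ref{IntegralWedge} and~\ref{IntegralWedgeWedge}, check that the bivector contributions cancel and the diagonal scalar coefficients reduce to $4\l(k+\l)$ times the prefactor, and finish with Gegenbauer's addition theorem. The only cosmetic difference is that the paper first regroups $A_\l+B_\l$ into a single series $F_\l$ with coefficients $f_k$ (and $C_\l$ into $G_\l$ with $g_k$), which turns your nine coefficient--integral combinations into four and reduces the ``main obstacle'' you identify to the two algebraic identities $\overline{g_k}f_k+\overline{f_k}g_k+\overline{g_k}g_k=0$ and $\tfrac{\l}{\l+k}|f_k|^2+\tfrac{k(k+2\l)}{4\l(k+\l)}|g_k|^2=4\l(k+\l)$, valid uniformly in $m$.
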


\begin{proof}
First we rewrite $K_-(x,y)$ (see Theorem \ref{seriesthm}) as 
\[
K_-(x,y) = F_{\l}(w,z) + \ux'\wedge \uy' G_{\l}(w,z)
\]
with
\begin{align*}
F_{\l}(w,z) &= 2^{\l-1} \Gamma(\l) \sum_{k=0}^{\infty} f_{k} z^{-\l} J_{k+\l}(z) C^{\l}_{k}(w),\\
G_{\l}(w,z) &= 2^{\l-1} \Gamma(\l) \sum_{k=1}^{\infty} g_{k} z^{-\l} J_{k+\l}(z) C^{\l+1}_{k-1}(w)
\end{align*}
and with $\ux' = \ux/|\ux|$, $\uy' = \uy/|\uy|$, $w = \la \ux',\uy' \ra$, $z = |\ux||\uy|$ and $\l = (m-2)/2$. The coefficients $f_{k}$ and $g_{k}$ are given by
\begin{align*}
f_{k}&= \l (i^{m}+(-1)^{k}) - (k+\l)  (i^{m}-(-1)^{k}) \\
g_{k}&= -2\l (i^{m}+(-1)^{k}). 
\end{align*}
Using this decomposition of $K_-$, the integral
\[
\int_{\mS^{m-1}} \overline{K_-( r \eta,x)} K_-(y,r \eta) d\omega(\eta) 
\]
splits in 4 pieces $I_{1}+I_{2}+I_{3}+I_{4}$ which we calculate separately. For $I_{1}$, we can use (\ref{FH}) to obtain
\begin{align*}
I_{1} &= \int_{\mS^{m-1}} \overline{F_{\l}(z_{1},w_{1})}F_{\l}(z_{2},w_{2}) d\omega(\eta)\\
&= 4^{\l-1} \Gamma(\l)^{2} \sum_{k=0}^{\infty} \frac{\l}{\l+k}\overline{f_{k}} f_{k} (z_{1}z_{2})^{-\l} J_{k+\l}(z_{1})J_{k+\l}(z_{2}) C^{\l}_{k}(\la \ux' , \uy' \ra)
\end{align*}
where we use the notations $z_{1} = r |\ux|$, $z_{2}=r |\uy|$, $w_{1} = \la \eta, \ux' \ra$ and $w_{2} = \la \eta, \uy' \ra$.
For $I_{2}$, we use Lemma \ref{IntegralWedge} to see that the non-diagonal terms again vanish, yielding
\begin{align*}
I_{2} &= \int_{\mS^{m-1}} \eta \wedge \ux' \overline{G_{\l}(z_{1},w_{1})}F_{\l}(z_{2},w_{2}) d\omega(\eta)\\
&= -4^{\l-1} \Gamma(\l)^{2} \ux'\wedge \uy' \sum_{k=1}^{\infty} \frac{\l}{\l+k}\overline{g_{k}} f_{k} (z_{1}z_{2})^{-\l} J_{k+\l}(z_{1})J_{k+\l}(z_{2}) C^{\l+1}_{k-1}(\la \ux' , \uy' \ra)
\end{align*}
and similarly for $I_{3}$
\begin{align*}
I_{3} &= \int_{\mS^{m-1}} \overline{F_{\l}(z_{1},w_{1})}  \uy' \wedge \eta G_{\l}(z_{2},w_{2}) d\omega(\eta)\\
&= -4^{\l-1} \Gamma(\l)^{2} \ux'\wedge \uy' \sum_{k=1}^{\infty} \frac{\l}{\l+k}\overline{f_{k}} g_{k} (z_{1}z_{2})^{-\l} J_{k+\l}(z_{1})J_{k+\l}(z_{2}) C^{\l+1}_{k-1}(\la \ux' , \uy' \ra).
\end{align*}
Finally, we can calculate the term $I_{4}$ using Lemma \ref{IntegralWedgeWedge} as follows
\begin{align*}
I_{4} &= \int_{\mS^{m-1}} \eta \wedge \ux' \overline{G_{\l}(z_{1},w_{1})} \uy' \wedge \eta G_{\l}(z_{2},w_{2}) d\omega(\eta)\\
&= 4^{\l-1} \Gamma(\l)^{2} \sum_{k=1}^{\infty} \frac{k(k+2\l)}{4\l(k+\l)}\overline{g_{k}} g_{k} (z_{1}z_{2})^{-\l} J_{k+\l}(z_{1})J_{k+\l}(z_{2}) C^{\l}_{k}(\la \ux' , \uy' \ra)\\
& - 4^{\l-1} \Gamma(\l)^{2} \ux'\wedge \uy' \sum_{k=1}^{\infty} \frac{\l}{k+\l}\overline{g_{k}} g_{k} (z_{1}z_{2})^{-\l} J_{k+\l}(z_{1})J_{k+\l}(z_{2}) C^{\l+1}_{k-1}(\la \ux' , \uy' \ra).
\end{align*}
Adding these 4 terms then gives
\begin{align*}
&I_{1}+I_{2}+I_{3}+I_{4}\\
 &=4^{\l-1} \Gamma(\l)^{2} \sum_{k=0}^{\infty} \left(\frac{\l}{\l+k}\overline{f_{k}} f_{k} + \frac{k(k+2\l)}{4\l(k+\l)}\overline{g_{k}} g_{k} \right)(z_{1}z_{2})^{-\l}\\
&\qquad \times  J_{k+\l}(z_{1})J_{k+\l}(z_{2}) C^{\l}_{k}(\la \ux' , \uy' \ra) \\
&-4^{\l-1} \Gamma(\l)^{2} \ux'\wedge \uy' \sum_{k=1}^{\infty} \frac{\l}{k+\l}\left(\overline{g_{k}} f_{k}  +\overline{f_{k}} g_{k} + \overline{g_{k}} g_{k} \right) (z_{1}z_{2})^{-\l}\\
&\qquad\times J_{k+\l}(z_{1})J_{k+\l}(z_{2}) C^{\l+1}_{k-1}(\la \ux' , \uy' \ra).
\end{align*}
It is not difficult to check for all $k$ that $\overline{g_{k}} f_{k}  +\overline{f_{k}} g_{k} + \overline{g_{k}} g_{k} =0$, so the term in $\ux'\wedge \uy'$ vanishes. Similarly, we can compute that
\[
\frac{\l}{\l+k}\overline{f_{k}} f_{k} + \frac{k(k+2\l)}{4\l(k+\l)}\overline{g_{k}} g_{k} = 4\l (k+\l)
\]
for $k>0$. For $k=0$ we only have the term $\overline{f_{0}} f_{0} = 4 \l^{2}$. This allows to conclude that
\begin{align*}
I_{1}+I_{2}+I_{3}+I_{4} = 4^{\l} \l \Gamma(\l)^{2} \sum_{k=0}^{\infty}  (k+\l)(z_{1}z_{2})^{-\l} J_{k+\l}(z_{1})J_{k+\l}(z_{2}) C^{\l}_{k}(\la \ux' , \uy' \ra).
\end{align*}
Now we invoke the addition formula for Bessel functions (see \cite{MR0010746}, section 11.4), given by
\[
u^{-\l} J_{\l}(u)= 2^{\l} \Gamma(\l)\sum_{k=0}^{\infty}  (k+\l)(z_{1}z_{2})^{-\l} J_{k+\l}(z_{1})J_{k+\l}(z_{2}) C^{\l}_{k}(\la \ux' , \uy' \ra)
\]
with $u = r \sqrt{|\ux|^{2}+|\uy|^{2} - 2\la \ux,\uy\ra}$. This completes the proof of the theorem.
\end{proof}

We can now prove our main theorem in this section. 

\medskip\noindent
{\it Proof of Theorem \ref{transl}.}
If $f(x)=f_{0}(|\ux|)$ is real-valued and radial, then $\cF_{-}(f_{0})(x)$ is a radial function as well and it coincides with the ordinary Fourier transform $\wh f(x)$. With a slight abuse of notation, we write $\cF_{-}(f_{0})(x) = \wh f_0(r)$, with $r = |\ux|$. Using polar coordinates $\xi = r \eta$, $r = |\xi|$, we can then write
\[
\tau_{y} f (x) = \frac{2^{1-\frac{m}{2}}}{\Gamma(\frac{m}{2})} \int_{0}^{+\infty} r^{m-1}\wh f_{0}(r)  \left[ \int_{\mS^{m-1}} \overline{K_-( r \eta,x)} K_-(y,r \eta) d\omega(\eta)  \right] \, dr. 
\]
By Theorem \ref{IntegralKernelKernel}, we obtain 
\begin{align*}
\tau_{y} f (x)  = \int_{0}^{+\infty} r^{m-1} \widehat{f}(r) u^{-\l} J_{\lambda}(u) dr
\end{align*}
with  $u = r \sqrt{|\ux|^{2}+|\uy|^{2} - 2\la \ux,\uy\ra}$. This means that $\tau_y f$ is the Hankel transform of $\wh f$ by \eqref{Hankl}. However, $\wh f(x) = H_\l f_0(|\ux|)$, so that by the inversion of the Hankel transform, we obtain 
\[
\tau_{y} f(x)= f_{0}\left(\sqrt{|\ux|^{2}+|\uy|^{2} - 2\la \ux,\uy\ra}\right) = f_0(|\ux - \uy|),
\]
thus completing the proof.
\qed \medskip

\section{Generalized convolution and Clifford-Fourier transform}
\label{SecConvolution}
\setcounter{equation}{0}

Using the generalized translation, we can define a convolution for functions with values in Clifford algebra. 

\begin{defn}
For $f, g \in \cS(\RR^m) \otimes \cC l_{0,m}$, the generalized convolution, $f *_{Cl} g$, is defined by 
$$
    (f *_{Cl} g)(x) := (2\pi)^{-m/2}\int_{\RR^m} \tau_y f(x) g(y) dy, \qquad x \in \RR^m. 
$$  
\end{defn}

If $f$ and $g$ both take values in the Clifford algebra, then $f *_{Cl} g$ is not commutative in general. 
We are interested in the case when one of the two functions is radial. 
  
\begin{thm} 
If $ g\in \cS(\RR^m) \otimes \cC l_{0,m}$ and $f \in \cS(\RR^m)$ is a radial function, then $f *_{Cl} g$ satisfies 
$$
   \CF_- ( f *_{Cl} g)(x) = \CF_- f (x)   \CF_-g(x).
$$
In particular, under these assumptions one has
\[
f *_{Cl} g = g *_{Cl} f.
\]
\end{thm}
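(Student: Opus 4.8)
The plan is to compute $\CF_-(f *_{Cl} g)$ directly from the definitions, exploiting the fact that a radial $f$ has a \emph{scalar-valued} Clifford-Fourier transform. First I would insert the definition of the convolution into the integral definition of $\CF_-$ and interchange the two integrations, using that $g(y)$ is constant with respect to the transform variable and therefore passes through the $\CF_-$-integral on the right. This yields
\[
\CF_-(f *_{Cl} g)(x) = (2\pi)^{-m/2} \int_{\RR^m} \CF_-(\tau_y f)(x)\, g(y)\, dy.
\]
Applying the defining relation $\CF_-(\tau_y f)(x) = K_-(y,x)\,\CF_- f(x)$ then gives
\[
\CF_-(f *_{Cl} g)(x) = (2\pi)^{-m/2} \int_{\RR^m} K_-(y,x)\, \CF_- f(x)\, g(y)\, dy.
\]

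The crucial step is now to use that $f$ is radial: since $\Gamma_{\ux}$ commutes with radial functions, $\CF_- f$ coincides with the classical (Hankel) Fourier transform on such $f$ and is therefore real-valued, hence scalar (see the Corollary following Theorem~\ref{RadialBeh}). Consequently $\CF_- f(x)$ commutes with both $K_-(y,x)$ and $g(y)$, so I may pull it out to the left and recognise the remaining integral as $\CF_- g(x)$, obtaining
\[
\CF_-(f *_{Cl} g)(x) = \CF_- f(x)\, \CF_- g(x).
\]
This is the first assertion. For commutativity I would run the same computation on $g *_{Cl} f$; here $f(y)$ is the scalar factor and commutes freely, so combining $K_-(y,x)$ with $f(y)$ again produces $\CF_- f(x)$ and leaves $\CF_- g(x)$ on the right, giving $\CF_-(g *_{Cl} f)(x) = \CF_- f(x)\,\CF_- g(x)$ as well. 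Since $\CF_- f(x)$ is scalar it commutes with $\CF_- g(x)$, so the two transforms agree; injectivity of $\CF_-$ on $\cS(\RR^m)\otimes \cC l_{0,m}$ (Theorem~\ref{EigenfunctionsKernel}, for $m$ even, where the translation operator is defined) then forces $f *_{Cl} g = g *_{Cl} f$.

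The main technical obstacle is justifying the interchange of the two integrations. For this I would invoke Theorem~\ref{transl}, which identifies $\tau_y f(\xi) = f_0(|\ux-\uy|)$, together with the polynomial kernel bound of Theorem~\ref{kernelBound}. The resulting double integral of $(1+|\xi|)^{(m-2)/2}\,|f_0(|\xi-y|)|\,|g(y)|$ is dominated, after the substitution $\xi \mapsto \xi+y$ and the elementary inequality $1+|\xi+y| \le (1+|\xi|)(1+|y|)$, by a product of two finite integrals of Schwartz functions, so Fubini applies. Beyond this, the only point requiring care is keeping the non-commutative Clifford factors in the correct left--right order at every step, which is precisely what makes the radial (scalar) hypothesis indispensable.
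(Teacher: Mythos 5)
Your proposal is correct and follows essentially the same route as the paper's proof: apply Fubini to exchange the two integrations, use the defining relation $\CF_-(\tau_y f)(x) = K_-(y,x)\,\CF_- f(x)$, and exploit that the radial $f$ has a scalar (radial) transform that commutes with all Clifford-valued factors; the commutativity claim then follows from the symmetric computation and the inversion theorem. The only difference is that you supply an explicit domination argument justifying Fubini, which the paper leaves implicit.
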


\begin{proof}
If $f$ is a radial function then so is $\CF_- f$, which implies in particular that $\CF_-f \cdot h = h \cdot\CF_-f$ for any Clifford algebra valued function $h$. Hence, by the definition of the Clifford-Fourier transform and the Fubini theorem, 
\begin{align*}
    \CF_- (f*g)(x) & = (2\pi)^{-m} \int_{\RR^m} K_-(\xi,x) \left[ \int_{\RR^m} \tau_y f(\xi) g(y) dy \right] d \xi\\ 
    & = (2\pi)^{-m/2}\int_{\RR^m} \left[ (2\pi)^{-m/2}\int_{\RR^m}   K_-(\xi,x) \tau_y f(\xi) d \xi  \right] g(y) dy  \\
    & = (2\pi)^{-m/2}\int_{\RR^m} \CF_- (\tau_y f)(x) g(y) dy    = (2\pi)^{-m/2} \int_{\RR^m} K_-(y,x) \CF_- f (x) g(y) dy \\
    & = (2\pi)^{-m/2}  \CF_- f (x)  \int_{\RR^m} K_-(y,x)  g(y) dy = \CF_- f (x)   \CF_-g(x)
\end{align*}
where we have used the fact that $K_-(y,x) \CF_- f (x) = \CF_- f (x)  K_-(y,x)$. 
The same proof shows also that
\[
\CF_- ( g *_{Cl} f)(x) = \CF_- g (x)   \CF_- f(x)
\]
from which it immediately follows that $f *_{Cl} g = g *_{Cl} f$.
\end{proof}

Since the Clifford-Fourier transform coincides with the ordinary Fourier transform for radial
functions, we have in particular that, for $\phi(x) =  e^{-|\ux|^2/2}$, 
$$
          \CF_-  \phi (x) = \wh \phi(x) =  \phi(x).  
$$
We denote by $\phi_t$ the function $\phi_t(x):=   t^{-m/2} \phi(x/\sqrt{t})$, 
$t > 0$. A change of variable shows $\phi( \sqrt{t} x) = \wh \phi_t (x)$. 

\begin{lem} 
Let $m$ be even. If $f \in B(\RR^m) \otimes \cC l_{0,m}$, then
$$
    \phi_t  *_{Cl}f (x)= (2\pi)^{-m/2} \int_{\RR^m} K_-(\xi,x) \CF_- f(\xi) \phi (\sqrt{t} \xi)d \xi, \qquad  x \in \RR^m. 
$$
\end{lem}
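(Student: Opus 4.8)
The plan is to insert the integral representation \eqref{translation1} of the generalized translation into the definition of the convolution and then interchange the order of integration. Since $\phi_t$ is radial and lies in $\cS(\RR^m)$, its Clifford--Fourier transform coincides with the ordinary one, so $\CF_-\phi_t(\xi)=\wh{\phi_t}(\xi)=\phi(\sqrt t\,\xi)$, a scalar-valued radial function. Applying \eqref{translation1} to $f=\phi_t$ therefore gives
$$
   \tau_y\phi_t(x)=(2\pi)^{-m/2}\int_{\RR^m}\overline{K_-(\xi,x)}\,K_-(y,\xi)\,\phi(\sqrt t\,\xi)\,d\xi,
$$
and by Theorem \ref{transl} the left-hand side equals the scalar $\phi_t(x-\uy)$, so no ordering ambiguity arises when I substitute this into
$$
   \phi_t*_{Cl}f(x)=(2\pi)^{-m/2}\int_{\RR^m}\tau_y\phi_t(x)\,f(y)\,dy.
$$

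First I would substitute and write out the resulting double integral, treating the scalar factor $\phi(\sqrt t\,\xi)$ as commuting with everything. After interchanging the $\xi$- and $y$-integrations, the inner integral $(2\pi)^{-m/2}\int_{\RR^m}K_-(y,\xi)f(y)\,dy$ is exactly $\CF_-f(\xi)$ by Definition \ref{def:FC}, which leaves
$$
   \phi_t*_{Cl}f(x)=(2\pi)^{-m/2}\int_{\RR^m}\overline{K_-(\xi,x)}\,\phi(\sqrt t\,\xi)\,\CF_-f(\xi)\,d\xi.
$$
Finally, since $m$ is even, $K_-$ is real-valued by Proposition \ref{kernelsrels}, so $\overline{K_-(\xi,x)}=K_-(\xi,x)$; moving the scalar $\phi(\sqrt t\,\xi)$ to the right then yields the claimed formula.

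The step I expect to be the main obstacle is the justification of the Fubini interchange. For this I would estimate the double integrand in absolute value using the kernel bound of Theorem \ref{kernelBound}, namely $|K_-(\xi,x)|\le c(1+|\xi|)^{(m-2)/2}(1+|\ux|)^{(m-2)/2}$ and $|K_-(y,\xi)|\le c(1+|\uy|)^{(m-2)/2}(1+|\xi|)^{(m-2)/2}$, applied componentwise to the decomposition \eqref{DecompKernel}. The factor $\phi(\sqrt t\,\xi)=e^{-t|\xi|^2/2}$ then forces the $\xi$-integral of $(1+|\xi|)^{m-2}e^{-t|\xi|^2/2}$ to converge, while the $y$-integral of $(1+|\uy|)^{(m-2)/2}|f(y)|$ is finite precisely because $f\in B(\RR^m)\otimes\cC l_{0,m}$. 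Hence, for each fixed $x$, the integrand is absolutely integrable on $\RR^m\times\RR^m$ and Fubini applies. It is worth emphasizing that evenness of $m$ is used twice here: it guarantees both the validity of the polynomial kernel bound and the well-definedness of $\tau_y$ through the inversion established in Theorem \ref{EigenfunctionsKernel}.
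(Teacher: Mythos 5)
Your proof is correct and follows essentially the same route as the paper's: substitute the integral representation \eqref{translation1} of $\tau_y\phi_t$ (with $\CF_-\phi_t(\xi)=\phi(\sqrt{t}\,\xi)$) into the definition of the convolution, interchange the two integrations by Fubini, and recognize the inner $y$-integral as $\CF_- f(\xi)$. The only difference is that you spell out the Fubini justification via the kernel bounds of Theorem~\ref{kernelBound} and the Gaussian decay of $\phi(\sqrt{t}\,\xi)$, and you handle the conjugate $\overline{K_-(\xi,x)}=K_-(\xi,x)$ explicitly, both of which the paper leaves implicit.
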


\begin{proof}
Just like in the classical case, this is a simple application of Fubini' s theorem.  
\begin{align*}
    (  \phi_t  *_{Cl}f ) (x) & = (2\pi)^{-m/2} \int_{\RR^m} \tau_y \phi_t(x) f(y) dy  \\
       & = (2\pi)^{-m}  \int_{\RR^m} \left[ \int_{\RR^m} K_-(\xi, x) K_-(y, \xi)  \wh \phi_t (\xi) d\xi \right] f(y)dy \\
       & = (2\pi)^{-m/2} \int_{\RR^m} K_-(\xi, x) \left[ (2\pi)^{-m/2}\int_{\RR^m} K_-(y, \xi)  f(y)dy \right] \phi (\sqrt{t} \xi) d\xi  \\
       & =  (2\pi)^{-m/2}\int_{\RR^m} K_-(\xi, x) \CF_- f (\xi) \phi (\sqrt{t} \xi) d\xi, 
\end{align*}
where we have used the fact that $\wh \phi_t (\xi) = \phi( \sqrt{t} \xi)$ is a radial function so that it 
commutes with $f(y)$. 
\end{proof}

We are now in the position to establish an inversion formula for the Clifford-Fourier transform in $B(\RR^m) \otimes \cC l_{0,m}$.

\begin{thm} 
Let $m$ be even. If $f \in B(\RR^m) \otimes \cC l_{0,m}$ and $ \CF_- f \in B(\RR^m) \otimes \cC l_{0,m}$, and if 
$$
   g(x) = (2\pi)^{-m/2} \int_{\RR^m}  K_-(\xi,x) \CF_- f(\xi) d \xi, \qquad  x \in \RR^m, 
$$
then $g \in C(\RR^m)$ and $f(x) = g(x)$ a.e.
\end{thm}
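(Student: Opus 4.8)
The plan is to run a Gaussian approximate-identity argument, exploiting the fact established in Theorem~\ref{transl} that the generalized translation reduces to the ordinary one on radial functions. Let $\phi(x) = e^{-|\ux|^2/2}$ and $\phi_t(x) = t^{-m/2}\phi(x/\sqrt t)$ as above. Since $\phi_t$ is radial, Theorem~\ref{transl} gives $\tau_y \phi_t(x) = \phi_t(x-y)$, so that by the definition of the generalized convolution $\phi_t *_{Cl} f(x) = (2\pi)^{-m/2}\int_{\RR^m}\phi_t(x-y)f(y)\,dy$; that is, up to the constant $(2\pi)^{-m/2}$, $\phi_t *_{Cl} f$ is the classical mollification of $f$ by a Gaussian. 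Because $\phi_t$ is scalar-valued, this holds componentwise for the $\cC l_{0,m}$-valued function $f$.

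First I would record the normalization. A change of variables gives $\int_{\RR^m}\phi_t = (2\pi)^{m/2}$, so $(2\pi)^{-m/2}\phi_t$ has total integral $1$ and is a standard Gaussian approximate identity. Since $f \in B(\RR^m)\otimes \cC l_{0,m}\subset L^1(\RR^m)\otimes \cC l_{0,m}$, the classical theory of approximate identities yields $\phi_t *_{Cl} f \to f$ in $L^1$ as $t\to 0^+$; in particular there is a sequence $t_n \to 0^+$ along which $\phi_{t_n}*_{Cl}f \to f$ almost everywhere.

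On the other hand, the preceding Lemma gives the alternative expression $\phi_t *_{Cl} f(x) = (2\pi)^{-m/2}\int_{\RR^m} K_-(\xi,x)\,\CF_- f(\xi)\,\phi(\sqrt t\,\xi)\,d\xi$. As $t\to 0^+$ we have $\phi(\sqrt t\,\xi)\to 1$ pointwise with $|\phi(\sqrt t\,\xi)|\le 1$, while by Theorem~\ref{kernelBound} the integrand is dominated in absolute value by $c\,(1+|x|)^{(m-2)/2}(1+|\xi|)^{(m-2)/2}|\CF_- f(\xi)|$, which is integrable in $\xi$ precisely because $\CF_- f \in B(\RR^m)\otimes\cC l_{0,m}$. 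Dominated convergence therefore yields, for every fixed $x$, that $\phi_t *_{Cl} f(x) \to g(x)$; the same bound, together with the continuity of $\xi \mapsto K_-(\xi,x)$, shows by dominated convergence that the defining integral for $g$ converges and that $g \in C(\RR^m)$.

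Combining the two limits along $t_n\to 0^+$ forces $f(x) = g(x)$ for almost every $x$, which completes the proof. The only genuinely nontrivial inputs are already in hand, namely the identification of $\tau_y$ with the ordinary translation on the radial mollifier (Theorem~\ref{transl}) and the polynomial kernel bound (Theorem~\ref{kernelBound}); the remaining delicate point, and the place I expect to spend care, is simply justifying the two passages to the limit, that is, the almost-everywhere convergence of the Gaussian mollification and the dominated-convergence interchange, which is exactly where the hypotheses $f,\ \CF_- f \in B(\RR^m)\otimes\cC l_{0,m}$ are used.
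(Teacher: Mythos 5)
Your proposal is correct and follows essentially the same route as the paper's proof: the identity $\phi_t *_{Cl} f = \phi_t * f$ (via Theorem~\ref{transl} applied to the radial Gaussian), the a.e.\ convergence of a subsequence of the classical mollifications, and dominated convergence on the kernel-side expression of $\phi_t *_{Cl} f$ using the bound of Theorem~\ref{kernelBound} together with $\CF_- f \in B(\RR^m)\otimes \cC l_{0,m}$. The only additions are the explicit normalization check and the continuity of $g$, both of which are harmless and consistent with the paper's argument.
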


\begin{proof}
By the above lemma, 
$$
   (\phi_t  *_{Cl} f)(x) = (2\pi)^{-m/2} \int_{\RR^m}  K_-(\xi,x) \CF_- f(\xi) e^{- t |\xi|^2/2} d \xi. 
$$
For each $x$, the bound of the kernel shows that the integrand in the right-hand side is bounded by 
$c(x) (1+ |\xi|)^{(m-2)/2}\CF_- f(\xi)$ with $c(x) = (1+|\ux|)^{(m-2)/2}$, which has a finite 
integral as $\CF_- f \in B(\RR^m) \otimes \cC l_{0,m}$,  so that the right side converges, as $t \to 0$, to $g(x)$ for 
every $x \in \RR^m$ by the dominated convergence theorem. 

On the other hand, since $\phi_t$ is radial, $\phi_t  *_{Cl} f$ coincides with the classical 
convolution, $\phi_t  *_{Cl} f = \phi_t*f$. It is well-known (cf. \cite{SW}) that if
$\phi_t*f$ converges to $f$ in norm, there is a subsequence $t_j$ so that $ \phi_{t_j}*f$ 
converges to $f$ a.e., so that $f(x) = g(x)$ a.e. 
\end{proof}

\end{document}